\DeclareRobustCommand{\gobblefive}[5]{}
\newcommand*{\SkipTocEntry}{\addtocontents{toc}{\gobblefive}}
\numberwithin{equation}{section}
\theoremstyle{plain}
\newtheorem{lemma}{Lemma}[section]
\newtheorem{proposition}[lemma]{Proposition}
\newtheorem{theorem}[lemma]{Theorem}
\newtheorem{corollary}[lemma]{Corollary}
\theoremstyle{definition}
\newtheorem{definition}[lemma]{Definition}
\newtheorem{remark}[lemma]{Remark}
\let\C\relax
\newcommand{\C}{{\mathbb C}}
\newcommand{\R}{{\mathbb R}}
\newcommand{\Hh}{{\mathcal H}}
\newcommand{\Ll}{{\mathcal L}}
\newcommand{\Mm}{{\mathcal M}}
\newcommand{\id}{{\rm id}}
\newcommand{\Om}{{\Omega}}
\newcommand{\om}{{\omega}}
\newcommand{\la}{\langle}
\newcommand{\ra}{\rangle}
\newcommand{\vol}{\mbox{\rm vol}}
\newcommand{\rmspan}{\mbox{\rm span}}
\newcommand{\Sp}{{\text{\rm Spin}(7)}}
\renewcommand{\sp}{{\mathfrak {spin} (7)}}
\renewcommand{\Im}{{ \rm Im \,}}
\newcommand{\Aa}{{\mathcal A}}
\newcommand{\Bb}{{\mathcal B}}
\newcommand{\Ee}{{\mathcal E}}
\newcommand{\Ff}{{\mathcal F}}
\newcommand{\wFf}{{\widehat{\mathcal F}_{\Sp}}} 
\newcommand{\Gg}{{\mathcal G}}
\newcommand{\Pp}{{\mathcal P}}
\newcommand{\Ss}{{\mathcal S}}
\newcommand{\Uu}{{\mathcal U}}
\newcommand{\Vv}{{\mathcal V}}
\newcommand{\Ww}{{\mathcal W}}
\newcommand{\ja}{{\bf j_1}}
\newcommand{\jb}{{\bf j_2}}
\newcommand{\jc}{{\bf j_3}}
\renewcommand{\i}{{\sqrt{-1}}}
\renewcommand{\l}{{\ell}}
\newcommand{\n}{{\nabla}}
\begin{document}


\title[Deformation of $\Sp$-dDT]
{Deformation theory of deformed Donaldson--Thomas connections for 
$\Sp$-manifolds
}

\author{Kotaro Kawai}
\address{Department of Mathematics, Faculty of Science, Gakushuin University, 1-5-1 Mejiro, Toshima-ku, Tokyo 171-8588, Japan}
\email{kkawai@math.gakushuin.ac.jp}

\author{Hikaru Yamamoto}
\address{Department of Mathematics, Faculty of Pure and Applied Science, University of Tsukuba, 1-1-1 Tennodai, Tsukuba, Ibaraki 305-8577, Japan}
\email{hyamamoto@math.tsukuba.ac.jp}

\thanks{The first named author is supported by 
JSPS KAKENHI Grant Number JP17K14181, 
and the second named author is supported by JSPS KAKENHI Grant Number 
JP18K13415 and Osaka City University Advanced Mathematical Institute (MEXT Joint Usage/Research Center on Mathematics and Theoretical Physics)}
\begin{abstract}
A deformed Donaldson--Thomas connection for a 
manifold with a ${\rm Spin}(7)$-structure, 
which we call a ${\rm Spin}(7)$-dDT connection, 
is a Hermitian connection on a Hermitian line bundle $L$ over a manifold with a ${\rm Spin}(7)$-structure 
defined by fully nonlinear PDEs. 
It was first introduced by Lee and Leung 
as a mirror object of a Cayley cycle obtained by the real Fourier--Mukai transform 
and its alternative definition was suggested in our other paper. 
As the name indicates, a ${\rm Spin}(7)$-dDT connection can also be considered as 
an analogue of a Donaldson--Thomas connection (${\rm Spin}(7)$-instanton). 

In this paper, using our definition, 
we show that 
the moduli space $\mathcal{M}_{{\rm Spin}(7)}$ 
of ${\rm Spin}(7)$-dDT connections has similar properties to these objects. 
That is, we show the following for an open subset 
$\mathcal{M}'_{{\rm Spin}(7)} \subset \mathcal{M}_{{\rm Spin}(7)}$. 
(1) 
Deformations of elements of $\mathcal{M}'_{{\rm Spin}(7)}$ 
are controlled by a subcomplex of 
the canonical complex defined by Reyes Carri\'on by introducing 
a new ${\rm Spin}(7)$-structure from the initial ${\rm Spin}(7)$-structure and a ${\rm Spin}(7)$-dDT connection. 
(2)
The expected dimension of $\mathcal{M}'_{{\rm Spin}(7)}$ is finite. 
It is $b^1$, the first Betti number of the base manifold, 
if the initial ${\rm Spin}(7)$-structure is torsion-free. 
(3) Under some mild assumptions, $\mathcal{M}'_{{\rm Spin}(7)}$ is smooth if we perturb the initial ${\rm Spin}(7)$-structure generically. 
(4)
The space $\mathcal{M}'_{{\rm Spin}(7)}$ admits a canonical orientation 
if all deformations are unobstructed.
\end{abstract}

\keywords{mirror symmetry, deformed Donaldson--Thomas, moduli space, deformation theory, special holonomy, calibrated submanifold}
\subjclass[2010]{
Primary: 53C07, 58D27, 58H15 Secondary: 53D37, 53C25, 53C38}
\maketitle

\tableofcontents
\section{Introduction}
A deformed Donaldson--Thomas connection for a manifold with a ${\rm Spin}(7)$-structure, 
which we call a ${\rm Spin}(7)$-dDT connection, 
was first introduced by Lee and Leung \cite[Section 4.2.1]{LL} 
as a mirror object of a Cayley cycle (a Cayley submanifold with an ASD connection over it) 
via the real Fourier--Mukai transform. 
In \cite{KYFM}, we gave the following alternative definition of a $\Sp$-dDT connection 
which seems to be more appropriate by carefully computing the real Fourier--Mukai transform again. 

\begin{definition} \label{def:Spin7dDTMain}
Let $X^8$ be a manifold with a ${\rm Spin}(7)$-structure $\Phi$ 
and $L \to X$ be a smooth complex line bundle with a Hermitian metric $h$.
Denote by $\Om^k_\l \subset \Om^k$ the subspace of the space of $k$-forms 
corresponding to the $\l$-dimensional irreducible representation of $\Sp$. 
Let $\pi^k_\l : \Om^k \rightarrow \Om^k_\l$ be the projection. 
A Hermitian connection $\nabla$ of $(L,h)$ satisfying 
\begin{align}\label{eq:Spin7dDT intro}
\pi^2_{7} \left( F_\nabla + \frac{1}{6} * F_\nabla^3 \right) = 0 
\quad \mbox{and} \quad 
\pi^{4}_{7}(F_{\nabla}^2) &= 0
\end{align}
is called a \emph{deformed Donaldson--Thomas connection 
for a manifold with a ${\rm Spin}(7)$-structure} (a $\Sp$-dDT connection). 
Here, we regard the curvature 2-form $F_\n$ of $\n$ as a $\i \R$-valued closed 2-form on $X$. 
\end{definition}

\SkipTocEntry \subsection*{Motivation}
As mentioned above, a $\Sp$-dDT connection is the mirror object of a Cayley cycle. 
As the name indicates, it can also be considered as 
an analogue of a Donaldson--Thomas connection (${\rm Spin}(7)$-instanton). 
Thus, it is natural to expect  that ${\rm Spin}(7)$-dDT connections 
would have similar properties to these objects. 
In this paper, we especially study whether the moduli space 
$\Mm_\Sp$ of ${\rm Spin}(7)$-dDT connections has
similar properties, focusing on the deformation theory and the orientability. 

It is known that deformations of Cayley submanifolds are obstructed in general by \cite{McLean}. 
It is known that the moduli space of $\Sp$-instantons is a smooth manifold for a generic $\Sp$-structure (see \cite{MS2}) 
and it admits a canonical orientation by \cite{CL, CGJ, MS1}. 
Thus, we expect these properties for $\Mm_\Sp$.


In Proposition \ref{prop:F1toF2}, 
we show that the second equation of \eqref{eq:Spin7dDT intro} follows from the first equation 
when $* F^4_\n/24 \neq 1$, by which we mean that $* F^4_\n/24 - 1$ is nowhere vanishing. 
Thus, if we assume $* F^4_\n/24 \neq 1$, the theory of $\Sp$-dDT connections becomes more tractable. 
For technical reasons explained in Remark \ref{rem:prob}, 
we focus on the moduli space $\Mm'_\Sp \subset \Mm_\Sp$ of such connections and obtain the following.

\SkipTocEntry \subsection*{Main Results and implication}
\begin{theorem}[Theorems \ref{thm:moduli MSpin7}, \ref{thm:moduli generic} 
and Corollary \ref{cor:oriSpin7}] \label{Main1}

Suppose that 
$X^8$ is a compact connected manifold with a $\Sp$-structure $\Phi$ 
and $L \to X$ is a smooth complex line bundle with a Hermitian metric $h$. 
Let $\Mm'_{\Sp}$ be the set of all $\Sp$-dDT connections $\n$ of $L$ 
with $* F^4_\n/24 \neq 1$ divided by the $U(1)$-gauge action. Suppose that $\Mm'_{\Sp} \neq \emptyset$. 

\begin{enumerate}
\item
If $H^2(\#_\nabla) = \{\, 0 \,\}$ for $\nabla \in \Mm'_\Sp$, 
where $H^2(\#_\nabla)$ is the second cohomology of the complex $(\#_\nabla)$ 
for $\nabla$ defined in Subsection \ref{sec:infi deform}, 
the moduli space $\Mm'_\Sp$ is a finite dimensional smooth manifold around $\nabla$. 
If $\Phi$ is torsion-free, its dimension is $b^1$,  where $b^1$ is the first Betti number of $X$.

\item
Let $\n$ be a $\Sp$-dDT connection for $\Phi$ with $* F_\n^4/24 \neq 1$. 
Suppose that there exist $\Sp$-dDT connections $\n_\Psi$
satisfying $*_\Psi F_{\n_\Psi}^4/24 \neq 1$ for every $\Sp$-structure $\Psi$ sufficiently close to $\Phi$, 
where $*_\Psi$ is the Hodge star induced from $\Psi$. 
If 
\begin{enumerate}
\item
$F_\n \neq 0$ on a dense set of $X$, or 
\item
$\n$ is flat and the $\Sp$-structure $\Phi$ is torsion-free, 
\end{enumerate}
then for every generic $\Sp$-structure $\Psi$ close to $\Phi$, 
the subset of elements of 
the moduli space $\mathcal{M}'_{\Sp, \Psi}$ 
of $\Sp$-dDT connections $\n'$ for $\Psi$ with $*_\Psi F^4_{\n'}/24 \neq 1$ 
close to $[\n]$ is a finite dimensional smooth manifold (or empty).

\item
Suppose that $H^2(\#_\nabla) = \{\, 0 \,\}$ for any $[\n] \in \Mm'_{\Sp}$. 
Then, $\Mm'_{\Sp}$ is a manifold and it admits a canonical orientation 
if we choose an orientation of $\det D$, 
where $D$ is a Fredholm operator defined by \eqref{cancpxfordDT3}. 
\end{enumerate}
\end{theorem}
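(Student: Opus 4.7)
The plan is to treat the three parts of Theorem \ref{Main1} separately, all organised around the deformation complex $(\#_\n)$ introduced in the body of the paper.

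For part (1), I would first linearise the $\Sp$-dDT equation \eqref{eq:Spin7dDT intro} at $\n$. Since $\n \in \Mm'_\Sp$ satisfies $*F_\n^4/24 \neq 1$, Proposition \ref{prop:F1toF2} renders the second defining equation redundant, so near $[\n]$ the moduli is cut out by $\pi^2_7(F_\n + \tfrac{1}{6}*F_\n^3) = 0$ alone. Pairing the linearisation of this equation with the Coulomb gauge operator $d^*$ gives a Fredholm elliptic system whose cohomology is precisely $H^*(\#_\n)$, with $H^0$ the infinitesimal stabiliser, $H^1$ the virtual tangent space, and $H^2$ the obstruction. Assuming $H^2(\#_\n) = \{\,0\,\}$, a Kuranishi / implicit function argument yields a smooth local chart for $\Mm'_\Sp$ of dimension $\dim H^1(\#_\n)$. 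When $\Phi$ is torsion-free, $H^0(\#_\n)$ reduces to the constants and the standard type decomposition together with Hodge theory for a $\Sp$-manifold identify $H^1(\#_\n) \cong \Hh^1(X)$, so the dimension becomes $b^1$.

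For part (2), the plan is a Sard--Smale argument on the parameter space of $\Sp$-structures. On $(\Aa/\Gg) \times \Uu$, with $\Uu$ a small neighbourhood of $\Phi$, introduce the universal section
\[
\Ff(\n', \Psi) \;=\; \pi^{2,\Psi}_{7}\bigl( F_{\n'} + \tfrac{1}{6}*_\Psi F_{\n'}^{3} \bigr)
\]
and show the full linearisation $D\Ff_{(\n,\Phi)}$ is surjective. The connection-variation part has cokernel $H^2(\#_\n)$, so the task is to check that $\partial_\Psi \Ff$ hits this cokernel. Under hypothesis (a), pointwise nondegeneracy of $F_\n$ on a dense set lets variations of $*$ and of $\pi^2_7$ produce a pointwise dense family of $\pi^2_7$-valued forms, and a unique continuation argument upgrades density to surjectivity. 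Under hypothesis (b), $F_\n \equiv 0$ makes $\Ff$ purely linear in $\Psi$ and torsion-freeness reduces the problem to a representation-theoretic mapping on $\Om^*(X)$ whose surjectivity can be verified directly. The regular value theorem then yields $H^2(\#_{\n'}) = \{\,0\,\}$ for generic nearby $\Psi$, and part (1) closes the argument. I expect the surjectivity of $\partial_\Psi \Ff$ to be the main obstacle, because disentangling the simultaneous variations of $*_\Psi$ and $\pi^{2,\Psi}_7$ requires a careful pointwise use of $\Sp$-representation theory.

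For part (3), I would orient $\Mm'_\Sp$ via the determinant line bundle of the Fredholm operator $D$ in \eqref{cancpxfordDT3}. Since $H^2(\#_\n) = \{\,0\,\}$ on all of $\Mm'_\Sp$, the general identity
\[
\det D \;=\; \det \ker D \;\otimes\; (\det \mathrm{coker}\, D)^{-1}
\]
together with the constancy of $H^0(\#_\n)$ on the connected base $X$ gives a canonical isomorphism between $\det D$ and $\det T\Mm'_\Sp$, so an orientation of $\det D$ canonically orients $\Mm'_\Sp$. Following the strategy of \cite{CL,CGJ,MS1}, I would connect $D_\n$ by a continuous path of Fredholm operators to a reference operator attached, say, to the trivial connection, and exhibit a canonical orientation of the reference determinant coming from a complex or quaternionic structure on its index; homotopy invariance of $\det$ then spreads this orientation over $\Mm'_\Sp$.
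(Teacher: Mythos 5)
The overall three-part architecture you propose (implicit function theorem, Sard--Smale over the space of $\Sp$-structures, determinant-line homotopy) matches the paper, but the central technical ingredient that makes the whole machine run is absent from your outline. The linearisation of $\Ff_\Sp$ at $\n$ is $\delta_\n\Ff_\Sp(\i b)=\i\,\pi^2_7\bigl(db+*(\tfrac12F_\n^2\wedge db)\bigr)$, a first-order operator whose principal symbol at $\xi$ is $b\mapsto T_F(\xi\wedge b)$ with $T_F(\beta)=\pi^2_7\bigl(\beta-*(\tfrac12F^2\wedge\beta)\bigr)$, $F=-\i F_\n$; the ellipticity of $(\#_\n)$ is therefore \emph{not} automatic, and the statement that pairing with $d^*$ ``gives a Fredholm elliptic system'' is precisely what has to be proved. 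The paper proves it in Theorem~\ref{thm:1+F Spin7 neq} (Appendix~\ref{sec:new Spin7 str}), using the hypothesis $*F_\n^4/24\neq1$ in an essential way: the new $\Sp$-structure $\Phi_\n=(\id_{TX}+(-\i F_\n)^\sharp)^*\Phi$ is introduced and $\delta_\n\Ff_\Sp=P_\n\circ\bigl((\id_{TX}+(-\i F_\n)^\sharp)^{-1}\bigr)^*\circ\pi^2_{7,\n}\circ d$ is established for an invertible bundle map $P_\n$, identifying $(\#_\n)$ (up to a bundle isomorphism and the subspace $\Vv$) with Reyes Carri\'on's elliptic canonical complex $(\flat_{\Phi_\n})$ for $\Phi_\n$. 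This same identification, together with the homotopy $\Phi_\n(s)=(\id_{TX}+s(-\i F_\n)^\sharp)^*\Phi$, gives ${\rm ind}\,D(\flat_{\Phi_\n})={\rm ind}\,D(\flat_\Phi)=-\hat A_2$. Your claim that ``$H^1(\#_\n)\cong\Hh^1(X)$ by standard type decomposition and Hodge theory'' would require $\Phi_\n$, not $\Phi$, to be torsion-free, which is false as soon as $F_\n\neq0$; the paper instead shows $\dim H^1(\#_\n)-\dim H^2(\#_\n)=1+\dim\Hh^2_7-\hat A_2$ and substitutes $\hat A_2=1-b^1+\dim\Hh^2_7$ for torsion-free $\Phi$.

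Smaller points. In part (2) you do not need unique continuation: equations \eqref{eq:L2 on2}--\eqref{eq:L2 on4} are pointwise algebraic, Lemma~\ref{lem:generic pt} forces the cokernel element $\gamma$ to vanish wherever $F_\n\neq0$, and mere continuity of $\gamma$ finishes; under hypothesis (b), \eqref{eq:L2 on2}--\eqref{eq:L2 on4} become vacuous and \eqref{eq:L2 on1} reduces to $d^*\gamma=0$, so Lemma~\ref{lem:vanish 27} already gives $\gamma\in\Hh^2_7\cap\Vv=\{0\}$ from connection variations alone, with no $\Psi$-derivative required. In part (3) the paper does not construct a canonical orientation from a complex or quaternionic structure on the index; it only proves (Proposition~\ref{prop:trivlb}) that the determinant line bundle $\Ll\to\Bb_0$ is trivial, by running the same $s$-homotopy $\Phi_\n(s)$ fibrewise and coherently over all of $\Bb_0$, so that a single chosen orientation of $\det D$ propagates. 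Your proposed path from $D_\n$ to a single reference operator collapses the problem to one fibre of $\Bb_0$; the content of the proposition is that the homotopy trivialises the whole bundle.
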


Note that the complex $(\#_\nabla)$ 
can be regarded as a subcomplex of the canonical complex 
introduced by Reyes Carri\'on \cite{Reyes} 
with respect to a newly introduced $\Sp$-structure from the initial $\Sp$-structure and a $\Sp$-dDT connection. 
See Section \ref{sec:expdim}. 
The $\Sp$-structure $\Phi$ need not to be torsion-free, 
which implies that there are many explicit examples for which we can apply this theorem.


Theorem \ref{Main1} indicates that 
the moduli space $\Mm'_{\Sp}$ is similar to each of that of Cayley submanifolds and that of $\Sp$-instantons. 
A challenging problem in $\Sp$-geometry is 
to define an enumerative invariant of $\Sp$-manifolds 
by counting Cayley submanifolds or $\Sp$-instantons. 
By the similarities of moduli spaces, 
we might hope to define an enumerative invariant by counting $\Sp$-dDT connections 
if we can manage the case $*F^4_\n/24=1$ and find an appropriate compactification of the moduli space.  
This could be a challenging future work. 
It would be nice that the expected dimension of $\Mm'_{\Sp}$ is 0 
if $\Phi$ is torsion-free and $b^1=0$, 
which includes the case that a $\Sp$-manifold $(X^8, \Phi)$ has full holonomy $\Sp$.

\SkipTocEntry \subsection*{Ideas used in the proof}
The moduli space $\Mm'_{\Sp}$ is defined 
as a zero set of the so-called deformation map $\Ff_\Sp$. 
We introduce a new $\Sp$-structure from the initial $\Sp$-structure and a $\Sp$-dDT connection
and describe the linearization of $\Ff_\Sp$ in a ``nice way''. 
Using this, we show that each deformation is controlled by a subcomplex of the canonical complex 
introduced by Reyes Carri\'on \cite{Reyes}. 
The idea is similar to that of \cite{KY}, but we need more sophisticated calculations in the $\Sp$ case. 
We derive some formulas for $\Sp$-geometry in Section \ref{sec:basic} and Appendix \ref{sec:new Spin7 str}, 
which do not seem to appear in the literature.  
These formulas themselves would be interesting and useful for the study of $\Sp$-geometry.

Orienting the moduli space is considered to be important to define enumerative invariants. 
It is because we must count ``with signs" to define enumerative invariants 
and we need a canonical orientation of the moduli space to determine the signs. 
The point in our case is that our new $\Sp$-structures are connected in the space of $\Sp$-structures to the initial $\Sp$-structure 
by construction. 
Investigating this fact in more detail, we see that a family of Fredholm operators describing infinitesimal deformations is 
homotopic to the trivial family in the space of Fredholm operators. 
Then, the homotopy property of vector bundles implies that the determinant line bundle is trivial, and hence, 
the moduli space is orientable.

\SkipTocEntry \subsection*{Organization of this paper}
This paper is organized as follows. 
Section \ref{sec:basic} gives basic identities and some decompositions 
of the spaces of differential forms in $G_2$- and ${\rm Spin}(7)$-geometry that are used in this paper. 
In Section \ref{sec:suggestSpin7dDT}, 
we give some properties of $\Sp$-dDT connections analogous to \cite{KY}. 
Section \ref{sec:defofSpin7dDT} is devoted to the study of the deformation theory of $\Sp$-dDT connections $\n$ 
with $*F_\n^4/24 \neq 1$. We prove Theorems \ref{Main1} here. 
In Appendix \ref{sec:new Spin7 str}, 
we show that the newly induced $\Sp$-structure introduced in Section \ref{sec:defofSpin7dDT} 
is useful to describe the linearization of the deformation map ``nicely". 
Appendix \ref{app:notation} is the list of notation in this paper. 

\section{Basics on $G_2$- and ${\rm Spin}(7)$-geometry} \label{sec:basic}
In this section, 
we collect some basic definitions and equations on $G_2$- and ${\rm Spin}(7)$-geometry 
which we need in the calculations in this paper. 
Some formulas do not seem to appear in the literature and 
these formulas themselves would be interesting and useful for the study of $\Sp$-geometry.

\subsection{The Hodge-$\ast$ operator}
Let $V$ be an $n$-dimensional oriented real vector space with a scalar product $g$. 
By an abuse of notation, we also denote by $g$ the 
induced inner product on $\Lambda^k V^*$ from $g$. 
Let $\ast$ be the Hodge-$\ast$ operator.
The following identities are frequently used throughout this paper. 

For $\alpha, \beta \in \Lambda^k V^*$ and $v \in V$, we have 
\[
\begin{aligned}
\ast^2|_{\Lambda^k V^*} &= (-1)^{k(n-k)} {\rm id}_{\Lambda^k V^*}, & 
g(\ast \alpha, \ast \beta) &= g(\alpha, \beta), \\ 
i(v) \ast \alpha &= (-1)^k \ast (v^\flat \wedge \alpha), & 
\ast( i(v) \alpha) &= (-1)^{k+1} v^\flat \wedge \ast \alpha. 
\end{aligned}
\]



\subsection{Basics on $G_2$-geometry} \label{sec:G2 geometry}
Let $V$ be an oriented $7$-dimensional vector space. A \emph{$G_2$-structure} on $V$ is 
a 3-form $\varphi \in \Lambda^3 V^*$ such that there is a positively oriented basis 
$\{\, e_i \,\}_{i=1}^7$ of $V$ 
with the dual basis $\{\, e^i \,\}_{i=1}^7$ of $V^\ast$ satisfying 
\begin{equation} \label{varphi}
\varphi = e^{123} + e^{145} + e^{167} + e^{246} - e^{257} - e^{347} - e^{356},
\end{equation}
where $e^{i_1 \cdots i_k}$ is short for $e^{i_1} \wedge \cdots \wedge e^{i_k}$. Setting $\vol := e^{1 \cdots 7}$, 
the 3-form $\varphi$ uniquely determines an inner product $g_\varphi$ via 
\begin{equation} \label{eq:form-1def}
g_\varphi(u,v)\; \vol = \dfrac16 i(u) \varphi \wedge i(v) \varphi \wedge \varphi 
\end{equation}
for $u,v \in V$. 
It follows that any oriented basis $\{\, e_i \,\}_{i=1}^7$ for which \eqref{varphi} holds is orthonormal with respect to $g_\varphi$. Thus, the Hodge-dual of $\varphi$ with respect to $g_\varphi$ is given by 
\begin{equation} \label{varphi*}
\ast \varphi = e^{4567} + e^{2367} + e^{2345} + e^{1357} - e^{1346} - e^{1256} - e^{1247}.
\end{equation}
The stabilizer of $\varphi$ is known to be the exceptional $14$-dimensional simple Lie group 
$G_2 \subset {\rm GL}(V)$. The elements of $G_2$ preserve both $g_\varphi$ and $\vol$, that is, 
$G_2 \subset {\rm SO}(V, g_\varphi)$.

We summarize important well-known facts about the decomposition of tensor products of $G_2$-modules into irreducible summands. 
Denote by $V_k$ the $k$-dimensional irreducible $G_2$-module if there is a unique such module. For instance, $V_7$ is the irreducible $7$-dimensional $G_2$-module $V$ from above, 
and $V_7^* \cong V_7$. For its exterior powers, we obtain the decompositions
\begin{equation} \label{eq:DiffForm-V7}
\begin{array}{rlrl}
\Lambda^0 V^* \cong \Lambda^7 V^* \cong V_1, \quad
& \Lambda^2 V^*  \cong \Lambda^5 V^* \cong V_7 \oplus  V_{14},\\[2mm]
\Lambda^1 V^* \cong \Lambda^6 V^* \cong V_7, \quad
& \Lambda^3 V^* \cong \Lambda^4 V^* \cong V_1 \oplus V_7 \oplus V_{27},
\end{array}
\end{equation}
where $\Lambda^k V^* \cong \Lambda^{7-k} V^*$ due to the $G_2$-invariance of the Hodge isomorphism $\ast: \Lambda^k V^* \to \Lambda^{7-k} V^*$. We denote by $\Lambda^k_\l V^* \subset \Lambda^k V^*$ the subspace isomorphic to $V_\l$. 
Let 
\[
\pi^k_\l: \Lambda^k V^* \rightarrow \Lambda^k_\l V^*
\]
be the canonical projection. 
Identities for these spaces we need in this paper are as follows. 
\begin{equation*}
\begin{aligned}
\Lambda^2_7 V^* =& \{\, i(u) \varphi \mid u \in V \,\}
= \{\, \alpha \in \Lambda^2 V^* \mid * (\varphi \wedge \alpha) = 2 \alpha \,\},\\
\Lambda^2_{14} V^* =& \{\, \alpha \in \Lambda^2 V^* \mid \ast \varphi \wedge \alpha = 0\,\} 
= \{\, \alpha \in \Lambda^2 V^* \mid * (\varphi \wedge \alpha) = - \alpha \,\},\\
\Lambda^3_1 V^* =& \R \varphi, \\
\Lambda^3_7 V^* =& \{\, i(u) * \varphi \in \Lambda^3 V^* \mid u \in V \,\}. 
\end{aligned}
\end{equation*}
The following equations are well-known and useful in this paper. 

\begin{lemma} \label{lem:G2 identities}
For any $u \in V$, we have the following identities. 
\[
\begin{aligned}
\varphi \wedge i(u) * \varphi &= -4 * u^{\flat}, 
\\
* \varphi \wedge i(u) \varphi &= 3 * u^{\flat}, \\
\varphi \wedge i(u) \varphi &= 2 * (i(u) \varphi) = 2 u^{\flat} \wedge * \varphi. 
\end{aligned}
\]
\end{lemma}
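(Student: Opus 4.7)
\emph{Plan.} My approach is a representation-theoretic reduction: each identity equates two $G_2$-equivariant linear maps $V \to \Lambda^{\bullet} V^*$, since all data on either side ($\varphi$, $g_\varphi$, the Hodge star, the interior product, and the musical isomorphism) are $G_2$-invariant. By Schur's lemma applied with \eqref{eq:DiffForm-V7}---in which $V_7$ appears with multiplicity exactly one in every $\Lambda^k V^*$ for $k \in \{1, \dots, 6\}$---each identity is determined up to a single scalar, which I fix by evaluating at one basis vector.

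For the first two identities the targets lie in $\Lambda^6 V^* \cong V_7$, so $\dim \mathrm{Hom}_{G_2}(V, \Lambda^6 V^*) = 1$ and is spanned by $u \mapsto \ast u^\flat$. Hence both sides must be scalar multiples of $\ast u^\flat$. To compute the scalars I evaluate at $u = e_1$, using \eqref{varphi} and \eqref{varphi*} to obtain
\[
i(e_1)\varphi = e^{23} + e^{45} + e^{67}, \qquad i(e_1)\ast\varphi = e^{357} - e^{346} - e^{256} - e^{247}.
\]
Expanding the wedge products term by term, only index-disjoint pairs survive, and each surviving contribution is $\pm e^{234567} = \pm \ast e^1$. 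A direct sign count is expected to yield $\varphi \wedge i(e_1)\ast\varphi = -4\ast e^1$ and $\ast\varphi \wedge i(e_1)\varphi = 3\ast e^1$, pinning down the two scalars.

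The third identity is handled without invoking Schur. Because $i(u)\varphi \in \Lambda^2_7 V^*$ and the characterization $\Lambda^2_7 V^* = \{\alpha : \ast(\varphi \wedge \alpha) = 2\alpha\}$ recorded in the excerpt applies, we have $\ast(\varphi \wedge i(u)\varphi) = 2\, i(u)\varphi$; applying $\ast$ and using $\ast\ast|_{\Lambda^5 V^*} = \id$ (since $5\cdot 2$ is even) yields $\varphi \wedge i(u)\varphi = 2\ast(i(u)\varphi)$. The second equality $\ast(i(u)\varphi) = u^\flat \wedge \ast\varphi$ is pure Hodge algebra: the identity $i(u)\ast\alpha = (-1)^k \ast(u^\flat \wedge \alpha)$ from the start of the section, applied with $\alpha = \ast\varphi$ together with $\ast\ast\varphi = \varphi$, gives $i(u)\varphi = \ast(u^\flat \wedge \ast\varphi)$, and a final $\ast$ finishes. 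The only obstacle I anticipate is the bookkeeping of signs in the basis expansion for identities 1 and 2; the conceptual structure is entirely forced by Schur's lemma and $G_2$-equivariance, and no deeper difficulty should arise.
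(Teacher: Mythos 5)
The paper provides no proof of this lemma (it is labeled ``well-known''), so there is nothing to compare against. Your proof is correct. The Schur reduction for the first two identities is sound: $G_2$ preserves $\varphi$, $g_\varphi$, and the volume form, hence the Hodge star and musical isomorphisms, so $u \mapsto \varphi \wedge i(u)*\varphi$ and $u \mapsto *\varphi \wedge i(u)\varphi$ are $G_2$-equivariant maps between copies of the irreducible $V_7$, and the scalar is pinned down by a single evaluation. I checked the sign count you deferred: with $u = e_1$, the surviving index-disjoint products do give $\varphi\wedge i(e_1)*\varphi = -4\, e^{234567}$ and $*\varphi\wedge i(e_1)\varphi = 3\, e^{234567}$, consistent with $*e^1 = e^{234567}$. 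The third identity follows exactly as you say from $i(u)\varphi \in \Lambda^2_7 V^*$, the characterization $*(\varphi\wedge\alpha) = 2\alpha$ on $\Lambda^2_7 V^*$, and $*^2 = \id$ on $\Lambda^5 V^*$; the second equality is the stated Hodge identity with $\alpha = *\varphi$ and $**\varphi = \varphi$. The one cosmetic weakness is the phrase ``is expected to yield''---you should simply carry out the short computation rather than assert its outcome---but the structure of the argument is complete and correct.
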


\begin{definition}
Let $X$ be an oriented 7-manifold. A \emph{$G_2$-structure} on $X$ is a $3$-form $\varphi \in \Om^3$ 
such that at each $p \in X$ there is a positively oriented basis 
$\{\, e_i \,\}_{i=1}^7$ of $T_p X$ such that $\varphi_p \in \Lambda^3 T^*_p X$ is of the form \eqref{varphi}. As noted above, $\varphi$ determines a unique Riemannian metric $g = g_\varphi$ on $X$ by \eqref{eq:form-1def}, 
and the basis $\{\, e_i \,\}_{i=1}^7$ is orthonormal with respect to $g$.
A $G_2$-structure $\varphi$ is called \emph{torsion-free} if 
it is parallel with respect to the Levi-Civita connection of $g=g_\varphi$. 
A manifold with a torsion-free $G_2$-structure is called a \emph{$G_2$-manifold}. 
\end{definition}

A manifold $X$ admits a $G_2$-structure if and only if 
its frame bundle is reduced to a $G_2$-subbundle. 
Hence, considering its associated subbundles, 
we see that 
$\Lambda^* T^* X$ has the same decomposition as in \eqref{eq:DiffForm-V7}. 
The algebraic identities above also hold. 

\subsection{${\rm Spin}(7)$-geometry} \label{sec:Spin7 geometry}
Let $W$ be an $8$-dimensional oriented real vector space. 
A \emph{$\Sp$-structure on $W$} is 
a 4-form $\Phi \in \Lambda^4 W^*$ such that there is a positively oriented basis 
$\{\, e_i \,\}_{i=0}^7$ of $W$ 
with dual basis $\{\, e^i \,\}_{i=0}^7$ of $W^\ast$ satisfying 
\begin{equation}\label{Phi4}
\begin{aligned}
\Phi := & e^{0123} + e^{0145} + e^{0167} + e^{0246} - e^{0257} - e^{0347} - e^{0356}\\
& + e^{4567} + e^{2367} + e^{2345} + e^{1357} - e^{1346} - e^{1256} - e^{1247}, 
\end{aligned}
\end{equation}
where $e^{i_1 \cdots i_k}$ is short for $e^{i_1} \wedge \cdots \wedge e^{i_k}$.
Defining forms 
$\varphi$ and $\ast_7 \varphi$ on $V := \rmspan \{\, e_i \,\}_{i=1}^7 \subset W$ 
as in \eqref{varphi} and \eqref{varphi*}, where $*_7$ stands for the Hodge star operator on $V$, 
we have 
\[
\Phi = e^0 \wedge \varphi + \ast_7 \varphi. 
\]
Note that $\Phi$ is self dual, that is, $*_8 \Phi = \Phi$, where 
$*_8$ is the Hodge star operator on $W$. 
It is known that $\Phi$ uniquely determines an inner product $g_\Phi$ and the volume form 
and the subgroup of ${\rm GL}(W)$ preserving $\Phi$ is isomorphic to $\Sp$. 

Denote by $W_k$ the $k$-dimensional irreducible $\Sp$-module if there is a unique such module. 
For example, $W_8$ is the irreducible $8$-dimensional $\Sp$-module from above, and $W_8^* \cong W_8$. 
The group $\Sp$ acts irreducibly 
on $W_7 \cong \R^7$ as the double cover of ${\rm SO}(7)$. 
For its exterior powers, we obtain the decompositions 
\begin{equation}\label{decom1-L-W8}
\begin{aligned}
\Lambda^0 W^* &\cong \Lambda^8 W^* \cong W_1, \quad
& \Lambda^2 W^*  \cong \Lambda^6 W^* &\cong W_7 \oplus  W_{21},\\
\Lambda^1 W^* &\cong \Lambda^7 W^* \cong W_8, \quad
& \Lambda^3 W^* \cong \Lambda^5 W^* &\cong W_8 \oplus W_{48},\\
\Lambda^4 W^* &\cong W_1 \oplus W_7 \oplus W_{27} \oplus W_{35}
\end{aligned}
\end{equation}
where $\Lambda^k W^* \cong \Lambda^{8-k} W^*$ due to the $\Sp$-invariance of the Hodge isomorphism $\ast_8: \Lambda^k W^* \to \Lambda^{8-k} W^*$. Again, we denote by $\Lambda^k_\l W^* \subset \Lambda^k W^*$ the subspace isomorphic to $W_\l$ in the above notation.

The space $\Lambda^k_7 W^*$ for $k = 2,4,6$ 
is explicitly given as follows. 
For the explicit descriptions of the other irreducible summands, 
see for example \cite[(4.7)]{KLS}. 

\begin{lemma}[{\cite[Lemma 4.2]{KLS}, \cite[Lemma 3.3]{KYFM}}] \label{lem:lambdas}
Let $e^0 \in W^*$ be a unit vector. 
Set $V^* = (\R e^0)^\perp$, the orthogonal complement of $\R e^0$. 
The group $\Sp$ acts irreducibly on $V^*$ as the double cover of ${\rm SO}(7)$, 
and hence, we have the identification $V^* \cong W_7$. 
Then, the following maps are $\Sp$-equivariant isometries. 
\begin{equation} \label{def:lambda}
\lambda^k: V^* \longrightarrow \Lambda^k_7 W^*, \quad 
\begin{array}{ll} 
\lambda^2(\alpha) := \frac{1}{2} \left( e^0 \wedge \alpha + i (\alpha^\sharp) \varphi \right), \\[2mm]
\lambda^4(\alpha) := 
\frac{1}{\sqrt{8}} \left( e^0 \wedge i (\alpha^\sharp) \ast_7 \varphi - \alpha \wedge \varphi \right), \\[2mm]
\lambda^6(\alpha) := \frac{1}{3} \Phi \wedge \lambda^2(\alpha) = \ast_8 \lambda^2(\alpha). 
\end{array}
\end{equation}
Here, $\ast_8$ and $\ast_7$ are the Hodge star operators on $W^*$ and $V^*$, respectively.
\end{lemma}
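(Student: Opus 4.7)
My plan is to verify each of the three claims (image in $\Lambda^k_7 W^*$, isometry, $\Sp$-equivariance) by direct calculation, leveraging the splitting $\Phi = e^0 \wedge \varphi + *_7 \varphi$ and the $G_2$-identities of Lemma \ref{lem:G2 identities}. Since $W_7$ is the unique $7$-dimensional irreducible $\Sp$-module, once a non-zero $G_2$-equivariant linear map $V^* \to \Lambda^k_7 W^*$ is produced (with $G_2 \subset \Sp$ the stabilizer of $e^0$), Schur's lemma automatically upgrades it to an $\Sp$-equivariant isomorphism for the unique $\Sp$-module structure on $V^* \cong W_7$ extending the $G_2$-structure; thus the only analytic content is to check that the image lies in the stated subspace and that the map is an isometry.

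The case $k=2$ is the simplest: use the eigenvalue characterization $\Lambda^2_7 W^* = \{\, \beta \in \Lambda^2 W^* : *_8(\Phi \wedge \beta) = 3 \beta \,\}$ and compute, using Lemma \ref{lem:G2 identities}, that
\[
*_8 \bigl( \Phi \wedge (e^0 \wedge \alpha) \bigr) = i(\alpha^\sharp) \varphi, \qquad
*_8 \bigl( \Phi \wedge i(\alpha^\sharp) \varphi \bigr) = 3\, e^0 \wedge \alpha + 2\, i(\alpha^\sharp) \varphi,
\]
so that $*_8(\Phi \wedge \lambda^2(\alpha)) = 3 \lambda^2(\alpha)$. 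The isometry $|\lambda^2(\alpha)| = |\alpha|$ then reduces to the vanishing of the cross term (one summand has a factor of $e^0$ and the other lives in $\Lambda^2 V^*$) and the standard identity $|i(\alpha^\sharp)\varphi|^2 = 3|\alpha|^2$. The case $k=6$ is immediate: $\lambda^6(\alpha) = *_8 \lambda^2(\alpha)$ and $*_8$ is an $\Sp$-equivariant isometry exchanging $\Lambda^2_7 W^*$ with $\Lambda^6_7 W^*$, while the alternative form $\lambda^6(\alpha) = \frac{1}{3} \Phi \wedge \lambda^2(\alpha)$ follows from the eigenvalue identity just established together with $*_8^2 = \id$ on $6$-forms.

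The main obstacle is $k=4$, since $\Lambda^4 W^*$ splits into four irreducible summands and no single eigenvalue of $*_8(\Phi \wedge \cdot)$ isolates $\Lambda^4_7 W^*$. Here my approach is to first verify the self-duality $*_8 \lambda^4(\alpha) = \lambda^4(\alpha)$, which reduces to the general identity $i(\alpha^\sharp) *_7 \varphi = -*_7(\alpha \wedge \varphi)$ (a special case of the rule $i(u) * \beta = (-1)^k *(u^\flat \wedge \beta)$ recalled at the start of the section). Since $\lambda^4$ is manifestly $G_2$-equivariant by construction, its image is a $7$-dimensional $G_2$-submodule of $\Lambda^4_+ W^* \cong V_1 \oplus V_7 \oplus V_{27}$ isomorphic to $V_7$, hence must coincide with the unique $V_7$-summand, namely $\Lambda^4_7 W^*$. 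The isometry part reduces to the two identities $|i(\alpha^\sharp)*_7 \varphi|^2 = 4|\alpha|^2 = |\alpha \wedge \varphi|^2$, both of which are immediate from \eqref{varphi} and \eqref{varphi*} on an orthonormal frame, combined with the prefactor $1/\sqrt{8}$ and the $e^0$-parity vanishing of the cross term.
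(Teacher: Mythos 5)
The paper does not prove this lemma itself: it is stated with a citation to \cite{KLS} and \cite{KYFM}, so there is no in-paper argument to compare against. Your proposal is a correct self-contained verification, and the computations you sketch all check out against the $G_2$-identities of Lemma~\ref{lem:G2 identities} and the splitting $\Phi = e^0\wedge\varphi + *_7\varphi$: indeed $*_8(\Phi\wedge e^0\wedge\alpha) = i(\alpha^\sharp)\varphi$ and $*_8(\Phi\wedge i(\alpha^\sharp)\varphi) = 3 e^0\wedge\alpha + 2 i(\alpha^\sharp)\varphi$, giving the $k=2$ eigenvalue; $\lambda^4(\alpha)$ is self-dual via $*_7(\alpha\wedge\varphi) = -i(\alpha^\sharp)*_7\varphi$; and the norms $|i(\alpha^\sharp)\varphi|^2 = 3|\alpha|^2$, $|i(\alpha^\sharp)*_7\varphi|^2 = |\alpha\wedge\varphi|^2 = 4|\alpha|^2$ give the isometry. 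For the $k=4$ image, the decisive fact is that $\Lambda^4_+ W^*|_{G_2} \cong \Lambda^3 V^* \cong V_1\oplus V_7\oplus V_{27}$ has a unique $V_7$-summand, which is $\Lambda^4_7 W^*$, so a nonzero $G_2$-equivariant map from $V_7$ must land there. The one place I would tighten the wording is the Schur step upgrading $G_2$- to $\Sp$-equivariance: the clean justification is that $\mathrm{Hom}_{G_2}(W_7,W_7)$ is already one-dimensional (because $W_7|_{G_2} = V_7$ is irreducible), hence it equals $\mathrm{Hom}_{\Sp}(W_7,W_7)$, so every $G_2$-equivariant endomorphism of $W_7$ is automatically $\Sp$-equivariant; your phrasing gestures at this but does not make the multiplicity-one point explicit, and without it the ``upgrade'' is not self-evident.
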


By Lemma \ref{lem:lambdas}, we obtain the following.

\begin{lemma} \label{lem:lambda221}
$$
\Lambda^2_{21} W^* = \{ -e^0 \wedge *_7 (\beta \wedge *_7 \varphi) + \beta 
\mid \beta \in \Lambda^2 V^* \}. 
$$
\end{lemma}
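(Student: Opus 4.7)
The plan is to use the orthogonal decomposition $\Lambda^2 W^* = (\R e^0 \wedge V^*) \oplus \Lambda^2 V^*$ to write a generic 2-form as $\om = e^0 \wedge \alpha + \beta$ with $\alpha \in V^*$ and $\beta \in \Lambda^2 V^*$, and then characterize $\Lambda^2_{21} W^*$ as the orthogonal complement of $\Lambda^2_7 W^*$ inside $\Lambda^2 W^*$. Since Lemma \ref{lem:lambdas} already identifies $\Lambda^2_7 W^* = \lambda^2(V^*)$, the condition $\om \in \Lambda^2_{21} W^*$ becomes $\la \om, \lambda^2(\gamma)\ra = 0$ for every $\gamma \in V^*$, which will translate into an explicit formula relating $\alpha$ and $\beta$.

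First I would expand $\la \om, \lambda^2(\gamma)\ra = \tfrac12 \la e^0 \wedge \alpha + \beta,\, e^0 \wedge \gamma + i(\gamma^\sharp)\varphi \ra$. Two of the four cross terms vanish because $e^0 \wedge V^*$ and $\Lambda^2 V^*$ are orthogonal in $\Lambda^2 W^*$, leaving only $\la \alpha, \gamma\ra_{V^*}$ and $\la \beta, i(\gamma^\sharp)\varphi\ra_{V^*}$. Then I would rewrite the second term using the adjointness of $i(\gamma^\sharp)$ and $\gamma \wedge \cdot$, together with the identity $\la \eta_1, \eta_2\ra_{V^*} \vol_V = \eta_1 \wedge *_7 \eta_2$, to obtain
\[
\la \beta, i(\gamma^\sharp)\varphi\ra_{V^*} \vol_V = \gamma \wedge \beta \wedge *_7 \varphi = \la \gamma, *_7(\beta \wedge *_7 \varphi)\ra_{V^*} \vol_V.
\]
Combining everything, $\la \om, \lambda^2(\gamma)\ra = 0$ for all $\gamma \in V^*$ becomes $\alpha + *_7(\beta \wedge *_7 \varphi) = 0$, i.e., $\alpha = -*_7(\beta \wedge *_7\varphi)$.

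Finally, I would close by a dimension count: the map $\Lambda^2 V^* \to \Lambda^2 W^*$ sending $\beta \mapsto -e^0 \wedge *_7(\beta \wedge *_7\varphi) + \beta$ is injective because its $\Lambda^2 V^*$-component is the identity, so its image is $21$-dimensional, which matches $\dim \Lambda^2_{21} W^*$ in \eqref{decom1-L-W8}. Since the image is contained in $\Lambda^2_{21} W^*$ by the orthogonality computation, equality follows.

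I do not expect a serious obstacle here; the only mildly delicate step is book-keeping the identification $V^* \subset W^*$ and applying the adjointness/Hodge identity on $V$ correctly, taking care that $*_7$ (rather than $*_8$) is used throughout the seven-dimensional computation.
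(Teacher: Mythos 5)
Your proposal is correct and follows essentially the same route as the paper: both proofs characterize $\Lambda^2_{21}W^*$ as the orthogonal complement of $\lambda^2(V^*)$, expand the pairing $\la e^0\wedge\gamma+\beta,\lambda^2(\alpha)\ra$, and convert $\la\beta,i(\alpha^\sharp)\varphi\ra$ into $\la\alpha,*_7(\beta\wedge *_7\varphi)\ra$ via Hodge identities. The closing dimension count is harmless but not needed, since the orthogonality condition you derive is already an equivalence in the variable $\alpha$ (the paper relies on this directly).
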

\begin{proof}
Suppose that $e^0 \wedge \gamma + \beta \in \Lambda^2_{21} W^*$ 
for $\gamma \in V^*$ and $\beta \in \Lambda^2 V^*$. 
Then, for any $\alpha \in V^*$, we have 
$$
0
= \la e^0 \wedge \gamma + \beta, 2 \lambda^2(\alpha) \ra
= \la \gamma, \alpha \ra + \la \beta, i(\alpha^\sharp) \varphi \ra. 
$$
Since 
\begin{align*}
\la \beta, i(\alpha^\sharp) \varphi \ra
&= 
*_7 \left( \beta \wedge *_7 \left( i(\alpha^\sharp) \varphi \right) \right) \\
&=
*_7 \left( \beta \wedge \alpha \wedge *_7 \varphi \right)
=
\la \alpha, *_7 \left( \beta \wedge *_7 \varphi \right) \ra, 
\end{align*}
the proof is completed. 
\end{proof}

\begin{remark}
Since the group $\Sp$ acts irreducibly on $\Lambda^2 V^*$ as the double cover of ${\rm SO}(7)$, 
we have $\Lambda^2 V^* \cong W_{21}$. 
However, the map $\Lambda^2 V^* \ni \beta \mapsto -e^0 \wedge *_7 (\beta \wedge *_7 \varphi) + \beta 
\in \Lambda^2_{21} W^*$ is not $\Sp$-equivariant. 
The $\Sp$-equivariant isomorphism $\mu: \Lambda^2 V^* \rightarrow \Lambda^2_{21} W^*$ 
is given by 
$$
\mu (\beta) = -e^0 \wedge *_7 (\beta \wedge *_7 \varphi) - \beta + *_7 (\varphi \wedge \beta). 
$$
This can be proved by the same method as in the proof of 
Lemma 4.1 of the version 2 of the arXiv version of \cite{KLS} 
and using the action of an element of $\Sp \setminus G_2$. 
\end{remark}

We give some formulas about projections onto  some irreducible summands. 
Denote by 
\begin{align} \label{eq:proj}
\pi^k_\l : \Lambda^k W^* \rightarrow \Lambda^k_\l W^*
\end{align}
the canonical projection. 
When $k=2, 4, 6$ and $\l=7$, 
Lemma \ref{lem:lambdas} implies that 
\begin{equation} \label{eq:Spin7 proj 1}
\pi^k_\l (\alpha^k) = \sum_{\mu=1}^7 \la \alpha^k, \lambda^k (e^\mu) \ra \cdot \lambda^k (e^\mu)
\end{equation}
for $\alpha^k \in \Lambda^k W^*$, 
where $\{\, e^\mu\,\}_{\mu=1}^7$ is an orthonormal basis of $V^*$. 

We give other descriptions of $\pi^k_\l$ for $k=2$. 
Since the map 
$\Lambda^2 W^* \ni \alpha^2 \mapsto *_8 (\Phi \wedge \alpha^2) \in \Lambda^2 W^*$ 
is $\Sp$-equivariant, the simple computation and Schur's lemma give the following: 
\[
\begin{aligned}
\Lambda^2_7 W^* 
=& \{\, \alpha^2 \in \Lambda^2 W^* \mid \alpha^2 \wedge \Phi = 3 *_8 \alpha^2 \,\}, \\
\Lambda^2_{21} W^*
=& 
\{\, \alpha^2 \in \Lambda^2 W^* \mid \alpha^2 \wedge \Phi = - *_8 \alpha^2 \,\}.  
\end{aligned}
\]
Since 
$\alpha^2 = \pi^2_7 (\alpha^2) + \pi^2_{21} (\alpha^2)$ and 
$*_8 (\Phi \wedge \alpha^2) = 3 \pi^2_7 (\alpha^2) - \pi^2_{21} (\alpha^2)$
for a 2-form $\alpha^2 \in \Lambda^2 W^*$, it follows that 
\begin{equation*}
\pi^2_7 (\alpha^2) = \frac{\alpha^2 + *_8 (\Phi \wedge \alpha^2)}{4}, \quad
\pi^2_{21} (\alpha^2) = \frac{3 \alpha^2 - *_8 (\Phi \wedge \alpha^2)}{4}. 
\end{equation*}

Let $S^k W_\l$ be the space of symmetric $k$-tensors on $W_\l$. 
We use the following irreducible decompositions in this paper. 
\begin{align} \label{eq:decomp Spin7}
\Lambda^2 W_7    &= W_{21} = \sp, & 
\Lambda^2 W_{21} &= W_{21} \oplus W_{189}, \\
\nonumber
S^2 W_7 &= W_1 \oplus W_{27}, & 
S^2 W_8 &= W_1 \oplus W_{35}, \\
S^2 W_{21} &= W_1 \oplus W_{27} \oplus W_{35} \oplus W_{168}, & 
W_7 \otimes W_{21} &= W_7 \oplus W_{35} \oplus W_{105}. 
\nonumber
\end{align}

By \eqref{eq:decomp Spin7}, we can deduce some equations 
about the wedge product of two 2-forms. 
Indeed, by \eqref{eq:decomp Spin7} and Schur's lemma, 
we see that 
\begin{align} \label{eq:F2decomp}
\begin{split}
\Lambda^2_7 W^* \wedge \Lambda^2_7 W^* 
&\subset \Lambda^4_1 W^*  \oplus \Lambda^4_{27} W^*, \\
\Lambda^2_7 W^* \wedge \Lambda^2_{21} W^* 
&\subset \Lambda^4_7 W^*  \oplus \Lambda^4_{35} W^*, \\
\Lambda^2_{21} W^* \wedge \Lambda^2_{21} W^* 
&\subset \Lambda^4_1 W^*  \oplus \Lambda^4_{27} W^* \oplus \Lambda^4_{35} W^*. 
\end{split}
\end{align}
In fact, all of these inclusions are equalities. 
See \cite[Proposition 2.1]{MS2}.

We use the following lemmas in Appendix \ref{sec:new Spin7 str}. 
\begin{lemma} \label{lem:2form cong}
For any 2-form $F \in \Lambda^2 W^*$, 
there exists $h \in \Sp$ such that 
\begin{align} \label{eq:2form cong}
h^* F = 2 \lambda^2(\alpha) + 
\mu_1 e^{01} + \mu_2 e^{23} + \mu_3 e^{45} + \mu_4 e^{67}, 
\end{align}
where $\alpha = \alpha_1 e^1 + \alpha_3 e^3 + \alpha_5 e^5 + \alpha_7 e^7$, 
$\sum_{j=1}^4 \mu_j =0$ for 
$\alpha_i, \mu_j \in \R$ 
and $\mu_1 \leq \mu_2 \leq \mu_3 \leq \mu_4$.  
\end{lemma}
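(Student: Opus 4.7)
The plan is to decompose $F$ into its $\Sp$-irreducible pieces under $\Lambda^2 W^* = \Lambda^2_7 W^* \oplus \Lambda^2_{21} W^*$ and normalize each in turn. Write $F = F_7 + F_{21}$. For $F_{21}$, the key observation is that $\Lambda^2_{21} W^* \cong \sp$ as $\Sp$-modules (with the pullback action corresponding, up to inversion, to the adjoint action on the Lie algebra), so I would first identify a Cartan subalgebra of $\sp$ sitting inside $\Lambda^2_{21} W^*$. My claim is that
\[
\mathfrak{t} := \{\,\mu_1 e^{01} + \mu_2 e^{23} + \mu_3 e^{45} + \mu_4 e^{67} : \mu_1 + \mu_2 + \mu_3 + \mu_4 = 0\,\}
\]
works. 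The projection formula $\pi^2_7(\alpha) = (\alpha + *_8(\Phi\wedge\alpha))/4$ applied termwise gives
\[
\pi^2_7(\mu_1 e^{01} + \mu_2 e^{23} + \mu_3 e^{45} + \mu_4 e^{67}) = \tfrac{\mu_1 + \mu_2 + \mu_3 + \mu_4}{4}\,\omega_0
\]
with $\omega_0 := 2\lambda^2(e^1) = e^{01}+e^{23}+e^{45}+e^{67}$, which vanishes exactly when $\sum_i \mu_i = 0$; the four summands commute as infinitesimal rotations in disjoint $2$-planes; and $\dim \mathfrak{t} = 3 = \mathrm{rank}\,\Sp$, so $\mathfrak{t}$ is maximal abelian. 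The Cartan theorem for compact connected Lie groups then yields $h_1 \in \Sp$ with $h_1^* F_{21} \in \mathfrak{t}$. Using that the Weyl group $W(\Sp) \cong W(B_3) \cong S_4 \times \mathbb{Z}/2$, whose $S_4$ factor permutes $(\mu_1, \mu_2, \mu_3, \mu_4)$, I can further arrange $\mu_1 \leq \mu_2 \leq \mu_3 \leq \mu_4$.

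For $F_7$, I would next write $h_1^* F_7 = 2\lambda^2(\beta)$ with $\beta \in V^*$ and act further by the maximal torus $T := \exp(\mathfrak{t}) \subset \Sp$. Since $T$ fixes $\mathfrak{t}$ pointwise under the adjoint action, any $h_2 \in T$ preserves the Cartan form of $h_1^* F_{21}$. Using the explicit formulas for $\lambda^2(e^i)$ from Lemma \ref{lem:lambdas} together with the fact that $T$ acts on $W$ by $R(\theta_1) \oplus R(\theta_2) \oplus R(\theta_3) \oplus R(\theta_4)$ subject to $\sum_i \theta_i = 0$, one checks that
\[
V^* = \mathbb{R}\, e^1 \oplus \mathrm{span}(e^2,e^3) \oplus \mathrm{span}(e^4,e^5) \oplus \mathrm{span}(e^6,e^7)
\]
is the $T$-weight decomposition, with $\mathbb{R}\,e^1$ the $T$-fixed line (reflecting the $T$-invariance of $\lambda^2(e^1) = \omega_0/2$) and the three $2$-planes rotating by three independent angles. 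Choosing $h_2 \in T$ to rotate each $2$-plane so as to kill the $e^2, e^4, e^6$ components of $\beta$ leaves $\beta = \alpha_1 e^1 + \alpha_3 e^3 + \alpha_5 e^5 + \alpha_7 e^7$. Setting $h := h_1 h_2$ then produces \eqref{eq:2form cong}.

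The main technical obstacle is the $T$-weight computation on $V^* \cong W_7$ via $\lambda^2$: the identification is abstract (going through $\Lambda^2_7 W^*$, not via the naive inclusion $V^* \hookrightarrow W^*$), so one must carefully track how the simultaneous rotations interact with the sign patterns in the formulas for $\lambda^2(e^i)$ and with the constraint $\sum_i \theta_i = 0$ characterizing $\mathfrak{t} \subset \so(W)$. A secondary subtlety is exhibiting explicit elements of $\Sp$ realizing the $S_4$ action on the $\mu_i$'s, since naive permutations of the four rotation planes generally fail to preserve $\Phi$ and must be twisted by additional reflections to stay inside $\Sp$.
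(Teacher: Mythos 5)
Your proposal follows essentially the same strategy as the paper: conjugate $F_{21}$ into the explicit Cartan subalgebra $\mathfrak{t}\subset\Lambda^2_{21}W^*\cong\mathfrak{spin}(7)$, use the permutation part of the Weyl group (which the paper realizes via the $\mathrm{SU}(4)$-action, equivalently $W(\mathrm{SU}(4))=S_4\subset W(B_3)$) to order the $\mu_j$, then act by the maximal torus $T\subset\mathrm{SU}(4)\subset\Sp$ to kill the $e^2,e^4,e^6$-components of the $V^*$-parameter of $F_7$. The one place you gesture rather than compute — that the $T$-weights on $\Lambda^2_7W^*\cong V^*$ decompose as the fixed line $\mathbb{R}\lambda^2(e^1)$ plus three $2$-planes rotating by \emph{independent} angles despite the constraint $\sum\theta_j=0$ — is exactly what the paper verifies explicitly via $2(\lambda^2(e^2)+\i\lambda^2(e^3))=f^{12}+\overline{f^{34}}$, etc., and the substitution $\theta_1=(\tau_1+\tau_2+\tau_3)/2$, so your outline is correct and this step is routine.
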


\begin{proof}
By the decomposition (\ref{decom1-L-W8}), set 
\[
F = F_7 + F_{21} = 2 \lambda^2 (\alpha') + F_{21} \in  \Lambda^2_7 W^* \oplus \Lambda^2_{21} W^*, 
\]
where $\alpha' \in V^* = (\R^7)^*$. 
Recall that every element in $\mathfrak{spin}(7)$ is ${\rm Ad}(\Sp)$-conjugate to an element of a Cartan subalgebra. Then, 
there exists $h' \in \Sp$ such that 
\[
(h')^* F_{21} = \mu_1 e^{01} + \mu_2 e^{23} + \mu_3 e^{45} + \mu_4 e^{67} 
\]
for $\mu_j \in \R$ such that $\sum_{j=1}^4 \mu_j =0$. 
Since we can permute $\{ \mu_j \}$ by the ${\rm SU}(4)$-action, 
we may assume that $\mu_1 \leq \mu_2 \leq \mu_3 \leq \mu_4$. 

Now, we show that there exists 
$h^{\prime\prime} \in \Sp$ such that 
$(h^{\prime\prime})^* (h')^* F_{21} = (h')^* F_{21}$ and  
$(h^{\prime\prime})^* (h')^* F_7 = 2 \lambda^2(\alpha)$ for 
$\alpha = \alpha_1 e^1 + \alpha_3 e^3 + \alpha_5 e^5 + \alpha_7 e^7$. 
Then, $h=h' h^{\prime\prime}$ satisfies the desired property.

Denote by $\om=e^{01}+e^{23}+e^{45}+e^{67}$ 
the standard K\"ahler form on $W$. 
Set 
$f^1=e^0+\i e^1, f^2=e^2+\i e^3, f^3=e^4+\i e^5, f^4=e^6+\i e^7$. 
Then, it is straightforward to see that 
\[
\begin{aligned}
2\lambda^2(e^1) &= \om, \\
2\left(\lambda^2(e^2) + \i \lambda^2(e^3) \right)&= f^{12} + \overline{f^{34}}, \\
2\left(\lambda^2(e^4) + \i \lambda^2(e^5) \right)&= f^{13} + \overline{f^{42}}, \\
2\left(\lambda^2(e^6) + \i \lambda^2(e^7) \right)&= f^{14} + \overline{f^{23}}. 
\end{aligned}
\]
Then, 
setting 
$
h^{\prime\prime} = 
{\rm diag} \left( e^{\i \theta_1}, e^{\i \theta_2}, e^{\i \theta_3}, e^{\i \theta_4} \right)
\in {\rm SU}(4) \subset \Sp 
$
for $\theta_j \in \R$ with $e^{\i (\theta_1 + \theta_2 + \theta_3 + \theta_4)} =1$, 
we see that $(h^{\prime\prime})^{*} (h')^* F_{21} = (h')^* F_{21}$, 
$(h^{\prime\prime})^* \lambda^2(e^1) = \lambda^2(e^1)$
and 
\[
\begin{aligned}
(h^{\prime\prime})^* \left( \lambda^2(e^2) + \i \lambda^2(e^3) \right) &= 
e^{\i (\theta_1 + \theta_2)} \left( \lambda^2(e^2) + \i \lambda^2(e^3) \right), \\
(h^{\prime\prime})^* \left( 
\lambda^2(e^4) + \i \lambda^2(e^5) \right) &= 
e^{\i (\theta_1 + \theta_3)} \left( \lambda^2(e^4) + \i \lambda^2(e^5) \right), \\
(h^{\prime\prime})^* \left(
\lambda^2(e^6) + \i \lambda^2(e^7) \right) &= 
e^{\i (\theta_1 + \theta_4)} \left( \lambda^2(e^6) + \i \lambda^2(e^7) \right). 
\end{aligned}
\]
In particular, given $\tau_1, \tau_2, \tau_3 \in \R$, set 
$
\theta_1 = (\tau_1+\tau_2+\tau_3)/2,  
\theta_2 = (\tau_1-\tau_2-\tau_3)/2,
\theta_3 = (-\tau_1+\tau_2-\tau_3)/2,
\theta_4 = (-\tau_1-\tau_2+\tau_3)/2.  
$
Then, $\sum_{j=1}^4 \theta_j =0$ and 
$
\left( e^{\i (\theta_1 + \theta_2)}, e^{\i (\theta_1 + \theta_3)}, 
e^{\i (\theta_1 + \theta_4)} \right)
= \left( e^{\i \tau_1}, e^{\i \tau_2}, e^{\i \tau_3} \right).  
$
This implies that 
$h^{\prime\prime}$ gives rotations on planes 
\[{\rm span} \{\lambda^2(e_2), \lambda^2(e_3) \}, \ {\rm span} \{\lambda^2(e_4), \lambda^2(e_5) \} \ \mbox{ and } \ {\rm span} \{\lambda^2(e_6), \lambda^2(e_7) \}. \]
Then, the proof is completed. 
\end{proof}

\begin{proposition} \label{prop:2form norm}
For any $\beta \in \Lambda^2_7 W^*$ and $\gamma \in \Lambda^2_{21} W^*$, we have 
\begin{align*}
\beta^3 &= \frac{3}{2} |\beta|^2 *_8 \beta, \\
|\beta|^4 &= \frac{2}{3} |\beta^2|^2, \\
|\gamma|^4 &= |\gamma^2|^2 - \frac{1}{3} *_8 \gamma^4, \\
|\beta|^2 |\gamma|^2 &= 2 |\beta \wedge \gamma|^2. 
\end{align*}
\end{proposition}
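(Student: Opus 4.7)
The plan is to verify each identity pointwise by reducing to a canonical form using Lemma~\ref{lem:2form cong}, and then carrying out direct computations in coordinates. All four identities are $\Sp$-invariant scalar equalities in $\beta$ and $\gamma$, so it suffices to check each on a single representative of every relevant $\Sp$-orbit.

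\emph{Identities (1) and (2).} Since $\Lambda^2_7 W^* \cong V^* \cong W_7$ as $\Sp$-modules and $\Sp$ acts on $V^*$ as the double cover of $\mathrm{SO}(7)$, every nonzero $\beta \in \Lambda^2_7 W^*$ is $\Sp$-conjugate to a multiple of $2\lambda^2(e^1) = \omega := e^{01}+e^{23}+e^{45}+e^{67}$. Writing $\omega = \omega_1+\omega_2+\omega_3+\omega_4$ with orthonormal decomposable 2-forms $\omega_i$, one has $|\omega|^2 = 4$, while $\omega^2 = 2\sum_{i<j}\omega_i\omega_j$ is a sum of six orthonormal 4-forms (giving $|\omega^2|^2 = 24$), and $\omega^3 = 6\sum_{i<j<k}\omega_i\omega_j\omega_k$. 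Since each triple product $\omega_i\omega_j\omega_k$ equals $*_8\omega_l$ for the remaining index $l$, this gives $\omega^3 = 6 *_8 \omega$. Identities (1) and (2) then follow by restoring the scaling.

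\emph{Identity (3).} Apply Lemma~\ref{lem:2form cong} to put $\gamma = \sum_{j=1}^4 \mu_j\, e^{2j-2,\,2j-1}$ with $\sum_j \mu_j = 0$. Direct expansion gives $|\gamma|^2 = \sum\mu_j^2$, $|\gamma^2|^2 = 4\sum_{i<j}\mu_i^2\mu_j^2$, and $\gamma^4 = 24\,\mu_1\mu_2\mu_3\mu_4\,\vol$, whence $*_8\gamma^4 = 24\mu_1\mu_2\mu_3\mu_4$. Setting $\sigma_k = e_k(\mu_1,\ldots,\mu_4)$ and using $\sigma_1 = 0$, Newton's identities yield $\sum\mu_j^2 = -2\sigma_2$ and $\sum\mu_j^4 = 2\sigma_2^2 - 4\sigma_4$, so $\sum_{i<j}\mu_i^2\mu_j^2 = \sigma_2^2 + 2\sigma_4$. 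Substituting, $|\gamma^2|^2 - \tfrac13 *_8\gamma^4 = 4\sigma_2^2 = |\gamma|^4$, which is (3).

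\emph{Identity (4).} Apply Lemma~\ref{lem:2form cong} to $\beta + \gamma$, which simultaneously puts $\beta = 2\lambda^2(\alpha)$ with $\alpha = \alpha_1 e^1 + \alpha_3 e^3 + \alpha_5 e^5 + \alpha_7 e^7$ and $\gamma$ in the diagonal form above. Using the explicit formulas for $\lambda^2(e^k)$, $k=1,3,5,7$, coming from $\lambda^2(\alpha) = \tfrac12(e^0\wedge\alpha + i(\alpha^\sharp)\varphi)$, I split $\beta\wedge\gamma = \sum_k \beta_k \wedge \gamma$ with $\beta_k := 2\alpha_k\lambda^2(e^k)$ and compute each $\beta_k\wedge\gamma$ by hand. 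The key observation is that for distinct $k, k' \in \{1,3,5,7\}$, the basis 4-forms appearing in $\beta_k \wedge \gamma$ and $\beta_{k'} \wedge \gamma$ are supported on disjoint index sets in $\{0,\ldots,7\}$, so all cross terms in $|\beta\wedge\gamma|^2$ vanish. For each $k$, a term-by-term check then yields $|\beta_k\wedge\gamma|^2 = 2\alpha_k^2|\gamma|^2$: for $k=1$ this uses $\sum_{i<j}(\mu_i+\mu_j)^2 = 2\sum\mu_j^2$, which follows from $\sum\mu_j = 0$, while for $k=3,5,7$ the resulting eight 4-forms are already pairwise orthonormal so the identity is immediate. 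Summing gives $|\beta\wedge\gamma|^2 = 2\bigl(\sum_k\alpha_k^2\bigr)|\gamma|^2 = \tfrac12|\beta|^2|\gamma|^2$, which is (4). The main obstacle is the bookkeeping in (4); the disjointness of supports is what makes it manageable, by removing what would otherwise be a delicate cross-term cancellation.
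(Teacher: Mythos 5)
Your proof is correct and follows essentially the same route as the paper's: reduce $\beta$ to a multiple of the K\"ahler form and $\beta+\gamma$ to the canonical form via Lemma~\ref{lem:2form cong}, then verify each identity by direct coordinate computation. The minor differences (computing $|\omega^2|^2$ directly rather than using self-duality of $\beta^2$ for the second identity, Newton's identities for the third, and grouping $\beta\wedge\gamma = \sum_k \beta_k\wedge\gamma$ with the disjoint-support observation for the fourth, where the paper instead splits into terms with and without $e^0$) are organizational and amount to the same calculation.
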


\begin{proof}
Set $\beta = 2 \lambda^2(\alpha)$, where 
$\lambda^2$ is defined in \eqref{def:lambda} and $\alpha \in V^*$. 
By the $\Sp$-action, we may assume that 
$\beta = 2 |\alpha| \lambda^2(e^1) = |\alpha| \om$, 
where $\om$ is the standard K\"ahler form on $W$. 
Note that $|\beta|=2 |\alpha|$. 
Then, 
$$
\beta^3=|\alpha|^3 \om^3 = 6 |\alpha|^3 *_8 \om = \frac{3}{2} |\beta|^2 *_8 \beta. 
$$

Since 
$\Lambda^4_1 W^*  \oplus \Lambda^4_7 W^* \oplus \Lambda^4_{27} W^*$ 
and $\Lambda^4_{35} W^*$ are the spaces of 
self dual 4-forms and anti self dual 4-forms, respectively, 
\eqref{eq:F2decomp} implies that $\beta^2$ is self dual. 
Then, $\beta^3 =  (3/2) |\beta|^2 *_8 \beta$ implies $|\beta|^4 = (2/3) |\beta^2|^2$.

Next, we prove the third equation. 
As in Lemma \ref{lem:2form cong}, we may assume that 
\[
\gamma = \mu_1 e^{01} + \mu_2 e^{23} + \mu_3 e^{45} + \mu_4 e^{67} 
\]
for $\mu_j \in \R$ such that $\sum_{j=1}^4 \mu_j =0$. 
Then, we have 
\[
\begin{aligned}
\gamma^2
=2 \left(\mu_1 \mu_2 e^{0123} + \mu_1 \mu_3 e^{0145} + \mu_1 \mu_4 e^{0167} 
+ \mu_2 \mu_3 e^{2345} 
+ \mu_2 \mu_4 e^{2367} + \mu_3 \mu_4 e^{4567} \right). 
\end{aligned}
\]
It is straightforward to obtain 
\[
\begin{aligned}
|\gamma^2|^2-|\gamma|^4
=&
4 (\mu_1^2 \mu_2^2 + \mu_1^2 \mu_3^2 + \mu_1^2 \mu_4^2 
+ \mu_2^2 \mu_3^2 + \mu_2^2 \mu_4^2 + \mu_3^2 \mu_4^2) - (\mu_1^2 + \mu_2^2 + \mu_3^2 + \mu_4^2)^2\\
=&
8 \mu_1 \mu_2 \mu_3 \mu_4
=
\frac{1}{3} *_8 \gamma^4, 
\end{aligned}
\]
where the second equality follows from $\sum_{j=1}^4 \mu_j =0$. 

Finally, we prove the fourth equation. As in Lemma \ref{lem:2form cong}, 
we may assume that 
$\beta = 2 \lambda^2(\alpha)$ and 
$\gamma = \mu_1 e^{01} + \mu_2 e^{23} + \mu_3 e^{45} + \mu_4 e^{67}$, 
where 
$\alpha = \alpha_1 e^1 + \alpha_3 e^3 + \alpha_5 e^5 + \alpha_7 e^7$, 
$\sum_{j=1}^4 \mu_j =0$ for 
$\alpha_i, \mu_j \in \R$. 
Then, since 
\begin{align*}
\beta \wedge \gamma 
=& \left( e^0 \wedge \alpha + i (\alpha^\sharp) \varphi \right) \wedge 
\left( \mu_1 e^{01} + \mu_2 e^{23} + \mu_3 e^{45} + \mu_4 e^{67} \right) \\
=&
e^0 \wedge 
\left( \alpha \wedge \left(\mu_2 e^{23} + \mu_3 e^{45} + \mu_4 e^{67} \right) 
+ \mu_1 e^1 \wedge i (\alpha^\sharp) \varphi 
\right) \\
&+ 
i (\alpha^\sharp) \varphi \wedge \left(\mu_2 e^{23} + \mu_3 e^{45} + \mu_4 e^{67} \right), 
\end{align*}
we have 
$$
|\beta \wedge \gamma|^2 = I_1 + I_2, 
$$
where 
\begin{align*}
I_1=& 
\left| \alpha \wedge \left(\mu_2 e^{23} + \mu_3 e^{45} + \mu_4 e^{67} \right)
+ \mu_1 e^1 \wedge i (\alpha^\sharp) \varphi 
\right|^2, \\
I_2=& \left| i (\alpha^\sharp) \varphi \wedge \left(\mu_2 e^{23} + \mu_3 e^{45} 
+ \mu_4 e^{67} \right) \right|^2.  
\end{align*}
Then, it is straightforward to obtain 
\begin{align*}
I_1 =& 
\alpha_1^2 
\left((\mu_1 + \mu_2)^2 + (\mu_1 + \mu_3)^2 + (\mu_1 + \mu_4)^2 \right)
+
\alpha_3^2 \left(2 \mu_1^2 + \mu_3^2 + \mu_4^2 \right) \\
&+ 
\alpha_5^2 \left(2 \mu_1^2 + \mu_2^2 + \mu_4^2 \right)
+
\alpha_7^2 \left(2 \mu_1^2 + \mu_2^2 + \mu_3^2 \right), \\
I_2 =& 
\alpha_1^2 
\left((\mu_2 + \mu_3)^2 + (\mu_2 + \mu_4)^2 + (\mu_3 + \mu_4)^2 \right)
+
\alpha_3^2 \left(2 \mu_2^2 + \mu_3^2 + \mu_4^2 \right) \\
&+ 
\alpha_5^2 \left(\mu_2^2 + 2 \mu_3^2 + \mu_4^2 \right)
+
\alpha_7^2 \left(\mu_2^2 + \mu_3^2 + 2 \mu_4^2 \right) \\
\end{align*}
and 
$$
|\beta \wedge \gamma|^2 
= I_1 + I_2
= 2|\alpha|^2 \sum_{j=1}^4 \mu_j^2 = \frac{1}{2} |\beta|^2 |\gamma|^2, 
$$
where we use $\sum_{j=1}^4 \mu_j =0$ several times. 
\end{proof}

By the fourth equation of Proposition \ref{prop:2form norm}, 
we obtain the following. This is also proved in \cite[Remark 2.2]{MS2}. 

\begin{corollary} \label{cor:2form norm}
For $\beta \in \Lambda^2_7 W^*$ and $\gamma \in \Lambda^2_{21} W^*$, 
$\beta \wedge \gamma=0$ implies 
$\beta =0$ or $\gamma=0$. 
\end{corollary}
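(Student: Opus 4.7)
The plan is to use the fourth identity of Proposition \ref{prop:2form norm} as a one-line argument. That identity states
\[
|\beta|^2 |\gamma|^2 = 2 |\beta \wedge \gamma|^2
\]
for $\beta \in \Lambda^2_7 W^*$ and $\gamma \in \Lambda^2_{21} W^*$. If I assume $\beta \wedge \gamma = 0$, the right-hand side vanishes, so the product $|\beta|^2 |\gamma|^2$ is zero. Since each factor is a nonnegative real number, one of them must vanish, and because $|\cdot|$ is a norm on the respective space of $2$-forms, this forces $\beta = 0$ or $\gamma = 0$.

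Thus there is essentially no work left beyond invoking the identity; the nontrivial content has already been done in proving Proposition \ref{prop:2form norm} via the normal form supplied by Lemma \ref{lem:2form cong}. I would just write one short paragraph that reads the identity backwards. The only step that one might want to double-check is that the inner product used on $\Lambda^2 W^*$ (and its $\Sp$-invariant subspaces) is positive definite, which is immediate from the fact that it is induced from the positive definite metric $g_\Phi$ on $W$. No further case analysis is needed, and there is no hidden obstacle: the statement is really a direct corollary in the strict sense of the word.
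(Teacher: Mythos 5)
Your proof is correct and is exactly the argument the paper intends: the text immediately preceding Corollary \ref{cor:2form norm} says it follows from the fourth equation of Proposition \ref{prop:2form norm}, which is precisely the identity $|\beta|^2 |\gamma|^2 = 2 |\beta \wedge \gamma|^2$ you invoke. Nothing more to add.
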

\subsection{A manifold with a $\Sp$-structure} \label{sec:Spin7str}

\begin{definition}
Let $X$ be an oriented 8-manifold. A \emph{$\Sp$-structure on $X$} is a 
$4$-form $\Phi \in \Om^4$ such that at each $p \in X$ there is a positively oriented basis 
$\{ e_i \}_{i=0}^7$ of $T_p X$ such that $\Phi_p \in \Lambda^4 T^*_p X$ is of the form (\ref{Phi4}). 
The $4$-form $\Phi$ determines a unique Riemannian metric $g = g_\Phi$ on $X$ 
and the basis $\{ e_i \}_{i=0}^7$ is orthonormal with respect to $g$.

A $\Sp$-structure $\Phi$ is called \emph{torsion-free} if 
it is parallel with respect to the Levi-Civita connection of $g=g_\Phi$. 
\end{definition}

A manifold $X$ admits a $\Sp$-structure if and only if 
its frame bundle is reduced to the $\Sp$-subbundle. 
Hence considering its associated subbundle, we see that 
the $\Lambda^* T^* X$ has the same decomposition as (\ref{decom1-L-W8}).

The following case is important in this paper. 
For any $\sigma \in {\rm GL}(W)$, $\sigma^* \Phi$ defines a $\Sp$-structure on $W$. 
Then, $\sigma^* \Phi$ induces the decomposition 
$\Lambda^k W^* = \oplus_\l \Lambda^k_{\l, \sigma^* \Phi} W^*$ as in \eqref{decom1-L-W8}. 
More explicitly, we have 
$$
\Lambda^k_{\l, \sigma^* \Phi} W^* 
= 
\sigma^* \left( \Lambda^k_\l W^* \right). 
$$
Denote by $\pi^k_{\l, \sigma^* \Phi} : \Lambda^k W^* \rightarrow \Lambda^k_{\l, \sigma^* \Phi} W^*$ 
the projection. Using \eqref{eq:proj}, we can describe this more explicitly as 
\begin{align} \label{eq:proj2}
\pi^k_{\l, \sigma^* \Phi} = \sigma^* \circ \pi^k_\l \circ (\sigma^{-1})^*. 
\end{align}

\subsection{Flows of $\Sp$-structures} \label{sec:flow}

In this subsection, we consider the variation of $\Sp$-structures 
and deduce some formulas for Subsection \ref{sec:moduli generic}. 

First, define a $\Sp$-equivariant map 
$\ja: W^* \otimes W^* \to \Lambda^4 W^*$ by 
\begin{align} \label{eq:j1}
\ja (\alpha \otimes \beta) = \alpha \wedge i(\beta^\sharp) \Phi.  
\end{align}
Denote by $g, S^2 W^*, S^2_0 W^*$ 
the standard inner product on $W=\R^8$, 
the space of symmetric 2-tensors on $W^*$, 
the space of traceless symmetric 2-tensors on $W^*$, respectively. 
Then, we have 
\begin{align*}
W^* \otimes W^* &= S^2 W^* \oplus \Lambda^2 W^*, \\
S^2 W^* &= \R g \oplus S^2_0 W^* \cong W_1\oplus W_{35}, \\
\Lambda^2 W^* &= \Lambda^2_7 W^* \oplus \Lambda^2_{21} W^* 
\cong W_7\oplus W_{21}. 
\end{align*}
In particular, 
we have $0=\ja|_{\Lambda^2_{21} W^*}: \Lambda^2_{21} W^* \to \Lambda^4 W^*$. 
By \cite[Corollary 2.6]{Kar}, 
there are isomorphisms 
\begin{align*}
\ja|_{\R g}&: \R g \to \Lambda^4_1 W^*, \\
\ja|_{S^2_0 W^*}&: S^2_0 W^* \to \Lambda^4_{35} W^*, \\
\ja|_{\Lambda^2_7 W^*}&: \Lambda^2_7 W^* \to \Lambda^4_{7} W^*. 
\end{align*}
For $A \in W^* \otimes W^*$, 
define $A^\sharp \in {\rm End}(W) = W^* \otimes W$ and 
$A^* \in W \otimes W$ by 
\begin{align} \label{eq:Asharp}
g(A^\sharp (u),v) = A(u,v), \qquad A^* (u^\flat, v^\flat) = A(u,v)
\end{align}
for any $u,v \in W$. Note that 
$g^\sharp = \id_W$ and $g^*$ is the induced inner product on $W^*$ from $g$. 

Decompose $A \in W^* \otimes W^*$ as  
$$
A = \frac{1}{8} {\rm tr} (A^\sharp) g + A_0 +A_7 +A_{21} 
\in \R g \oplus S^2_0 W^* \oplus \Lambda^2_7 W^* \oplus \Lambda^2_{21} W^*. 
$$
We also decompose $B \in W^* \otimes W^*$ similarly. 
Then, by \cite[Proposition 2.5]{Kar}, we have 
\begin{align} \label{eq:j1 norm}
\la \ja (A), \ja (B) \ra = \frac{7}{2} {\rm tr} (A^\sharp) {\rm tr} (B^\sharp) 
+4 {\rm tr} (A_0^\sharp B_0^\sharp) 
-16 {\rm tr} (A_7^\sharp B_7^\sharp).  
\end{align}

Let $\Pp$ be the space of $\Sp$-structures on $W$. 
It is known that the tangent space $T_\Phi \Pp$ of $\Pp$ at $\Phi$ is given by 
$$
T_\Phi \Pp 
= \Lambda^4_1 W^* \oplus \Lambda^4_7 W^* \oplus \Lambda^4_{35} W^*
= \ja (S^2 W^* \oplus \Lambda^2_7 W^*). 
$$ 
Let $\{ \Phi_t \}_{t \in (- \epsilon, \epsilon)}$ be 
a path in $\Pp$ such that 
$$
\Phi_0 = \Phi
\quad  \mbox{and} \quad 
\left. \frac{d \Phi_t}{dt} \right|_{t=0} = \ja (A), 
$$
where $A = h + \beta \in S^2 W^* \oplus \Lambda^2_7 W^*$. 
Denote by $g_t, \vol_t$ the inner product and the volume form 
induced from $\Phi_t$, respectively. 
Set $\vol=\vol_0$. 
Define $g^*_t$ by \eqref{eq:Asharp} for $g_t$, 
where the operator $\flat$ is defined for $g$. 
By \cite[Proposition 3.1 and Corollary 3.2]{Kar}, we have 
\begin{align} \label{eq:diff etc}
\left. \frac{d g_t}{dt} \right|_{t=0} =2h, \quad
\left. \frac{d g^*_t}{dt} \right|_{t=0} =-2h^*, \quad
\left. \frac{d \vol_t}{dt} \right|_{t=0} ={\rm tr} (h^\sharp) \vol. 
\end{align}
Using these formula, 
we now derive the formula of the differential of Hodge star $*_t$ induced from $\Phi_t$. 

\begin{lemma} \label{lem:diff Hodge}
For any $1 \leq p \leq 8$ and $h \in S^2 W^*$, define 
$\jb (h): \Lambda^p W^* \to \Lambda^p W^*$ by 
\begin{align} \label{eq:j2}
\jb (h) (\alpha_1 \wedge \cdots \wedge \alpha_p) = 
\sum_{j=1}^p \alpha_1 \wedge \cdots \wedge \alpha_{j-1} \wedge h^*(\alpha_j, \,\cdot\,)^\flat 
\wedge \alpha_{j+1} \wedge \cdots \wedge \alpha_p, 
\end{align}
where $\alpha_1, \cdots, \alpha_p \in W^*$. 
Then, for $\beta \in \Lambda^p W^*$, we have 
$$
\left. \frac{d}{dt} *_t \beta \right|_{t=0} = * \left( -2 \jb (h) (\beta) + {\rm tr}(h^\sharp) \beta \right). 
$$
\end{lemma}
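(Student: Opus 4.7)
The plan is to start from the defining relation
\[
\xi \wedge *_t \beta = g_t^*(\xi, \beta)\, \vol_t,
\]
valid for every test form $\xi \in \Lambda^p W^*$, differentiate at $t = 0$, and then extract $\tfrac{d}{dt}\big|_{t=0} *_t \beta$ via non-degeneracy of the wedge pairing $\Lambda^p W^* \otimes \Lambda^{8-p} W^* \to \Lambda^8 W^*$. After differentiating, the right-hand side splits by Leibniz into
\[
\tfrac{d}{dt}\big|_{t=0} g_t^*(\xi, \beta)\, \vol + g^*(\xi, \beta)\, {\rm tr}(h^\sharp)\, \vol,
\]
the second summand being immediate from \eqref{eq:diff etc} and giving the $+\,{\rm tr}(h^\sharp)\beta$ contribution once we pass back through $\xi \wedge * \gamma = g^*(\xi, \gamma) \vol$.

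The main content is the first-order variation of $g_t^*$ on $\Lambda^p W^*$. First I would introduce $\tilde h : W^* \to W^*$, $\tilde h(\alpha) := h^*(\alpha,\cdot)^\flat$, and record the easy identity $g^*(\tilde h(\alpha), \gamma) = h^*(\alpha, \gamma)$, which together with \eqref{eq:diff etc} yields $\tfrac{d}{dt}|_{t=0} g_t^*(\alpha, \gamma) = -2\, g^*(\alpha, \tilde h(\gamma))$ on $W^*$. On decomposables $\xi = \alpha_1 \wedge \cdots \wedge \alpha_p$ and $\beta = \beta_1 \wedge \cdots \wedge \beta_p$, the induced metric is $g_t^*(\xi, \beta) = \det(g_t^*(\alpha_i, \beta_j))$. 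Differentiating the determinant column-by-column, the $k$-th summand is the determinant of the matrix with column $k$ replaced by $(-2\, g^*(\alpha_i, \tilde h(\beta_k)))_i$, which by multilinearity equals $g^*(\xi, \beta_1 \wedge \cdots \wedge (-2\tilde h(\beta_k)) \wedge \cdots \wedge \beta_p)$. Summing over $k$ and comparing with \eqref{eq:j2} identifies the total as $-2\, g^*(\xi, \jb(h)\beta)$, extended to all of $\Lambda^p W^*$ by bilinearity.

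Combining the two contributions gives
\[
\xi \wedge \tfrac{d}{dt}\big|_{t=0} *_t \beta = g^*\bigl(\xi,\, -2\jb(h)\beta + {\rm tr}(h^\sharp)\beta\bigr)\vol = \xi \wedge *\bigl(-2\jb(h)\beta + {\rm tr}(h^\sharp)\beta\bigr),
\]
and since $\xi \in \Lambda^p W^*$ is arbitrary, non-degeneracy of the wedge pairing yields the claimed formula. The whole argument is essentially a bookkeeping exercise once one observes that the derivative $-2 h^*$ on $W^*$ extends to $\Lambda^p$ as a derivation. The only step requiring care is the determinant expansion and its recognition as the derivation extension $\jb(h)$ of $\tilde h$; this is the sole non-formal obstacle, and it is precisely what motivates the definition \eqref{eq:j2} of $\jb$.
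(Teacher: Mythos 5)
Your proof is correct and follows essentially the same route as the paper's: differentiate the defining relation $\xi \wedge *_t \beta = g_t^*(\xi,\beta)\,\vol_t$, isolate the $\vol_t$ contribution via \eqref{eq:diff etc}, and compute the variation of the induced Gram determinant column-by-column to recognize $-2\jb(h)$. The paper simply specializes the test form $\xi$ to the orthonormal basis elements $e^{i_1\cdots i_p}$ rather than quoting non-degeneracy of the wedge pairing for arbitrary $\xi$, but this is a cosmetic difference.
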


\begin{proof}
Let $\{ e_i \}_{i=0}^7$ be the orthonormal basis of $W$ 
with respect to $g=g_\Phi$. Denote by $\{ e^i \}_{i=0}^7$ its dual. 
Set 
$$
*_t \beta = \frac{1}{p!} \sum_{i_1, \cdots, i_p} \beta_{i_1 \cdots i_p} * e^{i_1 \cdots i_p}, 
$$
where $\beta_{i_1 \cdots i_p} = \la *_t \beta, * e^{i_1 \cdots i_p} \ra$. 
Then, since 
$$
\la e^{i_1 \cdots i_p}, \beta \ra_t \vol_t
=
e^{i_1 \cdots i_p} \wedge *_t \beta = \beta_{i_1 \cdots i_p} \vol, 
$$
we have 
$$
*_t \beta = \frac{1}{p!} \sum_{i_1, \cdots, i_p} 
* \left( \la e^{i_1 \cdots i_p}, \beta \ra_t \vol_t \right)
* e^{i_1 \cdots i_p}. 
$$
Then, by \eqref{eq:diff etc}, it follows that 
\begin{align} \label{eq:diff Hodge1}
\left. \frac{d}{dt} *_t \beta \right|_{t=0} 
= 
\frac{1}{p!} \sum_{i_1, \cdots, i_p} 
\left. \frac{d}{dt}  \la e^{i_1 \cdots i_p}, \beta \ra_t \right|_{t=0} * e^{i_1 \cdots i_p}
+ 
{\rm tr}(h^\sharp) * \beta. 
\end{align}
Without loss of generality, we may 
assume that $\beta = \beta_1 \wedge \cdots \wedge \beta_p$ for $\beta_j \in W^*$. 
Set 
$$
v_{j,t} = {}^t\! \left( g_t^* (e^{i_1}, \beta_j), \cdots, g_t^* (e^{i_p}, \beta_j) \right) \in \R^p, \qquad 
v_j = v_{j,0}.
$$ 
Then, by \eqref{eq:diff etc} again, we obtain  
\begin{align*}
\left. \frac{d}{dt}  \la e^{i_1 \cdots i_p}, \beta \ra_t \right|_{t=0} 
&=
\left. \frac{d}{dt}  \det (v_{1,t} \cdots v_{p,t}) \right|_{t=0} \\
&=
\sum_j \det \left(v_{1} \cdots \left. \frac{d}{dt} v_{j,t} \right|_{t=0}  \cdots v_{p} \right) \\
&=
\sum_j 
\left \la e^{i_1 \cdots i_p}, \beta_1 \wedge \cdots \wedge (-2 h^* (\beta_j, \cdot))^\flat \wedge 
\cdots \wedge \beta_p \right \ra \\
&=
-2 \left \la e^{i_1 \cdots i_p}, \jb (h) (\beta) \right \ra. 
\end{align*}
This together with \eqref{eq:diff Hodge1} implies Lemma \ref{lem:diff Hodge}. 
\end{proof}

We prove the following lemmas 
for the application in Subsection \ref{sec:moduli generic}. 
\begin{lemma} \label{lem:j2ad}
For any $1 \leq p \leq 8$, define an ${\rm O}(8)$-equivariant map 
$\jc: \Lambda^p W^* \otimes \Lambda^p W^* \to S^2 W^*$ by 
\begin{align} \label{eq:j3}
\begin{split}
&\jc \left(\alpha_1 \wedge \cdots \wedge \alpha_p 
\otimes \beta_1 \wedge \cdots \wedge \beta_p \right) \\
=& 
\sum_{j,k=1}^p (-1)^{j+k}
\la 
\alpha_1 \wedge \stackrel{j}{\hat \cdots} \wedge \alpha_p, 
\beta_1 \wedge \stackrel{k}{\hat \cdots}  \wedge \beta_p 
\ra 
( \alpha_j \otimes \beta_k + \beta_k \otimes \alpha_j), 
\end{split}
\end{align}
where we set 
$\alpha_1 \wedge \stackrel{j}{\hat \cdots} \wedge \alpha_p 
= \alpha_1 \wedge \cdots \wedge \alpha_{j-1} \wedge \alpha_{j+1} \wedge \cdots \wedge \alpha_p. 
$
Then, for any $h \in S^2_0 W^*$ and $\alpha, \beta \in \Lambda^p W^*$, we have 
$$
\la \jb (h) (\alpha), \beta \ra = \frac{1}{8} \left \la \ja (h), (\ja \circ \jc) (\alpha \otimes \beta) \right \ra. 
$$
\end{lemma}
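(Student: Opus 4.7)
My plan is to reduce to the case of decomposable $p$-forms, compute both sides directly, and match the results via cofactor expansion of the determinant.

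First, observe that both sides of the desired identity are bilinear (and in fact, with respect to $\alpha,\beta$, are continuations of a symmetric-in-$(\alpha,\beta)$ bilinear form up to the factor involving $\jc$, which itself is symmetric by construction). It therefore suffices to verify the identity on decomposable elements $\alpha=\alpha_1\wedge\cdots\wedge\alpha_p$ and $\beta=\beta_1\wedge\cdots\wedge\beta_p$. Writing $u_j=\alpha_j^\sharp$ and $v_k=\beta_k^\sharp$, set $M_{jk}:=\la \alpha_1\wedge\stackrel{j}{\hat\cdots}\wedge\alpha_p,\ \beta_1\wedge\stackrel{k}{\hat\cdots}\wedge\beta_p\ra$.

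For the left-hand side, I note that $h^*(\alpha_j,\cdot\,)^\flat$ is a $1$-form whose pairing with $\beta_k$ equals $h^*(\alpha_j,\beta_k)=h(u_j,v_k)$. Using the determinant formula $\la\alpha_1\wedge\cdots\wedge\alpha_p,\beta_1\wedge\cdots\wedge\beta_p\ra=\det(\la\alpha_i,\beta_j\ra)$ and expanding the cofactor in the $j$-th row of the perturbed determinant, I obtain
\begin{align*}
\la\jb(h)(\alpha),\beta\ra=\sum_{j,k=1}^p(-1)^{j+k}\,h^*(\alpha_j,\beta_k)\,M_{jk}.
\end{align*}

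For the right-hand side, set $B:=\jc(\alpha\otimes\beta)\in S^2 W^*$. Since $h\in S^2_0 W^*$, applying \eqref{eq:j1 norm} to $A=h$ and $B$ kills the $\Lambda^2_7$, $\Lambda^2_{21}$ contributions (both vanish because $B$ is symmetric and $h$ is symmetric) as well as the trace$\cdot$trace term (because ${\rm tr}(h^\sharp)=0$), leaving
\begin{align*}
\la\ja(h),\ja(B)\ra = 4\,{\rm tr}(h^\sharp B_0^\sharp) = 4\,{\rm tr}(h^\sharp B^\sharp),
\end{align*}
where the last equality again uses tracelessness of $h$ to absorb the pure-trace part of $B$. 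A direct computation from the defining formula \eqref{eq:j3} of $\jc$ gives
${\rm tr}(h^\sharp B^\sharp)=\sum_{j,k}(-1)^{j+k}M_{jk}\bigl(h(u_j,v_k)+h(v_k,u_j)\bigr)=2\sum_{j,k}(-1)^{j+k}M_{jk}h^*(\alpha_j,\beta_k)$, where the factor $2$ comes from the symmetrization $\alpha_j\otimes\beta_k+\beta_k\otimes\alpha_j$ in $\jc$. Combining, the right-hand side becomes $\tfrac{1}{8}\cdot 4\cdot 2\sum_{j,k}(-1)^{j+k}M_{jk}h^*(\alpha_j,\beta_k)$, matching the left-hand side exactly. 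No step is truly difficult; the only place one must be careful is keeping track of the factor of $2$ from the symmetrization in $\jc$ and verifying that both the trace-$g$ part of $B$ and the antisymmetric parts drop out precisely because $h$ is traceless and symmetric.
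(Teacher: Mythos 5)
Your proof is correct and follows essentially the same route as the paper: cofactor expansion of the determinant to rewrite $\la\jb(h)(\alpha),\beta\ra$ as $\sum_{j,k}(-1)^{j+k}M_{jk}h^*(\alpha_j,\beta_k)$, plus an application of \eqref{eq:j1 norm} using ${\rm tr}(h^\sharp)=0$, $h\in S^2_0W^*$, and $\jc(\alpha\otimes\beta)\in S^2W^*$ to reduce $\la\ja(h),\ja(B)\ra$ to $4\,{\rm tr}(h^\sharp B^\sharp)$. The only difference is organizational: the paper isolates the $p=1$ identity $h^*(\alpha,\beta)=\tfrac18\la\ja(h),\ja(\alpha\otimes\beta+\beta\otimes\alpha)\ra$ as a preliminary step and then reduces the general case to it, whereas you fold that computation directly into the evaluation of the right-hand side.
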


\begin{proof}
First, we prove this for $p=1$. 
By the definition of $\jb$ in Lemma \ref{lem:diff Hodge}, we have 
$
\la \jb (h) (\alpha), \beta \ra = h^*(\alpha,\beta). 
$
For $\alpha, \beta \in W^*$, set 
$$
B= \alpha \otimes \beta + \beta \otimes \alpha 
= \frac{1}{8} {\rm tr} (B^\sharp) g + B_0 \in \R g \oplus S^2_0 W^*. 
$$
Since $B^\sharp = \alpha \otimes \beta^\sharp + \beta \otimes \alpha^\sharp$, 
it follows that 
$$
{\rm tr}(h^\sharp B^\sharp) 
= 
{\rm tr}(\alpha \otimes h(\beta^\sharp, \cdot)^\sharp + \beta \otimes h(\alpha^\sharp,  \cdot)^\sharp) 
= 
2 h( \alpha^\sharp, \beta^\sharp)
= 
2 h^* (\alpha, \beta). 
$$
Since $h \in S^2_0 W^*$, we have 
${\rm tr}(h^\sharp B^\sharp) = {\rm tr}(h^\sharp B^\sharp_0). $
Then, by \eqref{eq:j1 norm}, it follows that 
$
{\rm tr}(h^\sharp B^\sharp_0) = \la \ja (h), \ja (B) \ra/4.  
$
Hence, we obtain 
\begin{align} \label{eq:j2ad 1}
h^*(\alpha,\beta) 
= \frac{1}{8} \la \ja (h), \ja (B) \ra
= \frac{1}{8} \la \ja (h), \ja (\alpha \otimes \beta + \beta \otimes \alpha) \ra, 
\end{align}
which implies Lemma \ref{lem:j2ad} for $p=1$. 

Next, suppose that 
$\alpha = \alpha_1 \wedge \cdots \wedge \alpha_p$ and 
$\beta = \beta_1 \wedge \cdots \wedge \beta_p$ for $\alpha_j, \beta_k \in W^*$. 
Then, we have 
\begin{align*}
\la \jb (h) (\alpha), \beta \ra 
=&
\sum_{j=1}^p 
\la \alpha_1 \wedge \cdots \wedge h^*(\alpha_j, \,\cdot\,)^\flat \wedge \cdots \wedge \alpha_p, 
\beta \ra \\
=&
\sum_{j=1}^p (-1)^{j-1}
\la \alpha_1 \wedge \stackrel{j}{\hat \cdots} \wedge \alpha_p, 
i \left( h^*(\alpha_j, \,\cdot\,) \right) \beta \ra \\
=&
\sum_{j,k=1}^p (-1)^{j+k}
h^* (\alpha_j, \beta_k) 
\la 
\alpha_1 \wedge \stackrel{j}{\hat \cdots} \wedge \alpha_p, 
\beta_1 \wedge \stackrel{k}{\hat \cdots}  \wedge \beta_p 
\ra.  
\end{align*}
Then, by \eqref{eq:j2ad 1}, the proof is completed. 
\end{proof}

\begin{lemma} \label{lem:pi35}
For any $\gamma, \gamma' \in \Lambda^2_7 W^*$ and $\delta \in \Lambda^2_{21} W^*$, we have 
\begin{align*}
(\ja \circ \jc) (* \gamma \otimes * \gamma') &=
6 \la \gamma, \gamma' \ra \Phi, \\
(\ja \circ \jc) (* \gamma \otimes * \delta) = 
(\ja \circ \jc) (* \delta \otimes * \gamma) 
&= -4 \pi^4_{35} (\gamma \wedge \delta). 
\end{align*}
\end{lemma}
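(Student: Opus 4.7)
The plan is to prove both identities by reducing each to a one-parameter family of $\Sp$-equivariant maps via Schur's lemma and then fixing the proportionality constants. The key observations are that $\jc$ is symmetric in its two arguments (so it factors through $S^2 \Lambda^p W^*$) and takes values in $S^2 W^* = W_1 \oplus W_{35}$, and that Hodge duality gives $\Lambda^6 W^* \cong \Lambda^2 W^* = W_7 \oplus W_{21}$.

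For the first equation, restrict to $\Lambda^6_7 \otimes \Lambda^6_7$. Its symmetric part is $S^2 W_7 = W_1 \oplus W_{27}$ by \eqref{eq:decomp Spin7}, which shares only $W_1$ with $S^2 W^* = W_1 \oplus W_{35}$. Hence Schur's lemma forces $\jc(*\gamma \otimes *\gamma') = c_0 \la \gamma, \gamma' \ra g$ for a universal constant $c_0$. To pin down $c_0$, I would take the trace of both sides: using cofactor expansion of $\la \alpha, \beta \ra = \det(\la \alpha_j, \beta_k \ra)$ for decomposable $p$-forms, one obtains the general identity ${\rm tr}(\jc(\alpha \otimes \beta)^\sharp) = 2 p \la \alpha, \beta \ra$, which with $p = 6$ gives $12 \la \gamma, \gamma' \ra = 8 c_0 \la \gamma, \gamma' \ra$, so $c_0 = 3/2$. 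Combining with the elementary identity $\ja(g) = \sum_i e^i \wedge i(e_i) \Phi = 4 \Phi$ then gives $(\ja \circ \jc)(*\gamma \otimes *\gamma') = 6 \la \gamma, \gamma' \ra \Phi$.

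For the second equation, restrict to $\Lambda^6_7 \otimes \Lambda^6_{21} \cong W_7 \otimes W_{21} = W_7 \oplus W_{35} \oplus W_{105}$. Only $W_{35}$ is shared with the target $S^2 W^* = W_1 \oplus W_{35}$, so $\jc(*\gamma \otimes *\delta) \in S^2_0 W^*$, and therefore $(\ja \circ \jc)(*\gamma \otimes *\delta) \in \Lambda^4_{35} W^*$ because $\ja$ restricts to an isomorphism $S^2_0 W^* \to \Lambda^4_{35} W^*$. On the other hand, by \eqref{eq:F2decomp}, $\gamma \wedge \delta \in \Lambda^4_7 W^* \oplus \Lambda^4_{35} W^*$, so $\pi^4_{35}(\gamma \wedge \delta) \in \Lambda^4_{35} W^*$. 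Since $W_{35}$ appears with multiplicity one in $W_7 \otimes W_{21}$, the space of $\Sp$-equivariant linear maps $W_7 \otimes W_{21} \to W_{35}$ is one-dimensional, so there is a constant $c_1$ with $(\ja \circ \jc)(*\gamma \otimes *\delta) = c_1 \pi^4_{35}(\gamma \wedge \delta)$; the symmetry of $\jc$ automatically yields the identity with $\gamma$ and $\delta$ interchanged.

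To pin down $c_1 = -4$, I would take the explicit test pair $\gamma = \omega = e^{01} + e^{23} + e^{45} + e^{67} = 2 \lambda^2(e^1) \in \Lambda^2_7 W^*$ and $\delta = e^{01} - e^{23}$; membership $\delta \in \Lambda^2_{21} W^*$ follows from a direct check using $\pi^2_{21}(\alpha^2) = (3 \alpha^2 - *_8(\Phi \wedge \alpha^2))/4$, and a short sign computation shows $\omega \wedge \delta = e^{0145} - e^{2345} + e^{0167} - e^{2367}$ is anti-self-dual, so $\pi^4_{35}(\omega \wedge \delta) = \omega \wedge \delta$. The main bookkeeping hurdle is then the explicit evaluation of $(\ja \circ \jc)(*\omega \otimes *\delta)$ by applying \eqref{eq:j3} to the monomial pairs of $*\omega$ and $*\delta$ and then applying $\ja$ to the resulting element of $S^2_0 W^*$; matching the outcome with $-4 \, \omega \wedge \delta$ fixes the constant and completes the proof.
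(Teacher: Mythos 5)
Your proof is correct and follows the same Schur's-lemma-plus-constant-evaluation strategy as the paper. The one genuine shortcut is your use of the trace identity ${\rm tr}(\jc(\alpha\otimes\beta)^\sharp) = 2p\la\alpha,\beta\ra$ to fix the first constant $c_0 = 3/2$, rather than the paper's explicit computation of $\jc(*\omega \otimes *\omega) = 6g$ term by term; both give $c_0 \cdot \ja(g) = 6\Phi$ and the trace route is a bit slicker. You also restrict to the symmetric part $S^2 W_7 = W_1 \oplus W_{27}$ while the paper uses the full $\Lambda^2_7 \otimes \Lambda^2_7 = W_1 \oplus W_{21} \oplus W_{27}$; this is a cosmetic difference since $\jc$ is manifestly symmetric. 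For the $c_1 = -4$ computation you end up at exactly the paper's test pair $\gamma = \omega$, $\delta = e^{01}-e^{23}$, and the remaining bookkeeping you flag is the same calculation the paper carries out.
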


\begin{proof}
First note that 
$$
{\bf k}: 
\Lambda^2_7 W^* \otimes \Lambda^2_7 W^* \to S^2 W^*, \qquad 
\gamma \otimes \gamma' \mapsto \jc (* \gamma \otimes * \gamma')
$$
is $\Sp$-equivariant. 
Since 
\begin{align} \label{eq:pi35 1}
\Lambda^2_7 W^* \otimes \Lambda^2_7 W^* 
=W_1 \oplus W_{21} \oplus W_{27}
\end{align} 
and 
$S^2 W^* = \R g \oplus S^2_0 W^* = W_1 \oplus W_{35}$ by \eqref{eq:decomp Spin7}, 
the image of ${\bf k}$ is contained in $\R g$. 
Since $\ja: W^* \otimes W^* \to \Lambda^4 W^*$ is also $\Sp$-equivariant 
and $\Lambda^4 W^* = W_1 \oplus W_7 \oplus W_{27} \oplus W_{35}$ by \eqref{decom1-L-W8}, 
it follows that 
the image of $\ja \circ {\bf k}$ is contained in $\Lambda^4_1 W^*$.

By \eqref{eq:pi35 1}, the space of $\Sp$-equivariant linear maps from 
$\Lambda^2_7 W^* \otimes \Lambda^2_7 W^*$ to $\Lambda^4_1 W^*$ 
is 1-dimensional.  
Then, there exists $C \in \R$, 
we have 
$$
(\ja \circ \jc) (* \gamma \otimes * \gamma')
=
C \la \gamma, \gamma' \ra \Phi  
$$
for any $\gamma, \gamma' \in \Lambda^2_7 W^*$. 

Now suppose that $\gamma = \gamma' = e^{01} +e^{23} +e^{45} +e^{67}$. 
Then, 
$$
\la \gamma, \gamma' \ra \Phi = 4 \Phi. 
$$
Since 
$* \gamma = * \gamma' = e^{012345} + e^{012367} + e^{014567} + e^{234567}$, 
it is straightforward to obtain 
\begin{align*}
\jc (* \gamma \otimes * \gamma')
=&
\jc (e^{012345} \otimes e^{012345}) 
+\jc (e^{012367} \otimes e^{012367}) \\
&+\jc (e^{014567} \otimes e^{014567}) 
+ \jc (e^{234567} \otimes e^{234567}) 
\end{align*}
and 
\begin{align*}
\jc (e^{012345} \otimes e^{012345}) &= 2 \sum_{i \neq 6,7} e^i \otimes e^i, &
\jc (e^{012367} \otimes e^{012367}) &= 2 \sum_{i \neq 4,5} e^i \otimes e^i,\\
\jc (e^{014567} \otimes e^{014567}) &= 2 \sum_{i \neq 2,3} e^i \otimes e^i, &
\jc (e^{234567} \otimes e^{234567}) &= 2 \sum_{i \neq 0,1} e^i \otimes e^i. 
\end{align*}
Then, 
$\jc (* \gamma \otimes * \gamma') = 6 \sum_i e^i \otimes e^i = 6 g$ and 
\begin{align*}
(\ja \circ \jc) (* \gamma \otimes * \gamma')
= 24 \Phi.
\end{align*}
Hence, we obtain $C=6$.

Similarly, note that 
$$
{\bf k'}: 
\Lambda^2_7 W^* \otimes \Lambda^2_{21} W^* \to S^2 W^*, \qquad 
\gamma \otimes \delta \mapsto \jc (* \gamma \otimes * \delta)
$$
is $\Sp$-equivariant. 
Since 
\begin{align} \label{eq:pi35 2}
\Lambda^2_7 W^* \otimes \Lambda^2_{21} W^* 
=W_7 \otimes W_{21} = W_7 \oplus W_{35} \oplus W_{105}
\end{align} 
and 
$S^2 W^* = \R g \oplus S^2_0 W^* = W_1 \oplus W_{35}$ by \eqref{eq:decomp Spin7}, 
the image of ${\bf k'}$ is contained in $S^2_0 W^*$. 
Since $\ja: W^* \otimes W^* \to \Lambda^4 W^*$ is also $\Sp$-equivariant 
and $\Lambda^4 W^* = W_1 \oplus W_7 \oplus W_{27} \oplus W_{35}$ by \eqref{decom1-L-W8}, 
it follows that 
the image of $\ja \circ {\bf k'}$ is contained in $\Lambda^4_{35} W^*$.

By \eqref{eq:pi35 2}, the space of $\Sp$-equivariant linear maps from 
$\Lambda^2_7 W^* \otimes \Lambda^2_{21} W^*$ to $\Lambda^4_{35} W^*$ 
is 1-dimensional.  
Then, there exists $C' \in \R$, 
we have 
$$
(\ja \circ \jc) (* \gamma \otimes * \delta)
=
C' \pi^4_{35} (\gamma \wedge \delta) 
$$
for any $\gamma \in \Lambda^2_7 W^*$ and $\delta \in \Lambda^2_{21} W^*$. 

Now suppose that $\gamma = e^{01} +e^{23} +e^{45} +e^{67}$ 
and $\delta = e^{01} -e^{23}$. Then, 
$$
\pi^4_{35} (\gamma \wedge \delta) 
= \gamma \wedge \delta
= (e^{01} -e^{23}) \wedge (e^{45} +e^{67}). 
$$
Since 
$* \gamma = e^{012345} + e^{012367} + e^{014567} + e^{234567}$
and 
$* \delta = - e^{014567} + e^{234567}$, 
it is straightforward to obtain 
\begin{align*}
\jc (* \gamma \otimes * \delta)
=&
- \jc (e^{014567} \otimes e^{014567}) 
+ \jc (e^{234567} \otimes e^{234567}) \\
=&
2 (-e^0 \otimes e^0 - e^1 \otimes e^1 + e^2 \otimes e^2 + e^3 \otimes e^3)
\end{align*}
and 
\begin{align*}
(\ja \circ \jc) (* \gamma \otimes * \delta)
=
4 (-e^{0145}-e^{0167}+e^{2345}+e^{2367}). 
\end{align*}
Hence, we obtain $C'=-4$. 
\end{proof}
\section{Some properties of ${\rm Spin}(7)$-dDT connections} \label{sec:suggestSpin7dDT}
We continue to use the notation (and identities) of Subsection \ref{sec:Spin7 geometry}. 
Let 
$X^8$ be a compact connected 8-manifold with a ${\rm Spin}(7)$-structure $\Phi$ 
and $L \to X$ be a smooth complex line bundle with a Hermitian metric $h$.
Set 
\[
\begin{aligned}
\mathcal{A}_{0}=&\{\,\nabla \mid \mbox{a Hermitian connection of }(L,h) \,\} \\
=& \nabla + \i \Om^1 \cdot \id_L, 
\end{aligned}
 \]
where $\nabla \in \Aa_{0}$ is any fixed connection. 
We regard the curvature 2-form $F_\n$ of $\n$ as a $\i \R$-valued closed 2-form on $X$. 

Define maps 
$\Ff^1_{{\rm Spin}(7)}:\Aa_{0} \rightarrow \i \Om^{2}_7$ and 
$\Ff^2_{{\rm Spin}(7)}:\Aa_{0} \rightarrow \Om^{4}_7$ by 
\begin{equation*}
\begin{aligned}
\Ff^{1}_{{\rm Spin}(7)}(\nabla)
&= 
\pi^2_{7} \left( F_\nabla + \frac{1}{6} * F_\nabla^3 \right) \\
&= 
\frac{1}{4}
\left(
F_\nabla + \frac{1}{6} * F_\nabla^3 + 
* \left( \left( F_\nabla + \frac{1}{6} * F_\nabla^3  \right) \wedge \Phi \right) 
\right), \\
\Ff^{2}_{{\rm Spin}(7)}(\nabla)
&=
\pi^{4}_{7}(F_{\nabla}^2). 
\end{aligned}
\end{equation*}

\begin{definition}\label{def:Spin7dDT}
A Hermitian connection $\nabla$ of $(L,h)$ satisfying 
$$
\Ff^{1}_{{\rm Spin}(7)}(\nabla)=0 \quad \mbox{and} \quad \Ff^{2}_{{\rm Spin}(7)}(\nabla)=0 
$$
is called a \emph{deformed Donaldson--Thomas connection}. 
For the rest of this paper, 
we call this a $\Sp$-dDT connection for short. 
\end{definition}

\begin{remark}
Though the dDT connection for a ${\rm Spin}(7)$-manifold was first introduced in \cite[Section 4.2.1]{LL} 
as a Hermitian connection $\nabla$ satisfying $*F_\nabla + F\wedge \Phi+F_\nabla^3/6=0$, 
we use the definition above introduced in \cite[Definition 1.3]{KYFM}. 
\end{remark}

As a generalization of the last statement of \cite[Chapter I\hspace{-.1em}V,Theorem 2.20]{HL}, 
we can show the following. 
The proof is given in Proposition \ref{prop:1+F 00 Spin7}.
Thus, if we assume $* F^4_\n/24 \neq 1$, the theory of $\Sp$-dDT connections 
becomes more tractable. 

\begin{proposition} \label{prop:F1toF2}
For $\n \in \Aa_0$ satisfying $* F_\n^4/24 \neq 1$, 
$\Ff^1_{{\rm Spin}(7)} (\n) = 0$
implies 
$\Ff^2_{{\rm Spin}(7)} (\n) = 0$. 
\end{proposition}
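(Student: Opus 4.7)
The statement is pointwise and purely algebraic, so I would work at a point $p \in X$, treat $F = F_\n|_p$ as an element of $\Lambda^2 W^*$ with $W = T_p X$, and prove the implication for a single 2-form. Decomposing $F = \beta + \gamma$ with $\beta = \pi^2_7(F)$ and $\gamma = \pi^2_{21}(F)$, the inclusions in \eqref{eq:F2decomp} give
\[
\pi^4_7(F^2) = 2 \pi^4_7(\beta \wedge \gamma),
\]
so the goal reduces to showing $\pi^4_7(\beta \wedge \gamma) = 0$.

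My plan is to establish a pointwise algebraic identity of the form
\[
\bigl(1 - \tfrac{1}{24} * F^4 \bigr)\, \pi^4_7(F^2) \;=\; M_F\bigl( \pi^2_7 \bigl(F + \tfrac{1}{6} * F^3\bigr) \bigr),
\]
where $M_F \colon \Lambda^2_7 W^* \to \Lambda^4_7 W^*$ is a linear map that depends algebraically on $F$. Such an identity immediately implies the proposition, since under the hypothesis $\Ff^1_{\Sp}(\n) = 0$ the right-hand side vanishes, while the scalar factor $1 - * F^4/24$ is nonzero by assumption, forcing $\pi^4_7(F^2) = 0$.

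To find and verify such an identity, I would exploit $\Sp$-equivariance together with Lemma \ref{lem:2form cong}. After applying some $h \in \Sp$, we may assume
\[
F = 2\lambda^2(\alpha) + \mu_1 e^{01} + \mu_2 e^{23} + \mu_3 e^{45} + \mu_4 e^{67},
\]
with $\alpha = \alpha_1 e^1 + \alpha_3 e^3 + \alpha_5 e^5 + \alpha_7 e^7$ and $\sum_{j=1}^4 \mu_j = 0$. Using Proposition \ref{prop:2form norm} (in particular $\beta^3 = \frac{3}{2}|\beta|^2 * \beta$) and Lemma \ref{lem:G2 identities}, I would explicitly expand $F^2$, $F^3$ and $F^4$; extract $* F^3$ and the scalar $* F^4$; and compute the seven scalar components $\la \pi^2_7(F + \frac{1}{6} * F^3), \lambda^2(e^\mu)\ra$ and $\la \pi^4_7(F^2), \lambda^4(e^\mu)\ra$ for $\mu = 1,\ldots,7$ using \eqref{eq:Spin7 proj 1}. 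Comparing these as polynomials in the eight variables $\alpha_i, \mu_j$ should reveal the desired identity, and in particular the factor $1 - * F^4/24$.

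The main obstacle is the explicit polynomial bookkeeping. Representation theory provides useful guidance: both $(\lambda^2)^{-1} \pi^2_7(F + \frac{1}{6} * F^3)$ and $(\lambda^4)^{-1} \pi^4_7(F^2)$ land in the same irreducible $\Sp$-module $W_7 \cong V^*$, so $M_F$ must be $\Sp$-equivariant in $F$, and any scalar multiplier between them must be an $\Sp$-invariant polynomial in $F$, hence expressible in terms of $* F^4, |F|^2$, and similar invariants. This strongly constrains the possible form of the identity and provides a useful sanity check against the direct coordinate computation.
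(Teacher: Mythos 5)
Your reduction to a pointwise algebraic statement and the use of Lemma \ref{lem:2form cong} to normalize $F$ match the paper's setup, but after that you diverge completely from the paper's argument, and the divergence rests on an unverified claim. You posit a universal polynomial identity
$\bigl(1 - \tfrac{1}{24}*F^4\bigr)\,\pi^4_7(F^2) = M_F\bigl(\pi^2_7(F + \tfrac{1}{6}*F^3)\bigr)$
with $M_F$ linear in its argument and algebraic in $F$. This is the entire content of your proof — if it exists, you are done — but you have not established it, and it is not at all obvious that it holds. Vanishing of $(1-\tfrac{1}{24}*F^4)\pi^4_7(F^2)$ on the real zero locus of the seven components of $\pi^2_7(F - \tfrac{1}{6}*F^3)$ (which is what the proposition asserts) does not imply membership of that polynomial in the ideal they generate with linear coefficients; over $\R$ one does not even get a Nullstellensatz, and the paper's own analysis shows this zero locus is a stratified union of more than ten distinct families (Lemma \ref{lem:sol F1}), which is exactly the sort of geometry for which naive ideal membership tends to fail.

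The paper instead proves Proposition \ref{prop:1+F 00 Spin7} by the pedestrian route you were hoping to avoid: it writes out the dDT equation $\pi^2_7(F - \tfrac{1}{6}*F^3)=0$ explicitly in the normal form (Lemma \ref{lem:computation}(2)), solves it completely (Lemma \ref{lem:sol F1}, cases (1)--(10)), and then checks case by case (Table \ref{table1}) that either $1-\tfrac{1}{24}*F^4=0$ or $\pi^4_7(F^2)=0$. Several cases (e.g.\ (2), (3)-(b), (5)--(9)) have $\pi^4_7(F^2)\neq 0$ and $1-\tfrac{1}{24}*F^4=0$, so the two factors genuinely trade off. A warning sign for your guessed multiplier: the invariant scalar that actually falls out of the paper's computation of $\ker T_F$ is $1 - \tfrac{1}{2}\langle F^2,\Phi\rangle + \tfrac{1}{24}*F^4$ (Lemma \ref{lem:nonzero} and the remark after Theorem \ref{thm:1+F Spin7 neq}), not $1-\tfrac{1}{24}*F^4$. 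Your representation-theoretic reasoning correctly constrains the shape of any candidate identity but says nothing about whether one exists; as written, the key step is a conjecture, and the proposal is not a proof until you either exhibit $M_F$ explicitly or fall back on the paper's classification.
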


The following is an analogue of \cite{KY}. 
Set $(F_\nabla)_j = \pi^2_j (F_\n) \in \i \Om^2_j$ for $j=7,21$. 
If $\n$ is a $\Sp$-dDT connection, 
the norms of $(F_\nabla)_7$ and $(F_\nabla)_{21}$ satisfy the following relation. 
Though we do not use Proposition \ref{prop:dDT norm} in this paper, 
we believe that this decomposition will be helpful 
for the further research of $\Sp$-dDT connections. 
The statement follows from Corollary \ref{cor:dDT estimate}.

\begin{proposition} \label{prop:dDT norm}
Suppose that $\n$ is a $\Sp$-dDT connection. 
\begin{enumerate}
\item
If $(F_\nabla)_{21}=0$, we have $(F_\nabla)_7=0$ or $|(F_\nabla)_7| = 2$. 
\item
We have 
\[
|(F_\nabla)_7| \leq 2 \sqrt{\frac{|(F_\nabla)_{21}|^2 + 4}{3}} 
\cos \left( \frac{1}{3} \arccos \left( \frac{|(F_\nabla)_{21}|^3}{(|(F_\nabla)_{21}|^2+4)^{3/2}} \right) \right). 
\]
\end{enumerate}
\end{proposition}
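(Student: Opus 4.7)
The plan is to reduce both parts of the proposition to a pointwise polynomial/cubic inequality in $b := |(F_\nabla)_7|$ and $c := |(F_\nabla)_{21}|$, obtained by wedging a rewritten version of the first dDT equation with $\beta := \pi^2_7(F_\nabla)$ and applying the identities of Proposition~\ref{prop:2form norm}. Write $F_\nabla = \beta + \gamma$ (pointwise) with $\gamma := \pi^2_{21}(F_\nabla) \in \Lambda^2_{21}$, treating $F_\nabla$ as an $\i\R$-valued 2-form.

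First, I would exploit the second dDT equation: because $\beta^2 \in \Lambda^4_1 \oplus \Lambda^4_{27}$ and $\gamma^2 \in \Lambda^4_1 \oplus \Lambda^4_{27} \oplus \Lambda^4_{35}$ by \eqref{eq:F2decomp}, $\pi^4_7(F_\nabla^2) = 0$ forces $\beta \wedge \gamma \in \Lambda^4_{35}$ (anti-self-dual); combining with $|\beta \wedge \gamma|^2 = \tfrac{1}{2} b^2 c^2$ (fourth identity of Proposition~\ref{prop:2form norm}) gives $(\beta \wedge \gamma)^2 = -\tfrac{1}{2} b^2 c^2 \vol$. Next I would rewrite the first dDT equation as $\pi^2_7(*F_\nabla^3) = -6\beta$ and wedge pointwise with $\beta$. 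Since $\alpha \wedge \eta = \langle \alpha, *\eta \rangle \vol$ and $\beta \in \Lambda^2_7$, the left side becomes $6 b^2 \vol$; expanding $F_\nabla^3 = \beta^3 + 3\beta^2\gamma + 3\beta\gamma^2 + \gamma^3$ and evaluating each wedge with $\beta$ via Proposition~\ref{prop:2form norm} (giving $\beta^4 = \tfrac{3}{2} b^4 \vol$, $\beta^3 \wedge \gamma = \tfrac{3}{2} b^2 \langle \beta, \gamma \rangle \vol = 0$, and $\beta^2 \wedge \gamma^2 = (\beta \wedge \gamma)^2 = -\tfrac{1}{2} b^2 c^2 \vol$) isolates the remaining term and produces the master identity
\[
\beta \wedge \gamma^3 \;=\; \tfrac{3}{2}\, b^2 \, \bigl( 4 - b^2 + c^2 \bigr)\, \vol.
\]

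Part (1) is then immediate: $\gamma = 0$ gives $b^2 (4 - b^2) = 0$, so $b = 0$ or $b = 2$. For part (2), I would bound $|\beta \wedge \gamma^3|$ using Cauchy--Schwarz, $|\beta \wedge \gamma^3| \leq b \, |\gamma^3| \, \vol$. To control $|\gamma^3|$, I would apply Lemma~\ref{lem:2form cong} to bring $\gamma$ to diagonal form $\gamma = \sum_{j=1}^4 \mu_j e^{2(j-1),\,2j-1}$ with $\sum_j \mu_j = 0$ and $c^2 = 2 \sum_j \mu_j^2$; a direct expansion gives $|\gamma^3|^2 = 36\, \sigma_3(\mu_1^2, \mu_2^2, \mu_3^2, \mu_4^2)$, and Maclaurin's inequality on the non-negative reals $\mu_j^2$ then yields $|\gamma^3|^2 \leq \tfrac{9}{32} c^6 \leq \tfrac{1}{3} c^6$, i.e., $|\gamma^3| \leq c^3 / \sqrt{3}$. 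Substituting into the master identity produces, in the range $b^2 \geq c^2 + 4$ that matters for an upper bound on $b$, the cubic inequality $b^3 - (c^2 + 4) b \leq \tfrac{2 c^3}{3 \sqrt{3}}$.

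The final step is to apply the trigonometric form of Cardano's formula: setting $3p = c^2 + 4$ and $q = \tfrac{2 c^3}{3 \sqrt{3}}$, the discriminant $p^3 - (q/2)^2 \geq 0$ follows from $(c^2+4)^3 \geq c^6$, so the cubic $x^3 - 3 p x = q$ has three real roots, the largest being $2 \sqrt{p} \cos\bigl(\tfrac{1}{3} \arccos(q / (2 p^{3/2}))\bigr) = 2 \sqrt{(c^2+4)/3} \cos\bigl(\tfrac{1}{3} \arccos(c^3 / (c^2+4)^{3/2})\bigr)$, matching the stated bound. The hardest part will be the master-identity calculation itself --- in particular, the sign-bookkeeping from $\i^3 = -\i$ in $F_\nabla^3$ and the careful term-by-term reduction via Proposition~\ref{prop:2form norm} --- together with verifying that Maclaurin's inequality produces the required $|\gamma^3| \leq c^3 / \sqrt{3}$; the trigonometric solution of the cubic and part~(1) are then routine.
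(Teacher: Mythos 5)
Your master identity $\beta\wedge\gamma^3 = \tfrac{3}{2}b^2(4-b^2+c^2)\,\mathrm{vol}$ is correct, and it is a genuinely nicer, coordinate-free route to the algebraic constraint than the paper's: the paper instead normalizes $F$ to the form of Corollary~\ref{cor:sol F1 F2} and reads off \eqref{eq:solsol}, whereas you obtain the equivalent relation directly by wedging the first dDT equation with $\beta$ and invoking Proposition~\ref{prop:2form norm} and the anti-self-duality of $\beta\wedge\gamma$ coming from $\pi^4_7(F_\nabla^2)=0$. Part (1) then follows exactly as you say.

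However, the key step of Part (2) is broken: the claimed bound $|\gamma^3|\le c^3/\sqrt{3}$ is \emph{false}. In the normal form $\gamma = \sum_j \mu_j e^{2(j-1),2j-1}$ of Lemma~\ref{lem:2form cong} one has $c^2 = |\gamma|^2 = \sum_j\mu_j^2$, not $2\sum_j\mu_j^2$; with the correct normalization Maclaurin only gives $|\gamma^3|^2 = 36\,\sigma_3(\mu_1^2,\ldots,\mu_4^2)\le \tfrac{9}{4}c^6$, far above $\tfrac13 c^6$. A concrete counterexample: $\gamma = e^{01}+e^{23}-e^{45}-e^{67}\in\Lambda^2_{21}$ has $c=2$ and $|\gamma^3|=12 > 8/\sqrt{3}\approx 4.62$. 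The deeper issue is that the Cauchy--Schwarz step $|\beta\wedge\gamma^3|\le b\,|\gamma^3|$ is too lossy: since $\beta\in\Lambda^2_7$, you actually have $\beta\wedge\gamma^3 = \langle\beta,\pi^2_7(*\gamma^3)\rangle\,\mathrm{vol}$, so the relevant quantity is $|\pi^2_7(*\gamma^3)| = 3\left|\sum_{i<j<k}\mu_i\mu_j\mu_k\right|$, not $|\gamma^3| = 6\sqrt{\sigma_3(\mu^2)}$; for the counterexample above the former is $0$ while the latter is $12$. The estimate you actually need is $3\left|\sum_{i<j<k}\mu_i\mu_j\mu_k\right|\le c^3/\sqrt{3}$, which is precisely Lemma~\ref{lem:dDT estimate sub}; its proof essentially uses the constraint $\sum_j\mu_j=0$ (via the substitution $\nu_1=\mu_2+\mu_3$, etc.), and plain Maclaurin/AM--GM on the $\mu_j^2$ does not see that constraint. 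Once you replace your bound on $|\gamma^3|$ by this sharper projected bound, the cubic inequality $b^3-(c^2+4)b\le \tfrac{2c^3}{3\sqrt{3}}$ follows, and your trigonometric Cardano step is correct and recovers the stated bound.
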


Now, we define the moduli space of $\Sp$-dDT connections. 
Let $\Gg_U$
be the group of unitary gauge transformations of $(L,h)$. 
Precisely, 
\[
\Gg_U= \{\, f \cdot \id_L \mid f: X \rightarrow \C, \ |f|=1 \,\} \cong C^\infty(X, S^1).
\]
The action $\Gg_U \times \Aa_0 \rightarrow \Aa_0$ is defined by 
$(\lambda, \nabla) \mapsto \lambda^{-1} \circ \nabla \circ \lambda$.
When $\lambda=f \cdot \id_L$, we have 
\begin{equation}\label{finvdf}
\lambda^{-1} \circ \nabla \circ \lambda = \nabla + f^{-1}df \cdot \id_L. 
\end{equation}
Thus, the $\Gg_U$-orbit through $\n \in \Aa_0$ is given by 
\begin{align} \label{eq:Gu orbit Spin7}
\nabla + \{ f^{-1} d f \in \i \Om^1 \mid f: X \rightarrow \C, \ |f|=1 \} \cdot \id_L. 
\end{align}
Since the curvature 2-form $F_\nabla$ is invariant under the action of $\Gg_U$, 
the maps $\Ff^1_\Sp$ and $\Ff^2_\Sp$ reduces to 
\[
\left( \underline{\Ff^1_\Sp},\underline{\Ff^2_\Sp} \right): 
\Aa_{0}/\Gg_U \rightarrow \i \Om^6_7 \oplus \Om^4_7. 
\]

\begin{definition}
The {\em moduli space} $\Mm$ of deformed Donaldson-Thomas connections of $(L,h)$ 
is given by 
\begin{align*}
\Mm_\Sp 
=& \left( \underline{\Ff^1_\Sp} \right)^{-1}(0) \cap 
\left(\underline{\Ff^2_\Sp} \right)^{-1}(0) \\
=& \left( (\Ff^1_\Sp)^{-1}(0) \cap (\Ff^2_\Sp)^{-1}(0) \right)/\Gg_U. 
\end{align*}
\end{definition}

\begin{remark} \label{rem:metric B0}
As \cite{KY}, 
there is a metric on $\Bb_0 = \Aa_0 /\Gg_U$ 
whose induced topology agrees with the $C^\infty$ (quotient) topology on $\Bb_0$. 
In particular, $\Bb_0$ (with the $C^\infty$ topology) is paracompact and Hausdorff. 
Similarly, $\Mm_{\Sp}$ (with the induced topology from $\Bb_0$)
is also metrizable, and hence, it is paracompact and Hausdorff. 
\end{remark}

By Proposition \ref{prop:F1toF2}, 
if we assume $* F^4_\n/24 \neq 1$, 
we only have to consider $\Ff^1_{{\rm Spin}(7)} (\n) = 0$, 
which makes the deformation theory of $\Sp$-dDT connections more tractable. 
In Section \ref{sec:defofSpin7dDT}, we focus on this case and study their deformations. 
The technical difficulty in the general case is explained in Remark \ref{rem:prob}.

\section{Deformations of ${\rm Spin}(7)$-dDT connections $\n$ with $* F_\n^4/24 \neq 1$  
}\label{sec:defofSpin7dDT}
Let $X^8$ be a compact connected 8-manifold with a ${\rm Spin}(7)$-structure $\Phi$ 
and $L \to X$ be a smooth complex line bundle with a Hermitian metric $h$.
In this section, 
we study deformations of $\Sp$-dDT connections $\n$ satisfying $* F_\n^4/24 \neq 1$. 
Set 
\begin{align} \label{eq:A'}
\mathcal{A}'_{0}=\{\,\nabla \in \Aa_0 \mid * F_\n^4/24 \neq 1 \}. 
\end{align}
Define a map $\Ff_{{\rm Spin}(7)}:\Aa'_0 \rightarrow \i \Om^2_7$ by 
\begin{align} \label{eq:deform map Spin7}
\Ff_{{\rm Spin}(7)} = \Ff^1_{{\rm Spin}(7)}|_{\Aa'_0}, \qquad
\nabla \mapsto \pi^2_7 \left( F_\nabla + \frac{1}{6} * F_\nabla^3 \right). 
\end{align}
Each element of $\Ff_{{\rm Spin}(7)}^{-1}(0)$ is a $\Sp$-dDT connection with  $* F_\n^4/24 \neq 1$ 
by Proposition \ref{prop:F1toF2}. 
The unitary gauge group $\Gg_U$ also acts on $\Aa_0'$, 
and 
$\Ff_{{\rm Spin}(7)}$ reduces to 
\[
\underline{\Ff_{{\rm Spin}(7)}}: \Aa'_{0}/\Gg_U \rightarrow \i \Om^2_7. 
\]
Set 
\[
\Mm'_{{\rm Spin}(7)} = \underline{\Ff_{{\rm Spin}(7)}}^{-1}(0) = \Ff_{{\rm Spin}(7)}^{-1}(0)/\Gg_U. 
\]
Then, by Proposition \ref{prop:F1toF2}, we have an inclusion 
$$
\Mm'_{{\rm Spin}(7)} = \Mm_{{\rm Spin}(7)} \cap (\Aa'_0/\Gg_U) \hookrightarrow \Mm_{{\rm Spin}(7)}. 
$$

\subsection{The differential complexes}
In this subsection, before considering deformations of $\Sp$-dDT connections, 
we introduce differential complexes that are important in this paper. 

Let $\Psi \in \Om^4$ be any $\Sp$-structure on $X$. 
The $\Sp$-structure $\Psi$ induces the decomposition of 
$\Om^\bullet$ as in \eqref{decom1-L-W8}: 
$\Om^2 = \Om^2_{7, \Psi} \oplus \Om^2_{21, \Psi}$ etc. 
Let $\pi^k_{\l, \Psi}: \Om^k \rightarrow \Om^k_{\l, \Psi}$ be the projection. 
Set 
\begin{equation}\label{cancpx}
\begin{aligned}
0 \rightarrow \i \Om^0 
\stackrel{d} \rightarrow \i \Om^1 
\stackrel{\pi^2_{7, \Psi} \circ d} \longrightarrow \i \Om^2_{7, \Psi}
\rightarrow 0, 
\end{aligned}
\tag{$\flat_{\Psi}$} 
\end{equation}
which is the canonical complex introduced by \cite{Reyes} multiplied by $\i$.  
It is known that \eqref{cancpx} is elliptic for any $\Sp$-structure $\Psi$ by \cite[Lemma 5]{Reyes}. 
Denote by 
\begin{align} \label{cancpx2}
D \eqref{cancpx} = (\pi^2_{7, \Psi} \circ d, d^*): \i \Om^1 \rightarrow \i \Om^2_{7, \Psi} \oplus \i \Om^0
\end{align}
the associated two term elliptic complex. 
Note that $d^*$ is the codifferential with respect to the metric 
induced from the initial $\Sp$-structure $\Phi$. 
The index ${\rm ind} D \eqref{cancpx}$ of $D \eqref{cancpx}$ 
is given by the minus of the index of \eqref{cancpx}, 
that is, 
\begin{align*}
{\rm ind} D \eqref{cancpx} 
=& - \dim H^0 \eqref{cancpx} + \dim H^1 \eqref{cancpx} - \dim H^2 \eqref{cancpx} \\
=& - 1 + \dim H^1 \eqref{cancpx} - \dim H^2 \eqref{cancpx}, 
\end{align*}
where $H^i \eqref{cancpx}$ is the $i$-th cohomology of the complex \eqref{cancpx}.

\subsection{The infinitesimal deformation} \label{sec:infi deform}
Let $X^8$ be a compact connected 8-manifold with a ${\rm Spin}(7)$-structure $\Phi$. 
For the rest of Section \ref{sec:defofSpin7dDT}, we suppose that $\Mm'_{\Sp} \neq \emptyset$. 
In this subsection, we study the infinitesimal deformation of $\Sp$-dDT connections. 

To study the infinitesimal deformation is highly nontrivial. 
The key is Theorem \ref{thm:1+F Spin7 neq}, which enables 
us to describe the linearization of $\Ff_\Sp$ ``nicely'' 
in terms of a new $\Sp$-structure defined by $\Phi$ and $\n$. 
Then, we can control the deformations of $\Sp$-dDT connections by 
the complex \eqref{cpxfordDT} in this subsection.

First, we show the following lemma. 
Set $\Om^k_\l = \Om^k_{\l, \Phi}$ and $\pi^k_\l =\pi^k_{\l, \Phi} :\Om^k \to \Om^k_\l$ for simplicity. 

\begin{lemma} \label{lem:decomp om27}
Set 
$$
\Hh^2_7= \{\, \beta \in \Om^2_7 \mid \Delta \beta =0 \,\}, 
$$
where $\Delta = dd^*+d^*d$ is the Laplacian. 
Then, there exists a subspace $\Vv \subset \Om^2_7$ and 
we have the orthogonal decomposition  
\begin{align} \label{eq:decomp om27}
\Om^2_7=\Hh^2_7 \oplus \Vv 
\end{align}
with respect to the $L^2$ inner product. 
\end{lemma}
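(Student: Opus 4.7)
The plan is to observe that this decomposition is essentially an immediate consequence of classical Hodge theory, because $\Hh^2_7$ is finite dimensional. By definition, $\beta \in \Hh^2_7$ means $\beta$ is a smooth $2$-form lying in the subbundle $\Om^2_7 \subset \Om^2$ and satisfying $\Delta \beta = 0$; in particular $\beta$ is harmonic, so $\Hh^2_7 \subset \Hh^2$. By the Hodge theorem on the compact Riemannian manifold $(X, g_\Phi)$, one has $\dim_\R \Hh^2 = b^2(X) < \infty$, and therefore $\Hh^2_7$ is a finite-dimensional subspace of $\Om^2_7$.

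First I would set
\[
\Vv := \{\, \alpha \in \Om^2_7 \mid \la \alpha, \beta \ra_{L^2} = 0 \text{ for all } \beta \in \Hh^2_7 \,\},
\]
the $L^2$-orthogonal complement of $\Hh^2_7$ taken inside $\Om^2_7$. By construction $\Hh^2_7 \cap \Vv = \{0\}$, and the sum $\Hh^2_7 + \Vv$ is $L^2$-orthogonal.

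Next I would verify that every $\alpha \in \Om^2_7$ admits a (unique) decomposition $\alpha = \alpha_\Hh + \alpha_\Vv$ with $\alpha_\Hh \in \Hh^2_7$ and $\alpha_\Vv \in \Vv$. Since $\Hh^2_7$ is a finite-dimensional subspace consisting of smooth forms, I can choose an $L^2$-orthonormal basis $\beta_1, \ldots, \beta_N$ of $\Hh^2_7$ and define
\[
\alpha_\Hh := \sum_{i=1}^N \la \alpha, \beta_i \ra_{L^2}\, \beta_i \in \Hh^2_7, \qquad \alpha_\Vv := \alpha - \alpha_\Hh.
\]
The form $\alpha_\Hh$ is smooth because it is a finite $\R$-linear combination of smooth forms, so $\alpha_\Vv \in \Om^2_7$; and $\la \alpha_\Vv, \beta_i \ra_{L^2} = 0$ for each $i$, so $\alpha_\Vv \in \Vv$. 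Uniqueness follows from $\Hh^2_7 \cap \Vv = \{0\}$.

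There is no serious obstacle: the statement is effectively a finite-dimensional linear algebra fact applied via the Hodge theorem. The only subtlety worth explicit mention is that $\Om^2_7$ is a space of smooth sections rather than an $L^2$-Hilbert space, so one must note that the $L^2$-projection onto the finite-dimensional smooth subspace $\Hh^2_7$ preserves smoothness; this is automatic from the explicit formula above, which bypasses any need to invoke elliptic regularity.
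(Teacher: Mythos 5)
Your proof is correct, and it takes a genuinely different and more elementary route than the paper's. You observe that $\Hh^2_7 = \Om^2_7 \cap \ker\Delta$ sits inside the full space of harmonic $2$-forms, which is finite dimensional by the standard Hodge theorem, and then the decomposition is just finite-dimensional linear algebra plus the remark that subtracting the explicit finite projection preserves smoothness. The paper instead invokes the ellipticity of the canonical complex $(\flat_\Phi)$ to obtain the $L^2$-orthogonal splitting $\Om^2_7 = \ker(\pi^2_7\circ d)^* \oplus \pi^2_7(d\Om^1)$, notes $\Hh^2_7 \subset \ker(\pi^2_7\circ d)^*$ (which is finite dimensional), picks an orthogonal complement $\Vv'$ of $\Hh^2_7$ there, and sets $\Vv = \Vv' \oplus \pi^2_7(d\Om^1)$. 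Both arguments are valid; yours is shorter and avoids appealing to the canonical-complex machinery. What the paper's version buys is the auxiliary decomposition $\Om^2_7 = \ker(\pi^2_7\circ d)^* \oplus \pi^2_7(d\Om^1)$, which is referenced again later (in the proof of Lemma \ref{lem:vanish 27}), so in the context of the whole paper it is convenient to establish it here; but it is not needed for the statement of this lemma itself.
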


\begin{proof}
By the ellipticity of the canonical complex, 
we have the following $L^2$ orthogonal decomposition: 
\begin{align} \label{eq:decomp om27 1}
\Om^2_7 = \ker (\pi^2_7 \circ d)^* \oplus \pi^2_7 (d \Om^1), 
\end{align}
where $(\pi^2_7 \circ d)^*$ is the formal adjoint of $\pi^2_7 \circ d$. 
For any $\beta \in \Hh^2_7$ and $\gamma \in \Om^1$, we have 
$$
\la \beta, (\pi^2_7 \circ d)(\gamma) \ra_{L^2}
=
\la \beta, d \gamma \ra_{L^2}
=0, 
$$
where $\la \,\cdot\,, \,\cdot\, \ra_{L^2}$ is the $L^2$ inner product. 
This implies that $\Hh^2_7 \subset \ker (\pi^2_7 \circ d)^*$. 
Since $\ker (\pi^2_7 \circ d)^*$ is finite dimensional, 
there exists a subspace $\Vv'$ so that 
we have an $L^2$ orthogonal decomposition
$$
\ker (\pi^2_7 \circ d)^* = \Hh^2_7 \oplus \Vv'. 
$$
Thus, setting $\Vv = \Vv' \oplus \pi^2_7 (d \Om^1)$, 
we obtain \eqref{eq:decomp om27}. 
\end{proof}

Recall the map $\Ff_\Sp$ given in \eqref{eq:deform map Spin7}. 
Note the following. 

\begin{lemma} \label{onimage}
The image of $\Ff_\Sp$ is contained in $\i \Vv$. 
\end{lemma}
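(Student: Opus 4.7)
By Lemma \ref{lem:decomp om27}, proving $\Ff_\Sp(\n)\in \i \Vv$ reduces to showing $\la\Ff_\Sp(\n), \i \beta\ra_{L^2}=0$ for every $\beta\in\Hh^2_7$. My plan is to exploit the standing hypothesis $\Mm'_\Sp\neq\emptyset$: fix once and for all a reference $\Sp$-dDT connection $\n_0\in\Aa'_0$, so that $\Ff_\Sp(\n_0)=0$ trivially lies in $\i\Vv$, and it is enough to prove that the difference $\Ff_\Sp(\n)-\Ff_\Sp(\n_0)$ is $L^2$-orthogonal to $\i\Hh^2_7$ for every $\n\in\Aa'_0$.

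Since $\Aa_0$ is affine over $\i\Om^1$, I would write $\n=\n_0+a$ with $a\in \i\Om^1$, so that $F_\n=F_{\n_0}+da$. Two-forms commute under the wedge product, so the binomial formula gives
$$F_\n^3-F_{\n_0}^3 = 3\,F_{\n_0}^2\wedge da + 3\,F_{\n_0}\wedge (da)^2 + (da)^3.$$
For $\beta\in\Hh^2_7\subset\Om^2_7$ the projection $\pi^2_7$ is inert in the pairing, and the identity $\la\ast\eta,\beta\ra_{L^2}=\int_X\eta\wedge\beta$ (valid for any 6-form $\eta$, using self-adjointness of $\ast$ and $\ast^2=\id$ on 2-forms) reduces $\la\Ff_\Sp(\n)-\Ff_\Sp(\n_0), \i\beta\ra_{L^2}$ to the sum of $\la da,\beta\ra_{L^2}$ and three integrals $\int_X F_{\n_0}^{3-k}\wedge (da)^k\wedge\beta$ for $k=1,2,3$. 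The first summand vanishes because $\beta$ is coclosed: $\la da,\beta\ra_{L^2}=\la a,d^*\beta\ra_{L^2}=0$.

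The key step is to realize each remaining integrand as exact. Using $dF_{\n_0}=0$, $d(da)=0$, and $d\beta=0$, a direct Leibniz computation yields
$$F_{\n_0}^{3-k}\wedge (da)^k\wedge\beta = d\bigl(F_{\n_0}^{3-k}\wedge a\wedge (da)^{k-1}\wedge\beta\bigr)\qquad (k=1,2,3),$$
so Stokes's theorem on the closed manifold $X$ forces each integral to vanish. Combined with $\Ff_\Sp(\n_0)=0$, this gives $\la\Ff_\Sp(\n),\i\beta\ra_{L^2}=0$, proving $\Ff_\Sp(\n)\in\i\Vv$. The only obstacle is the sign-bookkeeping in the three Leibniz expansions when inserting $\beta$ into the primitive of each term, which is routine; notably, no hypothesis on the torsion of $\Phi$ nor on the connectedness of $\Aa'_0$ enters the argument.
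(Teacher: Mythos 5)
Your proof is correct and follows essentially the same route as the paper's: both fix a reference $\n_0\in\Ff_{\Sp}^{-1}(0)$ (using $\Mm'_{\Sp}\neq\emptyset$), reduce to showing $\Ff_{\Sp}(\n)-\Ff_{\Sp}(\n_0)$ is $L^2$-orthogonal to $\Hh^2_7$, and exploit the exactness of $F_\n^3-F_{\n_0}^3$. The paper phrases this as $\Ff_{\Sp}(\n)-\Ff_{\Sp}(\n_0)\in\i\pi^2_7(d\Om^1\oplus d^*\Om^3)$ while you make the same fact explicit via the binomial expansion and Stokes's theorem.
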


\begin{proof}
Given two connections $\nabla, \nabla' \in \Aa'_0$, 
we know that $F_{\nabla'}-F_\nabla \in \i d \Om^1$. 
Then, it follows that 
\[
\Ff_\Sp (\nabla') - \Ff_\Sp (\nabla) \in \i \pi^2_7 (d \Om^1 \oplus d^* \Om^3). \]
Since $\pi^2_7 (d \Om^1 \oplus d^* \Om^3)$ is orthogonal to 
$\Hh^2_7$ with respect to the $L^2$ inner product, 
it follows that 
$$
\i \pi^2_7 (d \Om^1 \oplus d^* \Om^3) \subset \i \Vv 
$$ 
by \eqref{eq:decomp om27}. Thus, if we take $\nabla \in \Ff_\Sp^{-1}(0)$, the statement follows. 
\end{proof}

We now describe the linearization of $\Ff_{\Sp}$. 
Use the notation of Appendix \ref{sec:new Spin7 str}. 
Fix $\nabla \in \Ff_{\Sp}^{-1}(0)$. 
Define a new $\Sp$-structure $\Phi_\n$ by 
\begin{align} \label{eq:newSpin7str}
\Phi_\n = 
(\id_{TX} + (- \i F_\n)^\sharp)^* \Phi. 
\end{align}
Denote by $g_{\nabla}$ the Riemannian metric on $X$ induced from $\Phi_\n$. 
Denote by $*_\n$ and $d^{*_\n}$ 
the Hodge star and the formal adjoint of the exterior derivative $d$ with respect to $g_{\n}$, respectively. 
The $\Sp$-structure $\Phi_\n$ induces the decomposition of 
$\Om^\bullet$ as in \eqref{decom1-L-W8}: 
$\Om^2 = \Om^2_{7, \n} \oplus \Om^2_{21, \n}$ etc. 
Let $\pi^k_{\l, \n}: \Om^k \rightarrow \Om^k_{\l, \n}$ be the projection. 
Then, by \eqref{eq:proj2} we have 
$$
\pi^k_{\l, \n} = (\id_{TX} + (- \i F_\n)^\sharp)^* \circ \pi^k_{\l} \circ ((\id_{TX} + (- \i F_\n)^\sharp)^{-1})^*. 
$$

\begin{remark}
In \cite{KY}, we show that 
the operator 
$(\id_{TX} + (- \i F_\n)^\sharp)^*$ plays an important role 
for deformations of dDT connections on a $G_2$-manifold. 
It also turns out to be successful for the deformation theory of $\Sp$-dDT connections. 
\end{remark}

Then, the linearization of $\Ff_{\Sp}$ at $\nabla$ is given as follows. 

\begin{proposition}
Fix $\nabla \in \Ff_{\Sp}^{-1}(0)$. 
Then, there exists a bundle isomorphism $P_\n:\Om^2_7 \rightarrow \Om^2_7$ 
depending on  $\n$ so that  
the linearization 
$\delta_\nabla \Ff_{\Sp}: \i \Om^1 \rightarrow \i \Vv$ 
of $\Ff_{\Sp}$ at $\nabla$ is given by  
\begin{equation}\label{eq:1stder Spin7}
\delta_\nabla \Ff_{\Sp}
= P_\n \circ  ((\id_{TX} + (- \i F_\n)^\sharp)^{-1})^* \circ \pi^2_{7, \n} \circ d. 
\end{equation}
\end{proposition}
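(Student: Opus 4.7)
The plan is to compute $\delta_\nabla \Ff_{\Sp}$ by a direct variation and then repackage the result through the new $\Sp$-structure $\Phi_\nabla$, using a pointwise algebraic identity from the appendix to extract $P_\nabla$.

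First, for $a \in \Om^1$ take the path $\nabla_t = \nabla + t \i a \cdot \id_L$, which lies in $\Aa'_0$ for small $t$ since $\Aa'_0$ is open in $\Aa_0$. Using $F_{\nabla_t} = F_\nabla + t\i\, da$ and commutativity of the wedge on $2$-forms, $\frac{d}{dt}\big|_{t=0} F_{\nabla_t}^3 = 3\i\, da \wedge F_\nabla^2$, hence
\[
\delta_\nabla \Ff_{\Sp}(\i a) \;=\; \i\, \pi^2_7\!\left( da + \tfrac{1}{2} * (da \wedge F_\nabla^2)\right).
\]
By Lemma \ref{onimage} this expression automatically lies in $\i\Vv$, matching the target of \eqref{eq:1stder Spin7}.

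Next, set $\sigma := \id_{TX} + (-\i F_\nabla)^\sharp$, so that $\Phi_\nabla = \sigma^* \Phi$ and, by \eqref{eq:proj2}, $\pi^2_{7,\nabla} = \sigma^* \circ \pi^2_7 \circ (\sigma^{-1})^*$. Consequently
\[
(\sigma^{-1})^* \circ \pi^2_{7,\nabla} \circ d(\i a) \;=\; \i\, \pi^2_7\bigl((\sigma^{-1})^* da\bigr),
\]
and comparing with the derivative above, proving \eqref{eq:1stder Spin7} reduces to producing a bundle isomorphism $P_\nabla : \Om^2_7 \to \Om^2_7$ depending on $\nabla$ such that the pointwise identity
\[
\pi^2_7\!\left(\beta + \tfrac{1}{2} * (\beta \wedge F_\nabla^2)\right) \;=\; P_\nabla\bigl(\pi^2_7\bigl((\sigma^{-1})^* \beta\bigr)\bigr)
\]
holds for every $\beta \in \Om^2$.

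This is precisely the role of Theorem \ref{thm:1+F Spin7 neq} in Appendix \ref{sec:new Spin7 str}, and verifying it is the main obstacle. The anticipated route is pointwise $\Sp$-representation theory: normalize $F_\nabla$ via Lemma \ref{lem:2form cong} and use the identities of Proposition \ref{prop:2form norm} to evaluate both sides in an adapted frame, thereby reading off $P_\nabla$ explicitly as an endomorphism of $\Om^2_7$ built from the invariants of $F_\nabla$. The hypothesis $*F_\nabla^4/24 \neq 1$, built into the definition of $\Aa'_0$, enters in two ways: it makes $\sigma$ pointwise invertible so that $(\sigma^{-1})^*$ is defined, and it forces $P_\nabla$ to be nondegenerate, hence a genuine bundle isomorphism rather than merely a bundle map. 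Once Theorem \ref{thm:1+F Spin7 neq} is in hand, \eqref{eq:1stder Spin7} follows by combining it with the direct derivative computation above.
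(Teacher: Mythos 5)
Your proposal is correct and follows essentially the same route as the paper: compute $\delta_\nabla\Ff_\Sp(\i a)=\i\,\pi^2_7\bigl(da+\tfrac12*(da\wedge F_\nabla^2)\bigr)$ directly, observe that $\nabla\in\Ff_\Sp^{-1}(0)$ makes $F:=-\i F_\nabla$ satisfy $\pi^2_7(F-*F^3/6)=0$, and then invoke Theorem \ref{thm:1+F Spin7 neq}(1) to repackage the operator through $\Phi_\nabla$. The paper's own proof is exactly this two-step reduction, citing the appendix theorem as a black box just as you do.
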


\begin{proof}
By the definition of $\Ff_{\Sp}$ in \eqref{eq:deform map Spin7}, 
$\delta_\n \Ff_{\Sp}$ is given by 
\[
\begin{aligned}
\delta_\n \Ff_{\Sp} (\i b) 
= & 
\i \pi^2_7 \left(  d b + * \left(\frac{1}{2} F_\nabla^2 \wedge d b \right) \right). 
\end{aligned}
\]
for $b \in \Om^1$. 
Since $F_\n$ is $\i \R$-valued, 
$\nabla \in \Ff_{\Sp}^{-1}(0)$ implies that 
$- \i F_\n \in \Om^2$ satisfies 
$
\pi^2_7 \left( (- \i F_\n) - * (- \i F_\n)^3/6 \right) = 0. 
$
Then, Theorem \ref{thm:1+F Spin7 neq} (1) implies \eqref{eq:1stder Spin7}. 
\end{proof}

For any fixed $\nabla \in \Ff_{\Sp}^{-1}(0)$, 
consider the following complex 
\begin{equation}\label{cpxfordDT}
\begin{aligned}
0 \rightarrow \i \Om^0 
\stackrel{d} \rightarrow \i \Om^1 
\stackrel{\delta_\n \Ff_{\Sp}} \longrightarrow \i \Vv 
\rightarrow 0. 
\end{aligned}
\tag{$\#_\n$} 
\end{equation}
By \eqref{eq:Gu orbit Spin7}, the tangent space of $\Gg_U$-orbit through $\n$ is 
identified with $\i d \Om^1$. 
Hence, 
the first cohomology $H^1(\#_\n)$ of \eqref{cpxfordDT} is considered to be the tangent space of 
$\Mm'_{\Sp}$. 
We show that 
the second cohomology $H^2(\#_\n)$ is the obstruction space in Theorem \ref{thm:moduli MSpin7}. 
Denote by 
\begin{align} \label{cancpxfordDT2}
D \eqref{cpxfordDT} = (\delta_\n \Ff_{\Sp}, d^*): \i \Om^1 \rightarrow \i \Vv \oplus \i \Om^0
\end{align}
the associated two term elliptic complex. 
Note that if $F_\n=0$, $D \eqref{cpxfordDT}$ becomes 
\begin{align} \label{cancpxfordDT3}
D = (\pi^2_7 \circ d, d^*): \i \Om^1 \rightarrow \i \Vv \oplus \i \Om^0. 
\end{align}

\subsection{The expected dimension of $\Mm'_{\Sp}$} \label{sec:expdim}
In this subsection, we compute the expected dimension of $\Mm'_{\Sp}$. 
 
Via the inclusion $\Vv \hookrightarrow \Om^2_7$, 
we also regard $\delta_\n \Ff_{\Sp}$ as a map $\delta_\n \Ff_{\Sp}: \i \Om^1 \rightarrow \i \Om^2_7$ 
for $\n \in \Ff_\Sp^{-1}(0)$. 
Then, 
by \eqref{eq:1stder Spin7}, we have the following commutative diagram. 
\begin{align} \label{eq:commdiag}
\xymatrix{
0\ar[r] & \i \Om^0 \ar@{=}[d] \ar[r]^-{d} & \i \Om^1 \ar@{=}[d] \ar[r]^-{\delta_\n \Ff_{\Sp}}
& \i \Om^2_7 \ar[r] & 0\\
0\ar[r] & \i \Om^0 \ar[r]^-{d} & \i \Om^1 \ar[r]^-{\pi^2_{7, \n} \circ d} & \i \Om^2_{7, \n} 
\ar[u]_{P_\n \circ ((\id_{TX} + (- \i F_\n)^\sharp)^{-1})^*}^\cong 
\ar[r]
& 0 
}
\end{align}
The complex \eqref{cpxfordDT} is a subcomplex of the top row 
and the bottom row is the canonical complex $(\flat_{\Phi_\n})$ for the $\Sp$-structure $\Phi_\n$. 
In particular, \eqref{cpxfordDT} is considered to be a subcomplex of 
the canonical complex \eqref{cancpx} for $\n \in \Ff_\Sp^{-1} (0)$.

\begin{remark} \label{rem:prob}
In fact, we can obtain the commutative diagram similar to \eqref{eq:commdiag} 
if we do not assume $*F_\n^4/24 \neq 1$. 
Set 
$$
\Ff = \left( \Ff^{1}_{{\rm Spin}(7)}, \Ff^{2}_{{\rm Spin}(7)} \right): 
\Aa_0 \to \i \Om^2_7 \oplus \Om^4_7. 
$$
For $\n \in \Aa_0$, define 
$
(T_\n, S_\n): \i \Om^2 \to \i \Om^2_7 \oplus \Om^4_7
$
by 
$$
T_\n(\beta)= 
\pi^2_7 \left(  \beta + * \left(\frac{1}{2} F_\nabla^2 \wedge \beta \right) \right), 
\qquad 
S_\n(\beta)= 2 \pi^4_7 \left( F_\n \wedge \beta \right). 
$$
Then, we have 
$\delta_\n \Ff (\i b) = (T_\n, S_\n) (\i d b)$ for $b \in \Om^1$. 
By Theorem \ref{thm:1+F Spin7 neq} (2), 
for $\n \in \Aa_0$ satisfying $\Ff (\nabla)=0$, 
there exists an isomorphism 
$Q_\n: \i \Om^2_7 \to \Im (T_\n, S_\n)$ 
such that the following diagram commutes. 
\begin{align*}
\xymatrix{
0\ar[r] & \i \Om^0 \ar@{=}[d] \ar[r]^-{d} & \i \Om^1 \ar@{=}[d] 
\ar[r]^-{\delta_\n \Ff}
& \Im (T_\n, S_\n) \ar[r] & 0\\
0\ar[r] & \i \Om^0 \ar[r]^-{d} & \i \Om^1 \ar[r]^-{\pi^2_{7, \n} \circ d} & \i \Om^2_{7, \n} 
\ar[u]_{Q_\n \circ ((\id_{TX} + (- \i F_\n)^\sharp)^{-1})^*}^\cong 
\ar[r]
& 0 
}
\end{align*}
Thus, we can expect that 
the similar deformation theory would work if we do not assume $*F_\n^4/24 \neq 1$. 
The problem is that 
we do not know whether $\Ff$ is a section 
of an infinite dimensional vector bundle 
$$
\bigcup_{\n \in \Aa_0} \Im (T_\n, S_\n). 
$$
If this is the case, it is very likely that 
the similar deformation theory works without assuming $*F_\n^4/24 \neq 1$. 
\end{remark}

To compute the expected dimension of $\Mm'_{\Sp}$, 
we make use of the index of the canonical complex. 
First, note the following. 
\begin{lemma}\label{lem:2cohom}
For $\n \in \Ff_\Sp^{-1}(0)$, we have 
$$
H^1(\flat_{\Phi_\n}) \cong H^1 \eqref{cpxfordDT} \quad \mbox{and} \quad  
H^2(\flat_{\Phi_\n}) \cong \i \Hh^2_7 \oplus H^2 \eqref{cpxfordDT}.
$$ 
In other words, we have 
$$
\ker D (\flat_{\Phi_\n}) \cong \ker D \eqref{cpxfordDT} \quad \mbox{and} \quad 
{\rm Coker} D (\flat_{\Phi_\n}) \cong \i \Hh^2_7 \oplus {\rm Coker} D \eqref{cpxfordDT}.
$$
\end{lemma}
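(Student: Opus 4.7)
The plan is to exploit the commutative diagram \eqref{eq:commdiag}, whose bottom row is the canonical Reyes complex $(\flat_{\Phi_\n})$ and whose top row contains $(\#_\n)$ as a subcomplex. Writing $\tilde P := P_\n \circ ((\id_{TX} + (-\i F_\n)^\sharp)^{-1})^*$ for the vertical bundle isomorphism $\i\Om^2_{7,\n} \to \i\Om^2_7$, the factorization $\delta_\n \Ff_\Sp = \tilde P \circ (\pi^2_{7,\n}\circ d)$ read off from the diagram immediately gives $\ker \delta_\n \Ff_\Sp = \ker(\pi^2_{7,\n}\circ d)$ as subspaces of $\i\Om^1$, so $H^1(\#_\n) = H^1(\flat_{\Phi_\n})$ as subquotients of $\i\Om^1$.

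For the $H^2$ comparison I would first check that $\Im \delta_\n \Ff_\Sp \subset \i\Vv$. This follows from Lemma \ref{onimage}, since the linearization at any point of a smooth map whose image lies in a linear subspace again has image in that subspace. The isomorphism $\tilde P$ then restricts to a bijection $\Im(\pi^2_{7,\n}\circ d) \to \Im \delta_\n \Ff_\Sp$, and so descends to an isomorphism
\[
H^2(\flat_{\Phi_\n}) = \i\Om^2_{7,\n}/\Im(\pi^2_{7,\n}\circ d) \xrightarrow{\ \tilde P\ } \i\Om^2_7/\Im \delta_\n \Ff_\Sp.
\]
Invoking the orthogonal decomposition $\Om^2_7 = \Hh^2_7 \oplus \Vv$ from Lemma \ref{lem:decomp om27}, together with the containment $\Im \delta_\n \Ff_\Sp \subset \i\Vv$ just established, the right-hand quotient splits as $\i\Hh^2_7 \oplus (\i\Vv/\Im \delta_\n \Ff_\Sp) = \i\Hh^2_7 \oplus H^2(\#_\n)$, which is the first claim.

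The ``in other words'' reformulation is then routine. For the two-term operator associated to a three-term elliptic complex, standard Hodge theory identifies $\ker D \cong H^1$ and ${\rm Coker}\, D \cong H^0 \oplus H^2$; applying this to both $D(\flat_{\Phi_\n})$ and $D(\#_\n)$ and using $H^0(\flat_{\Phi_\n}) = H^0(\#_\n) = \i\R$ (both complexes begin with $d: \i\Om^0 \to \i\Om^1$, whose kernel on the connected manifold $X$ consists of the constants) converts the cohomology isomorphisms above into the stated identifications of $\ker D$ and ${\rm Coker}\, D$. The step I would verify most carefully, and the main obstacle such as it is, is that $\tilde P$ honestly carries $\Im(\pi^2_{7,\n}\circ d)$ onto $\Im \delta_\n \Ff_\Sp$ as a subspace of $\i\Vv$ without introducing any $\i\Hh^2_7$ component; but this is exactly the content of the factorization $\delta_\n \Ff_\Sp = \tilde P \circ (\pi^2_{7,\n}\circ d)$ combined with Lemma \ref{onimage}, so the whole argument amounts to a careful diagram chase.
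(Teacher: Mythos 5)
Your argument is correct and follows essentially the same route as the paper: read the $H^1$ identification directly off the commutative diagram \eqref{eq:commdiag}, then for $H^2$ use the vertical isomorphism to pass to $\i\Om^2_7/\Im\delta_\n\Ff_\Sp$, invoke Lemma \ref{lem:decomp om27} and Lemma \ref{onimage} to split off $\i\Hh^2_7$. The paper's proof is terser (it omits the justification of the ``in other words'' reformulation, which you correctly supply via the standard identification $\ker D \cong H^1$, $\mathrm{Coker}\,D \cong H^0\oplus H^2$ together with $H^0 = \i\R$ for both complexes on the connected manifold $X$), but the substance is the same.
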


\begin{proof}
The first statement follows from \eqref{eq:commdiag}. 
By \eqref{eq:commdiag} and \eqref{eq:decomp om27}, we see that 
$$
H^2(\flat_{\Phi_\n}) 
=
\frac{\i \Om^2_{7, \n}}{\i \pi^2_{7, \n}(d \Om^1)}
\cong 
\frac{\i \Om^2_7}{\delta_\n \Ff_{\Sp} (\i \Om^1)}
= \frac{\i (\Hh^2_7 \oplus \Vv )}{\delta_\n \Ff_{\Sp} (\i \Om^1)}. 
$$
Since $\delta_\n \Ff_{\Sp} (\i \Om^1) \subset \i \Vv$ 
by Lemma \ref{onimage}, 
it follows that 
$$
H^2(\flat_{\Phi_\n}) 
\cong \i \Hh^2_7 \oplus \frac{\i \Vv}{\delta_\n \Ff_{\Sp} (\i \Om^1)}
= \i \Hh^2_7 \oplus H^2 \eqref{cpxfordDT}. 
$$
\end{proof}

\begin{lemma}\label{lem:ind Phin}
For $\n \in \Ff_\Sp^{-1}(0)$, we have 
${\rm ind} D (\flat_{\Phi_\n}) = {\rm ind} D (\flat_\Phi)$. 
\end{lemma}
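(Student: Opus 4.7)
The plan is to deduce the equality of indices from the homotopy invariance of the Fredholm index, by exhibiting an explicit homotopy of $\Sp$-structures connecting $\Phi$ to $\Phi_\n$.

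First I would construct the path. Define
\[
\Phi_t := (\id_{TX} + t(-\i F_\n)^\sharp)^* \Phi, \qquad t \in [0,1],
\]
so that $\Phi_0 = \Phi$ and $\Phi_1 = \Phi_\n$. Since $F_\n$ takes values in $\i \R$, the endomorphism $(-\i F_\n)^\sharp$ is a real skew-symmetric endomorphism of $TX$ with respect to $g_\Phi$. Its pointwise eigenvalues are therefore purely imaginary, and a short computation shows
\[
\det \bigl(\id_{TX} + t(-\i F_\n)^\sharp\bigr) = \prod_j (1 + t^2 \lambda_j^2) > 0
\]
for every $t \in [0,1]$. Hence $\id_{TX} + t(-\i F_\n)^\sharp$ is a smoothly varying bundle isomorphism preserving orientation, and each $\Phi_t$ is a genuine $\Sp$-structure on $X$.

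Next I would check that $\{D(\flat_{\Phi_t})\}_{t \in [0,1]}$ forms a continuous family of elliptic operators between the appropriate Sobolev completions. The codifferential $d^*$ in \eqref{cancpx2} is taken with respect to the fixed metric $g_\Phi$ and so does not depend on $t$. The projection $\pi^2_{7, \Phi_t}$ varies smoothly in $t$ by formula \eqref{eq:proj2}, which expresses it as
\[
\pi^2_{7, \Phi_t} = (\id_{TX} + t(-\i F_\n)^\sharp)^* \circ \pi^2_7 \circ ((\id_{TX} + t(-\i F_\n)^\sharp)^{-1})^*,
\]
using invertibility of the bracketed map together with smoothness of matrix inversion. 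Each $D(\flat_{\Phi_t})$ is elliptic by \cite[Lemma 5]{Reyes} applied to the $\Sp$-structure $\Phi_t$, hence Fredholm.

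Finally I would invoke the standard homotopy invariance of the Fredholm index: since $t \mapsto D(\flat_{\Phi_t})$ is continuous with values in the Fredholm operators, ${\rm ind}\,D(\flat_{\Phi_t})$ is locally constant in $t$, and evaluating at the endpoints yields ${\rm ind}\,D(\flat_{\Phi_\n}) = {\rm ind}\,D(\flat_{\Phi})$. The only delicate point is the continuity of the family $\pi^2_{7, \Phi_t}$ — specifically, the constancy of the rank of the subbundle $\Om^2_{7, \Phi_t} \subset \Lambda^2 T^*X$ — but this is automatic from $\Sp$-equivariance and invertibility of the twisting map, so no further argument is needed beyond what appears above.
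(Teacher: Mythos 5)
Your approach is the same as the paper's: construct the path $\Phi_t = (\id_{TX} + t(-\i F_\n)^\sharp)^*\Phi$ and appeal to homotopy invariance of the Fredholm index. But there is a genuine (if small) gap, and you misdiagnose it. The issue with applying homotopy invariance directly to $D(\flat_{\Phi_t}) = (\pi^2_{7,\Phi_t}\circ d, d^*)$ is not constancy of rank of the subbundles $\Lambda^2_{7,\Phi_t}T^*X$ — that is trivially $7$ at every point — but that these operators have \emph{codomains} $\i\Om^2_{7,\Phi_t}\oplus\i\Om^0$ that vary with $t$. The textbook statement of homotopy invariance requires a family of Fredholm operators between \emph{fixed} Banach spaces. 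The paper closes this gap by post-composing with the bundle isomorphism $((\id_{TX}+t(-\i F_\n)^\sharp)^{-1})^* : \Om^2_{7,\Phi_t} \to \Om^2_7$, which does not change the index of each operator but converts the family into one between the fixed spaces $\i\Om^1$ and $\i\Om^2_7\oplus\i\Om^0$, to which homotopy invariance applies. The raw material for this is already present in your write-up (you quote the conjugation formula \eqref{eq:proj2}), so the fix is one line, but as written the appeal to "homotopy invariance" doesn't typecheck, and the remark about rank constancy addresses the wrong worry.
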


\begin{proof}
For $s \in [0,1]$, define a $\Sp$-structure $\Phi (s)$ by 
$$
\Phi (s) = (\id_{TX} + s (- \i F_\n)^\sharp)^* \Phi. 
$$
Then, $\Phi (0)=\Phi$ and $\Phi (1) = \Phi_\n$. 
By \eqref{cancpx2}, we can define an elliptic operator 
$D(\flat_{\Phi (s)})$ for any $s \in [0,1]$ by 
$$
D(\flat_{\Phi (s)}) = (\pi^2_{7, \Phi (s)} \circ d, d^*): \i \Om^1 \rightarrow \i \Om^2_{7, \Phi (s)} 
\oplus \i \Om^0. 
$$ 
Since 
$((\id_{TX} + s (-\i F_\n)^\sharp)^{-1})^* :\Om^2_{7, \Phi (s)} \rightarrow \Om^2_7$
is an isomorphism by construction, 
the index of $D (\flat_{\Phi (s)})$ agrees with 
that of 
$$
((\id_{TX} + s (-\i F_\n)^\sharp)^{-1})^* \circ \pi^2_{7, \Phi (s)} \circ d, d^*): 
\i \Om^1 \rightarrow \i \Om^2_7 \oplus \i \Om^0. 
$$ 
Since $\{ ((\id_{TX} + s (-\i F_\n)^\sharp)^{-1})^* \circ \pi^2_{7, \Phi (s)} \circ d, d^*) \}_{s \in [0,1]}$ 
defines a family of elliptic operators between 
$\i \Om^1$ and $\i \Om^2_7 \oplus \i \Om^0$ and 
the index is invariant by perturbation, 
we obtain 
${\rm ind} D (\flat_{\Phi_\n}) = {\rm ind} D  (\flat_{\Phi (1)}) 
= {\rm ind} D (\flat_{\Phi (0)}) = {\rm ind} D(\flat_\Phi)$. 
\end{proof}

\begin{proposition} \label{prop:expected dim Spin7}
The expected dimension of $\Mm'_{\Sp}$ is given by 
\[
\dim H^1 \eqref{cpxfordDT} - \dim H^2 \eqref{cpxfordDT} 
=1 + \dim \Hh^2_7 - \hat A_2. 
\]
Here, 
$\hat A_2 = (7 p_1^2 -4 p_2)/5760$ 
is the second term of the $\hat A$-genus of $X$, 
where $p_i$ is the $i$-th Pontryagin class of $X$. 
\end{proposition}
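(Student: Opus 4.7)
The plan is to reduce the second equality to a computation of the index of the canonical elliptic complex $(\flat_\Phi)$ attached to the original $\Sp$-structure, and then to evaluate that index by identifying its associated two-term operator with a Dirac operator on $X$. The first equality in the statement is essentially the definition of the expected dimension: $H^1(\#_\n)$ is the Zariski tangent space and $H^2(\#_\n)$ is the obstruction space to deformations of $[\n] \in \Mm'_{\Sp}$, as will be verified in the next subsection.

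First, I would combine the two preceding lemmas. By Lemma \ref{lem:2cohom},
\[
\dim H^1(\#_\n) - \dim H^2(\#_\n) = \dim \Hh^2_7 + \bigl(\dim H^1(\flat_{\Phi_\n}) - \dim H^2(\flat_{\Phi_\n})\bigr).
\]
Since $X$ is connected, $\dim H^0(\flat_{\Phi_\n}) = 1$, so the index definition recorded in \eqref{cancpx2} gives $\dim H^1(\flat_{\Phi_\n}) - \dim H^2(\flat_{\Phi_\n}) = 1 + {\rm ind}\, D(\flat_{\Phi_\n})$. Lemma \ref{lem:ind Phin} then replaces this by $1 + {\rm ind}\, D(\flat_\Phi)$, yielding
\[
\dim H^1(\#_\n) - \dim H^2(\#_\n) = 1 + \dim \Hh^2_7 + {\rm ind}\, D(\flat_\Phi).
\]
Hence everything reduces to proving ${\rm ind}\, D(\flat_\Phi) = -\hat A_2$.

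For this last identity I would invoke the Atiyah--Singer index theorem. A $\Sp$-structure reduces the frame bundle of $X$ to $\Sp \subset {\rm Spin}(8)$; the branching of the two half-spin representations of ${\rm Spin}(8)$ under $\Sp$ yields natural bundle isomorphisms
\[
S^- \cong T^*X, \qquad S^+ \cong \underline{\R} \oplus \Lambda^2_7 T^*X.
\]
A pointwise $\Sp$-equivariant computation, using the explicit formula for $\lambda^2$ from Lemma \ref{lem:lambdas}, shows that Clifford multiplication by $\xi \in T^*X$ on $b \in S^-$ is, up to a universal nonzero scalar, the map $b \mapsto (\pi^2_7(\xi \wedge b), -\la \xi, b \ra)$. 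Therefore the principal symbol of $D(\flat_\Phi) = (\pi^2_7 \circ d, d^*): \i \Om^1 \to \i \Om^2_7 \oplus \i \Om^0$ agrees, up to this scalar, with that of $\Ds^-: \Gamma(S^-) \to \Gamma(S^+)$. Since the index of an elliptic operator depends only on its principal symbol,
\[
{\rm ind}\, D(\flat_\Phi) = {\rm ind}\, \Ds^- = -{\rm ind}\, \Ds^+ = -\hat A(X)[X] = -\hat A_2,
\]
the last equality because on an 8-manifold the top-degree component of $\hat A$ is $(7 p_1^2 - 4 p_2)/5760$. Substituting this into the previous display yields the proposition.

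The only non-formal step in this plan is the principal-symbol identification in the Dirac comparison. A welcome feature of this approach is that the identification is a purely pointwise linear-algebra computation on the model $8$-dimensional space $W$; in particular it does not require $\Phi$ to be torsion-free, so the formula ${\rm ind}\, D(\flat_\Phi) = -\hat A_2$ applies to any $\Sp$-structure on $X$. Everything else in the argument is bookkeeping via Lemmas \ref{lem:2cohom} and \ref{lem:ind Phin}.
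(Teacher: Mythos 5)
Your bookkeeping in the first half of the argument matches the paper's proof exactly: you combine Lemma \ref{lem:2cohom} (to shift the cohomology count from $(\#_\n)$ to $(\flat_{\Phi_\n})$ at the cost of $\dim\Hh^2_7$) with Lemma \ref{lem:ind Phin} (to replace ${\rm ind}\,D(\flat_{\Phi_\n})$ by ${\rm ind}\,D(\flat_\Phi)$), and the arithmetic is correct. Where you diverge is in the last step. The paper simply cites \cite[Proposition 5.14]{MS2} for the fact ${\rm ind}\,D(\flat_\Phi)=-\hat A_2$, whereas you sketch a proof of that fact from scratch via the Dirac comparison. That is a legitimate independent route: the spinor branchings $S^-|_\Sp\cong W_8\cong T^*X$ and $S^+|_\Sp\cong W_1\oplus W_7\cong\underline{\R}\oplus\Lambda^2_7T^*X$ are standard, the operator $D(\flat_\Phi)=(\pi^2_7\circ d,\,d^*)$ has the right source and target, and once the principal symbols are matched the Atiyah--Singer theorem gives $-\hat A(X)[X]=-\hat A_2$. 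Your observation that this holds for an arbitrary (not necessarily torsion-free) $\Sp$-structure, since it is a symbol-level statement, is also correct and is implicitly used in the paper.

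One small imprecision worth fixing: you assert Clifford multiplication $c(\xi):S^-\to S^+$ equals $b\mapsto(\pi^2_7(\xi\wedge b),-\la\xi,b\ra)$ ``up to a universal nonzero scalar.'' Schur's lemma only tells you that, for each fixed $\xi$, the $\Sp$-equivariant maps $T^*X\to\Lambda^2_7T^*X$ and $T^*X\to\R$ depending linearly on $\xi$ are each one-dimensional, so $c(\xi)$ could a priori carry two \emph{different} nonzero constants on the two summands of $S^+$. This does not harm the argument --- composing with a block-diagonal bundle automorphism of $S^+$ leaves the index unchanged --- but you should phrase it as ``up to a bundle isomorphism of $S^+$'' rather than a single scalar, or else explicitly compute both constants from $\lambda^2$ as you propose. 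With that adjustment the proposal is a complete and correct alternative to the paper's citation of \cite{MS2}.
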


\begin{proof}
By Lemma \ref{lem:2cohom}, we have 
\begin{align} \label{eq:expected dim Spin7}
\begin{split}
{\rm ind} D (\flat_{\Phi_\n}) 
=& -1 + \dim H^1 (\flat_{\Phi_\n}) - \dim H^2 (\flat_{\Phi_\n}) \\
=& -1 + \dim H^1 \eqref{cpxfordDT} - (\dim H^2 \eqref{cpxfordDT} + \dim \Hh^2_7). 
\end{split}
\end{align}
By \cite[Proposition 5.14]{MS2}, we have 
${\rm ind} D (\flat_\Phi) = - \hat A_2$. 
Then, by Lemma \ref{lem:ind Phin}, we obtain 
$\dim H^1 \eqref{cpxfordDT} - \dim H^2 \eqref{cpxfordDT} = 1 + \dim \Hh^2_7 - \hat A_2$. 
\end{proof}

If the $\Sp$-structure $\Phi$ is torsion-free, we know that 
\begin{align} \label{eq:Ahat}
\hat A_2 = 1 - b^1 + \dim \Hh^2_7
\end{align}
by \cite[p.43]{MS2} or \cite[(10.25)]{Joyce0}, 
where $b^1$ is the first Betti number. 
Then, the expected dimension of $\Mm'_{\Sp}$ is given more simply as follows. 

\begin{lemma}\label{lem:ind Phi}
If the $\Sp$-structure $\Phi$ is torsion-free, 
we have  
\[
\dim H^1 \eqref{cpxfordDT} - \dim H^2 \eqref{cpxfordDT} =b^1. 
\]
\end{lemma}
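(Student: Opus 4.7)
The plan is essentially a one-line substitution using results already in hand. By Proposition \ref{prop:expected dim Spin7}, which was proved without any assumption on $\Phi$, I know that
\[
\dim H^1 \eqref{cpxfordDT} - \dim H^2 \eqref{cpxfordDT} = 1 + \dim \Hh^2_7 - \hat A_2.
\]
The only additional input needed is the identity \eqref{eq:Ahat}, namely $\hat A_2 = 1 - b^1 + \dim \Hh^2_7$, which is the standard expression of the second term of the $\hat A$-genus in the torsion-free case (as cited from \cite[p.43]{MS2} or \cite[(10.25)]{Joyce0}). Substituting this into the formula from Proposition \ref{prop:expected dim Spin7} gives
\[
1 + \dim \Hh^2_7 - (1 - b^1 + \dim \Hh^2_7) = b^1,
\]
and the conclusion follows immediately.

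There is no real obstacle here; the lemma is an arithmetic corollary of Proposition \ref{prop:expected dim Spin7} together with the identity \eqref{eq:Ahat}. The main conceptual content has already been absorbed in the earlier work: first, the index computation ${\rm ind}\, D(\flat_\Phi) = -\hat A_2$ from \cite{MS2} combined with the homotopy invariance of the index in Lemma \ref{lem:ind Phin} (which lets us replace $\Phi_\n$ by $\Phi$ in the index calculation); and second, the Hodge-theoretic identification \eqref{eq:Ahat} of $\hat A_2$ in the torsion-free case, which depends on the fact that $\Sp$-holonomy (or more generally a torsion-free $\Sp$-structure) forces the harmonic forms to split nicely under the $\Sp$-type decomposition, so that $\dim \mathcal{H}^2_7 = \dim H^2_7(X)$ enters the index formula in a controlled way. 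Once these two ingredients are invoked, the statement is immediate and the proof reduces to the substitution above.
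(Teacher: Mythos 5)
Your proposal is correct and matches the paper's argument exactly: the lemma is stated in the paper immediately after the identity \eqref{eq:Ahat} is recalled, with no further proof given, precisely because it is the one-line substitution of \eqref{eq:Ahat} into Proposition \ref{prop:expected dim Spin7} that you describe.
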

We also obtain the following from \eqref{eq:Ahat}. 
\begin{lemma} \label{lem:vanish 27}
Suppose that the $\Sp$-structure $\Phi$ is torsion-free. 
If $\beta \in \Om^2_7$ satisfies $d^* \beta =0$, then $\beta \in \Hh^2_7$. 
\end{lemma}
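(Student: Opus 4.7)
The plan is to pin down the dimension of $\{\beta \in \Om^2_7 : d^*\beta = 0\}$ via the index formula for the two-term elliptic operator $D(\flat_\Phi) = (\pi^2_7 \circ d, d^*) : \i \Om^1 \to \i \Om^2_7 \oplus \i \Om^0$ of \eqref{cancpx2} (with $\Psi = \Phi$), and then compare it against $\dim \Hh^2_7$ using the obvious inclusion $\Hh^2_7 \subseteq \{\beta \in \Om^2_7 : d^*\beta = 0\}$. Computing the formal adjoint gives $D(\flat_\Phi)^*(\beta, f) = d^*\beta + df$, and the two summands are $L^2$-orthogonal because $\la d^*\beta, df\ra_{L^2} = \la \beta, d(df)\ra_{L^2} = 0$. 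Hence $\ker D(\flat_\Phi)^*$ consists of pairs $(\beta, f)$ with $d^*\beta = 0$ and $df = 0$; since $X$ is compact and connected, $f$ must be constant, giving
\[
\dim \ker D(\flat_\Phi)^* = \dim\{\beta \in \Om^2_7 : d^*\beta = 0\} + 1.
\]

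The next step is to identify $\ker D(\flat_\Phi)$ with $\i \Hh^1$, so that its real dimension equals $b^1$. I regard this as the main obstacle of the argument and the nontrivial input from refined Hodge theory on torsion-free $\Sp$-manifolds: on such a manifold the Hodge Laplacian preserves the $\Sp$-type decomposition, and combined with Ricci-flatness one concludes that any $\alpha \in \Om^1$ satisfying $d^*\alpha = 0$ and $\pi^2_7(d\alpha) = 0$ is automatically closed, hence harmonic. This fact is implicit in the dimensional identities used to derive \eqref{eq:Ahat}; see \cite[(10.25)]{Joyce0} and the discussion in \cite[p.43]{MS2}. Alternatively one can give a direct Weitzenbock-type argument exploiting $\Sp$-equivariance of $d$ on $\Om^1$ to express the $\Om^2_{21}$-component of $d\alpha$ in terms of $\pi^2_7 (d\alpha)$ and $d^*\alpha$.

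Finally, combining the index formula $\mathrm{ind}\, D(\flat_\Phi) = -\hat A_2$ from \cite[Proposition 5.14]{MS2} with \eqref{eq:Ahat} yields
\[
b^1 - \bigl(\dim\{\beta \in \Om^2_7 : d^*\beta = 0\} + 1\bigr) = -\hat A_2 = -1 + b^1 - \dim \Hh^2_7,
\]
so that $\dim\{\beta \in \Om^2_7 : d^*\beta = 0\} = \dim \Hh^2_7$. Since harmonic forms are in particular co-closed, the inclusion $\Hh^2_7 \subseteq \{\beta \in \Om^2_7 : d^*\beta = 0\}$ is immediate, and an inclusion of finite-dimensional vector spaces of equal dimension is an equality, which proves the lemma.
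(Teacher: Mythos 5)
The overall strategy — computing $\dim\{\beta \in \Om^2_7 : d^*\beta = 0\}$ from ${\rm ind}\, D(\flat_\Phi) = -\hat A_2$ together with \eqref{eq:Ahat}, then comparing with $\Hh^2_7$ via the obvious inclusion and finite dimensionality — is exactly the paper's, merely written in terms of $\ker D(\flat_\Phi)$ and $\ker D(\flat_\Phi)^*$ rather than $H^1(\flat_\Phi)$ and $H^2(\flat_\Phi)$, and your bookkeeping on the $\ker D^*$ side is correct. However, the one substantive step you flag, namely $\ker D(\flat_\Phi) \cong \i\,(\ker d \cap \ker d^*)$ of dimension $b^1$ (equivalently, $\pi^2_7(d\alpha) = 0$ and $d^*\alpha = 0$ force $d\alpha = 0$), is only gestured at, and neither of your two suggested routes fills it. The references you cite establish \eqref{eq:Ahat} itself, not the statement you need; since your argument already invokes \eqref{eq:Ahat} independently, quoting them here is at best indirect and at worst circular. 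Your alternative "Weitzenbock-type" sketch is also misdescribed: $d^*\alpha$, $\pi^2_7(d\alpha)$ and $\pi^2_{21}(d\alpha)$ lie in distinct irreducible summands ($W_1$, $W_7$, $W_{21}$) of $\Om^1\otimes\Om^1$, so there is no zeroth-order algebraic identity expressing one in terms of the others; a genuine Bochner argument would necessarily be second order.

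The paper closes this gap with a short Stokes argument that, in fact, does not even need $d^*\alpha = 0$: if $\pi^2_7(d\alpha) = 0$ then $d\alpha \in \Om^2_{21}$, so $\Phi\wedge d\alpha = -*d\alpha$, and since $d\Phi = 0$ (torsion-freeness) this gives $d(\Phi\wedge\alpha) = \Phi\wedge d\alpha = -*d\alpha$. Then
\[
\|d\alpha\|_{L^2}^2 = \int_X d\alpha\wedge *d\alpha = -\int_X d\alpha\wedge d(\Phi\wedge\alpha) = -\int_X d(d\alpha\wedge\Phi\wedge\alpha) = 0,
\]
hence $d\alpha = 0$, so $\ker(\pi^2_7\circ d)|_{\Om^1} = Z^1$, and intersecting with $\ker d^*$ gives the harmonic $1$-forms, of dimension $b^1$. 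Insert this step and your proof is complete and essentially coincides with the paper's.
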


\begin{proof}
By \eqref{eq:Ahat}, 
${\rm ind} D (\flat_\Phi) = - \hat A_2$ is equivalent to 
$$
\dim H^1(\flat_\Phi) - \dim H^2(\flat_\Phi) = b^1-\dim \Hh^2_7. 
$$
We first show that $H^1(\flat_\Phi)= H^1_{dR}$, 
which implies $\dim H^1(\flat_\Phi)= b^1$. 
Suppose that $\alpha \in \ker (\pi^2_7 \circ d)$. 
Then, $d \alpha \in \Om^2_{21}$, and hence, 
$d(\Phi \wedge \alpha) = \Phi \wedge d \alpha = - * d \alpha$. 
The Hodge decomposition implies that $d \alpha =0$, 
and hence, $H^1 (\flat_\Phi) = H^1_{dR}$.

Then, it follows that $\dim H^2(\flat_\Phi)= \dim \Hh^2_7$. 
By \eqref{eq:decomp om27 1}, we have 
$H^2 (\flat_\Phi) \cong \ker (\pi^2_7 \circ d)^*$. 
Since $\ker (\pi^2_7 \circ d)^* \supset \Hh^2_7$ 
by the proof of Lemma \ref{lem:decomp om27}, we obtain 
$$
\ker (\pi^2_7 \circ d)^* = \Hh^2_7. 
$$
Since $\beta \in \ker (\pi^2_7 \circ d)^*$ is equivalent to 
$0 
= \la \beta,  \pi^2_7 (d \alpha) \ra_{L^2}
= \la \beta,  d \alpha \ra_{L^2}
= \la d^* \beta,  \alpha \ra_{L^2} 
$
for any $\alpha \in \Om^1$, 
$\ker (\pi^2_7 \circ d)^*$ agrees with the space of 
coclosed forms in $\Om^2_7$. 
Hence, the proof is completed. 
\end{proof}

\subsection{The local structure of $\Mm'_{\Sp}$}\label{sec:local str MSpin7}
In this subsection, we consider the smoothness of $\Mm'_{\Sp}$. 
Fix any $\nabla \in \Ff_{\Sp}^{-1}(0)$. 
Set 
\[
\Om^1_{d^{*_\n}} 
= \{\, d^{*_\n} \mbox{-closed 1-forms on } X \,\}.
\]
By \eqref{eq:Gu orbit Spin7}, the tangent space of $\Gg_U$-orbit through $\n$ is 
identified with $\i d \Om^1$. 
Then, by the Hodge decomposition with respect to $g_\n$ and the same argument 
as in \cite{KY}, 
we can show that 
\[
\i \Om^1_{d^{*_\n}} \rightarrow \Aa'_0/\Gg_U, \quad
A \mapsto [\nabla+A \cdot \id_L]
\]
gives a homeomorphism from a neighborhood of $0 \in \i \Om^1_{d^{*_\n}}$ 
to that of $[\nabla] \in \Aa'_0/\Gg_U$. 
Hence, 
a neighborhood of $[\nabla]$ in $\Mm'_{\Sp}$ is homeomorphic to 
that of $0$ in 
\[
\Ss_\n = \{\, a \in \Om^1_{d^{*_\n}} \mid \Ff_{\Sp} (\nabla + \i a \cdot \id_L) = 0 \,\}. 
\]

\begin{theorem} \label{thm:moduli MSpin7}
If $H^2 \eqref{cpxfordDT} = \{\, 0 \,\}$ for $\nabla \in \Ff^{-1}_{\Sp}(0)$, 
the moduli space $\Mm'_{\Sp}$ is a finite dimensional smooth manifold near $[\nabla]$. 
If $\Phi$ is torsion-free, its dimension is $b^1$, 
where $b^1$ is the first Betti number.
\end{theorem}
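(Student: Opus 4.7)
The plan is to realize the local model $\Ss_\n$ as the zero set of a smooth Fredholm map between Banach spaces and apply the implicit function theorem. I will complete $\i\Om^1$ and $\i\Vv$ to Sobolev spaces $L^p_{k+1}$ and $L^p_k$ with $k$ large enough that $L^p_k \hookrightarrow C^0$ and the multiplication theorem handles the cubic nonlinearity $F_\n^3$ appearing in $\Ff_\Sp$; since $F_\n$ depends polynomially on the connection and $\Ff_\Sp$ uses only pointwise algebraic operations, Hodge stars, and wedge products, the map $\Ff_\Sp$ extends smoothly between these completions. Restricting to the slice $\i\Om^1_{d^{*_\n}}$ then parametrizes a neighborhood of $[\n]$ in $\Aa'_0/\Gg_U$, as explained in the paragraph preceding the statement.

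The key step is to identify the kernel of the restricted linearization $L := \delta_\n \Ff_\Sp|_{\i\Om^1_{d^{*_\n}}}$ with $H^1\eqref{cpxfordDT}$, and its cokernel with $H^2\eqref{cpxfordDT}$. The Hodge decomposition for $g_\n$ yields $\i\Om^1 = \i d\Om^0 \oplus \i\Om^1_{d^{*_\n}}$ on the compact connected manifold $X$, since $df \in \ker d^{*_\n}$ forces $\Delta_{g_\n} f = 0$ and hence $f$ constant. This immediately gives a bijection between $d^{*_\n}$-closed solutions of $\delta_\n \Ff_\Sp(a)=0$ and classes in $H^1\eqref{cpxfordDT} = \ker \delta_\n \Ff_\Sp / \i d\Om^0$, so $\ker L \cong H^1\eqref{cpxfordDT}$. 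For the cokernel, the complex property $\delta_\n \Ff_\Sp \circ d = 0$ together with the same decomposition gives $L(\i\Om^1_{d^{*_\n}}) = \delta_\n \Ff_\Sp(\i\Om^1)$, so $\i\Vv / L(\i\Om^1_{d^{*_\n}}) = H^2\eqref{cpxfordDT}$. Ellipticity of the auxiliary two-term operator in \eqref{cancpxfordDT2}, whose principal symbol agrees with that of $(L, d^{*_\n})$, then implies $L$ is Fredholm.

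Under the hypothesis $H^2\eqref{cpxfordDT} = \{0\}$ the linearization $L$ is surjective, so the implicit function theorem endows $\Ss_\n$ with a smooth submanifold structure near $0$ of dimension $\dim H^1\eqref{cpxfordDT}$. Elliptic bootstrapping for the coupled nonlinear elliptic system $\Ff_\Sp(\n + \i a \cdot \id_L) = 0$, $d^{*_\n} a = 0$ ensures Sobolev solutions are smooth, so this chart descends to a smooth manifold chart on $\Mm'_\Sp$ near $[\n]$. When $\Phi$ is torsion-free, Lemma \ref{lem:ind Phi} gives $\dim H^1\eqref{cpxfordDT} - \dim H^2\eqref{cpxfordDT} = b^1$, and the assumed vanishing of $H^2\eqref{cpxfordDT}$ forces $\dim H^1\eqref{cpxfordDT} = b^1$. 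The hard part will be managing the mismatch between the two metrics, since the gauge slice is cut out by $d^{*_\n}$ while the complex $\eqref{cpxfordDT}$ is defined using $d^*$ from $g$; this turns out harmless because $H^i\eqref{cpxfordDT}$ depends only on kernels and images of the differentials, independent of any inner product, and the Hodge decomposition is available for any Riemannian metric.
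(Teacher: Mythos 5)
Your proof follows essentially the same route as the paper's: pass to the $d^{*_\nabla}$-slice, complete to Sobolev spaces, apply the implicit function theorem once surjectivity follows from $H^2(\#_\nabla)=\{0\}$, recover smoothness via the overdetermined ellipticity of $(\delta_\nabla\Ff_{\Sp}, d^{*_\nabla})$ inherited from the ellipticity of $(\#_\nabla)$ at $\i\Om^1$, and conclude the dimension count from Lemma \ref{lem:ind Phi}. One small imprecision: the principal symbols of $d^{*}$ (from $g_\Phi$) and $d^{*_\nabla}$ (from $g_\nabla$) do not literally agree, since they depend on the metric; what is true (and what the paper uses) is that exactness of the symbol sequence of $(\#_\nabla)$ at $\i\Om^1$ makes $(\delta_\nabla\Ff_{\Sp}, d^{*'})$ overdetermined elliptic for \emph{any} Riemannian codifferential $d^{*'}$, since $\ker\sigma(\delta_\nabla\Ff_{\Sp})(\xi)=\R\xi$ never lies in $\xi^{\perp}$.
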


\begin{proof}
We only have to show that $\Ss_\n$ is a smooth manifold near $0$. 
If  $H^2 \eqref{cpxfordDT} = \{\, 0 \,\}$, we have 
$\mathop{\mathrm{Im}} (\delta_\n \Ff_{\Sp}) = \i \Vv$. 
Then, we can apply the implicit function theorem 
(after the Banach completion) to 
\[
\Om^1_{d^{*_\n}} \rightarrow \i \Vv, \quad 
a \mapsto \Ff_{\Sp} (\nabla + \i a \cdot \id_L). 
\]
Then, we see that 
$\Ss_\n$ is a smooth manifold near $0$. 
Its dimension is given by 
$\dim H^1 \eqref{cpxfordDT} = 1 + \dim \Hh^2_7 - \hat A_2$ 
by Proposition \ref{prop:expected dim Spin7}.
If $\Phi$ is torsion-free, it is $b^1$ by Lemma \ref{lem:ind Phi}.

As for the regularity of elements in $\Ff_{\Sp}^{-1}(0)$ in the Banach completion, 
note that 
\[
\Ss_\n = 
\{\, a \in \Om^1 \mid \Ff_{\Sp} (\nabla + \i a \cdot \id_L) = 0, \ d^{*_\n} a =0 \,\}. 
\]
Since 
\[
(\delta_\nabla \Ff_{\Sp}, d^{*_\n}): \i \Om^1
\rightarrow \i \Om^2_7 \oplus \Om^0  
\]
is overdetermined elliptic by 
the ellipticity of \eqref{cpxfordDT} at $\i \Om^1$, 
we see that $\Ss_\n$ is the solution space of 
an overdetermined elliptic equation around $0$. 
Thus, all solutions around $0$ are smooth. 
\end{proof}

\begin{remark}

If $\Phi$ and $\Phi_\n$ are torsion-free, 
$\Mm'_\Sp$ is smooth near $[\n]$ of dimension $b^1$. 
Indeed, if $\Phi_\n$ is torsion-free, 
by the proof of Lemma \ref{lem:vanish 27}, 
we see that 
$\dim H^2(\flat_{\Phi_\n}) = \dim \Hh^2_{7, \n}$, 
where $\Hh^2_{7, \n}$ is the space of harmonic forms with respect to $g_\n$ in $\Om^2_{7,\n}$. 
On a $\Sp$-manifold, $\dim \Hh^2_{7, \n}$  
is a topological invariant by \eqref{eq:Ahat}. 
Hence, we have $\dim \Hh^2_{7, \n} =\dim \Hh^2_7$. 
Then, by the second equation of Lemma \ref{lem:2cohom}, we see that 
$H^2 \eqref{cpxfordDT} = \{ 0 \}$.

In particular, if $\Phi$ is torsion-free and 
a connection $\n$ is flat, which is obviously a $\Sp$-dDT connection 
satisfying $* F_\n^4/24 \neq 1$, 
we have $\Phi_\n = \Phi$, and hence, $\Mm'_\Sp$ is a smooth manifold near $[\n]$ of dimension $b^1$. 
On the other hand, 
the space of flat Hermitian connections is $\n + \i Z^1$, 
where $Z^1$ is the space of closed 1-forms,  
and 
the tangent space of $\Gg_U$-orbit through $\n$ is 
identified with $d \Om^1$ by \eqref{eq:Gu orbit Spin7}. 
Thus, we have a $b^1$-dimensional family of flat Hermitian connections in $\Mm'_\Sp$. 
In particular, a neighborhood of a flat connection $\n$ in $\Mm'_\Sp$ consists of flat connections. 
\end{remark}

\subsection{Varying the $\Sp$-structure}\label{sec:moduli generic}
Let $X^8$ be a compact connected 8-manifold with a $\Sp$-structure $\Phi$ 
and $L \to X$ be a smooth complex line bundle with a Hermitian metric $h$. 
In Theorem \ref{thm:moduli MSpin7}, 
we gave the condition for the moduli space of $\Sp$-dDT connections $\n$ 
with $*F_\n^4/24 \neq 1$ to be smooth if it is not empty. 
In this subsection, 
we show that it is a smooth manifold (or empty) if we perturb $\Phi$ generically 
in some cases. 

Let $\Uu \subset \Om^4$ be an open set of the space of $\Sp$-structures 
containing the initial $\Sp$-structure $\Phi$. 
Define infinite dimensional vector bundles $\Ee \to \Uu$ 
and $\varpi: \Aa'_{0, \Uu} \to \Uu$ by 
$$
\Ee = \bigcup_{\Psi \in \Uu} \{\Psi\} \times \i \Vv_\Psi,  \qquad
\Aa'_{0, \Uu} = \bigcup_{\Psi \in \Uu} \{\Psi\} \times \{\,\nabla \in \Aa_0 \mid *_\Psi F_\n^4/24 \neq 1 \}, 
$$
where $\Vv_\Psi \subset \Om^2_{7, \Psi}$ is the space constructed in Lemma \ref{lem:decomp om27} 
for $\Psi \in \Uu$, 
$\Aa_0$ is the space of Hermitian connections of $(L,h)$ 
and $*_\Psi$ is the Hodge star induced from $\Psi \in \Uu$. 
Note that $\Aa'_{0, \Uu}$ is an open subset of $\Uu \times \Aa_0$ 
and $\Gg_U$ acts on $\Aa'_{0, \Uu}$ fiberwise.

For the rest of this subsection, we suppose that 
there exist $\Sp$-dDT connections $\n_\Psi$ with $*_\Psi F_{\n_\Psi}^4/24 \neq 1$ 
for any $\Psi \in \Uu$.  
By Lemma \ref{onimage}, 
we can define a section  
$\wFf : \Aa'_{0, \Uu} \to \varpi^* \Ee$, where $\varpi^* \Ee \to \Aa'_{0, \Uu}$ is the pullback of $\Ee \to \Uu$ by $\varpi$, 
by 
\begin{align*}
\wFf (\Psi, \n') 
&= 
\pi^2_{7, \Psi} \left( F_{\n'} + \frac{1}{6} *_\Psi F_{\n'}^3 \right) \\
&= 
\frac{1}{4}
\left(
F_{\n'} + \frac{1}{6} *_\Psi F_{\n'}^3 + 
*_\Psi \left( \left( F_{\n'} + \frac{1}{6} *_\Psi F_{\n'}^3  \right) \wedge \Psi \right) 
\right)
\end{align*}
and set 
$$
\widehat \Mm'_\Sp = \left \{\,(\Psi, [\n']) \in  \Aa'_{0, \Uu}/\Gg_U 
\mid \wFf (\Psi, \n') = 0\,\right\}. 
$$
For each $\Psi \in \Uu$, set 
$\mathcal{M}'_{\Sp, \Psi} = \left( \wFf^{-1}(0) \cap \varpi^{-1}(\Psi) \right)/\Gg_U$, 
which is the moduli space of $\Sp$-dDT connections 
with $*_\Psi F^4_\n/24 \neq 1$ for the $\Sp$-structure $\Psi$.

We first compute the linearization of $\wFf$. 
Recall the notation in Appendix \ref{app:notation}. 
Define $\ja:\Om^1 \otimes \Om^1 \to \Om^4, 
\jb (h):\Om^p \to \Om^p$ for $h \in \Ss^2$, 
$\jc: \Om^p \times \Om^p \to \Ss^2$ 
by \eqref{eq:j1}, \eqref{eq:j2}, \eqref{eq:j3}
for the initial $\Sp$-structure $\Phi$, respectively. 
For simplicity, set 
$g=g_\Phi, *=*_\Phi, \Vv=\Vv_\Phi, \Om^k_\l=\Om^k_{\l, \Phi}$ 
and $\pi^k_\l = \pi^k_{\l, \Phi}: \Om^k \to \Om^k_\l$ 
for the initial $\Sp$-structure $\Phi$. 

\begin{lemma} \label{lem:lin wFf}
Set $A =h + \beta  \in \Ss^2 \oplus \Om^2_7$. 
Then, the linearization of $\wFf$ at $(\Phi, \n) \in \wFf^{-1}(0)$ 
in the direction of $\ja (A)$ is given by 
\begin{align*}
&4 \delta_{(\Phi, \n)} \wFf (\ja (A), 0) \\
=&
*\left( \left( F_\n + \frac{1}{6} * F_\n^3 \right) \wedge \ja (A) \right) 
+ {\rm tr} (h^\sharp)  \left(  -F_\nabla + \frac{1}{6} * \left( (* F_\nabla^3) \wedge \Phi \right) \right) \\
&+2 * \left( \jb (h) (* F_\n) - \frac{1}{6} * \left( \jb (h) (F^3_\n) \right) \wedge \Phi \right).  
\end{align*}
\end{lemma}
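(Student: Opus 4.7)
The plan is to pick any smooth path $\{\Phi_t\}$ in $\Uu$ with $\Phi_0 = \Phi$ and $\frac{d\Phi_t}{dt}\big|_{t=0} = \ja(A)$, then differentiate $\wFf(\Phi_t, \n)$ at $t = 0$ using the product rule together with Lemma \ref{lem:diff Hodge}. Because $\wFf(\Phi, \n) = 0$, the result is a well-defined element of $\i\Om^2$ representing $\delta_{(\Phi, \n)}\wFf(\ja(A), 0)$, independent of any choice of connection on $\varpi^*\Ee$.

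Abbreviate $G_t := F_\n + \frac{1}{6} *_t F_\n^3$ and $G := G_0$, so that
\[
4\,\wFf(\Phi_t, \n) = G_t + *_t(G_t \wedge \Phi_t).
\]
The hypothesis $(\Phi, \n) \in \wFf^{-1}(0)$ gives $\pi^2_7(G) = 0$, i.e.\ $G \in \Om^2_{21}$, and by the characterization of $\Lambda^2_{21} W^*$ in Subsection \ref{sec:Spin7 geometry} this forces $G \wedge \Phi = -\,*G$, whence $*(G \wedge \Phi) = -G$ and $*G = *F_\n + \frac{1}{6} F_\n^3$. These identities will drive the cancellations that collapse the derivative to the stated form.

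Setting $H := \frac{dG_t}{dt}\big|_{t=0}$ and $T := \frac{d*_t}{dt}\big|_{t=0}$, the product rule applied to the four $t$-dependent factors yields
\[
4\,\delta_{(\Phi,\n)}\wFf(\ja(A), 0) = H + T(G \wedge \Phi) + *(H \wedge \Phi) + *(G \wedge \ja(A)),
\]
and Lemma \ref{lem:diff Hodge} gives $H = \frac{1}{6}*\bigl(-2\jb(h)(F_\n^3) + {\rm tr}(h^\sharp) F_\n^3\bigr)$ together with $T(\beta) = *\bigl(-2\jb(h)(\beta) + {\rm tr}(h^\sharp)\beta\bigr)$ for any $\beta$. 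Grouping the expansion by the type of tangent datum produces three pieces: (i) the $\ja(A)$-piece is $*(G \wedge \ja(A))$, matching the first term of the statement; (ii) in the ${\rm tr}(h^\sharp)$-piece, substituting $*(G \wedge \Phi) = -G = -F_\n - \frac{1}{6}*F_\n^3$ cancels the two $\frac{1}{6}{\rm tr}(h^\sharp)*F_\n^3$ contributions and leaves ${\rm tr}(h^\sharp)\bigl(-F_\n + \frac{1}{6}*((*F_\n^3)\wedge\Phi)\bigr)$; (iii) in the $\jb(h)$-piece, the substitution $-2*\jb(h)(G\wedge\Phi) = 2*\jb(h)(*G) = 2*\jb(h)(*F_\n) + \frac{1}{3}*\jb(h)(F_\n^3)$ cancels the $-\frac{1}{3}*\jb(h)(F_\n^3)$ coming from $H$, leaving $2\bigl(*\jb(h)(*F_\n) - \frac{1}{6}*(*\jb(h)(F_\n^3)\wedge\Phi)\bigr)$, which is the third claimed term. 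The main obstacle is purely the bookkeeping in the expansion; the essential conceptual input is the $\Sp$-dDT relation $G \wedge \Phi = -\,*G$, which drives every cancellation.
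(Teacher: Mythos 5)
Your proposal is correct and follows essentially the same route as the paper's proof: pick a path $\Phi_t$ with $\dot\Phi_0=\ja(A)$, differentiate using Lemma \ref{lem:diff Hodge} and the product rule, then invoke the $\Sp$-dDT condition at the base point to collapse the result. The only cosmetic difference is that you phrase the key cancellation as ``$G\in\Om^2_{21}$, hence $G\wedge\Phi=-*G$'' rather than as the defining equation $\frac{1}{6}*F_\n^3+*(G\wedge\Phi)=-F_\n$, but these are the same identity, and your grouping of the expansion by tangent datum matches the paper's computation term for term.
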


\begin{proof}
Let $\{ \Phi_t \}_{t \in (- \epsilon, \epsilon)}$ be 
a path in $\Uu$ such that 
$
\Phi_0 = \Phi
$ and
$d \Phi_t/dt |_{t=0} = \ja (A)$. 
Recall that 
$$
\left. \frac{d}{dt} *_{\Phi_t} \beta \right|_{t=0} = * \left( -2 \jb (h) (\beta) + {\rm tr}(h^\sharp) \beta \right)  
$$
for $\beta \in \Om^p$ by Lemma \ref{lem:diff Hodge}. 
Thus, we have 
\begin{align*}
&4 \delta_{(\Phi, \n)} \wFf (\ja (A), 0) \\
=&
4 \left. \frac{d}{dt} \wFf (\Phi_t, \n) \right|_{t=0} \\
=&\frac{1}{6} * \left( -2 \jb (h) (F^3_\n) + {\rm tr}(h^\sharp) F^3_\n \right) \\
&+
* \left( -2 \jb (h) \left( \left( F_\nabla + \frac{1}{6} * F_\nabla^3  \right) \wedge \Phi \right) 
+ {\rm tr}(h^\sharp) \left( F_\nabla + \frac{1}{6} * F_\nabla^3  \right) \wedge \Phi \right)\\
&+
*\left( \left( F_\n + \frac{1}{6} * F_\n^3 \right) \wedge \ja (A) \right)
+
\frac{1}{6} * \left( *\left( -2 \jb (h) (F^3_\n) + {\rm tr}(h^\sharp) F^3_\n \right) \wedge \Phi \right)\\
=&*\left( \left( F_\n + \frac{1}{6} * F_\n^3 \right) \wedge \ja (A) \right) \\
&+ 
{\rm tr}(h^\sharp) 
\left( \frac{1}{6} * F^3_\n 
+ * \left( \left( F_\nabla + \frac{1}{6} * F_\nabla^3  \right) \wedge \Phi \right) 
+ \frac{1}{6} * \left( (* F_\nabla^3) \wedge \Phi \right) \right) \\
&-
2 * \left( \jb (h) \left( \frac{1}{6} F^3_\n 
+ \left( F_\nabla + \frac{1}{6} * F_\nabla^3  \right) \wedge \Phi \right) 
+ \frac{1}{6} * \left( \jb (h) (F^3_\n) \right) \wedge \Phi
\right). 
\end{align*}
Since $\n$ is a $\Sp$-dDT connection for $\Phi$, we have 
$$
\frac{1}{6} * F^3_\n 
+ * \left( \left( F_\nabla + \frac{1}{6} * F_\nabla^3  \right) \wedge \Phi \right) 
= -F_\n. 
$$
Then, the proof is completed. 
\end{proof}

\begin{proposition} \label{prop:L2 oncomp}
Let $\n$ be a $\Sp$-dDT connection with respect to the initial $\Sp$-structure $\Phi$. 
Let 
$$
D_{(\Phi, \n)} \wFf: 
T_{(\Phi, \n)} \Aa'_{0, \Uu} = 
\Om^4_1 \oplus \Om^4_7 \oplus \Om^4_{35} \oplus \i \Om^1 
\to \i \Vv
$$ 
be the composition of the linearization of 
$\delta_{(\Phi, \n)} \wFf: T_{(\Phi, \n)} \Aa'_{0, \Uu} \to T_{(\Phi,0)} \Ee$ 
with the projection
$$
T_{(\Phi,0)} \Ee = T_\Phi \Uu \oplus \Ee_\Phi \to \Ee_\Phi = \i \Vv_\Phi = \i \Vv. 
$$ 
Denote by  
$\left( \Im D_{(\Phi, \n)} \wFf  \right)^\perp$ 
the space of elements in $\i \Vv$ orthogonal to 
the image of $D_{(\Phi, \n)} \wFf$  
with respect to the $L^2$ inner product induced from $g=g_\Phi$.  
\begin{enumerate}
\item 
The map $D_{(\Phi, \n)} \wFf$ is surjective 
if and only if 
$$\left( \Im D_{(\Phi, \n)} \wFf  \right)^\perp = \{\, 0 \,\}.$$ 

\item
For $\gamma \in \Vv$, 
$\i \gamma \in \left( \Im D_{(\Phi, \n)} \wFf  \right)^\perp$ 
if and only if 
\begin{align}
d^* \left( \gamma + * \left( \frac{1}{2} F^2_\n \wedge \gamma \right) \right) &=0, 
\label{eq:L2 on1} \\
\pi^4_1 (F_\n \wedge \gamma) &=0, \label{eq:L2 on2}\\
\pi^4_7 \left( \left( 6F_\n +*F_\n^3 \right) \wedge \gamma \right) &=0, \label{eq:L2 on3}\\
\pi^4_{35} \left( \left(6 F_\n -* F_\n^3 \right) \wedge \gamma \right) &=0. \label{eq:L2 on4}
\end{align}
\end{enumerate}
\end{proposition}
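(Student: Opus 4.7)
The strategy is to decompose each tangent vector to $\Aa'_{0,\Uu}$ at $(\Phi,\n)$ as $(\ja(A),\i b)$ with $A=h+\beta\in\Ss^2\oplus\Om^2_7$ and $b\in\Om^1$, and to analyse the three resulting contributions separately. For Part (1), $\Im D_{(\Phi,\n)} \wFf$ contains the image of the connection-direction map $\delta_\n \Ff_\Sp:\i\Om^1\to\i\Vv$, whose cokernel is the finite-dimensional space $H^2\eqref{cpxfordDT}$ by the ellipticity of \eqref{cancpxfordDT2}. Therefore $\Im D_{(\Phi,\n)}\wFf$ is a subspace of $\i\Vv$ of finite codimension, hence closed, and the standard Hilbert-space orthogonal decomposition gives surjectivity if and only if $(\Im D_{(\Phi,\n)}\wFf)^\perp=\{0\}$.

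For Part (2) I treat the three directions in turn. The $b$-direction gives $\delta_\n\Ff_\Sp(\i b)=\i\pi^2_7\bigl(db+*(F^2_\n\wedge db/2)\bigr)$; since $\gamma\in\Om^2_7$ one may drop the outer projection in the pairing. The operator $T:\Om^2\to\Om^2$, $T(\eta)=*(F^2_\n\wedge\eta/2)$, is $L^2$-self-adjoint, since $\la T(\eta_1),\eta_2\ra_{L^2}=\tfrac12\int F^2_\n\wedge\eta_1\wedge\eta_2$ is manifestly symmetric in the two $2$-forms. Integration by parts then yields $d^*\bigl(\gamma+*(F^2_\n\wedge\gamma/2)\bigr)=0$, which is \eqref{eq:L2 on1}. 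For the $\beta$-direction ($h=0$, $\beta\in\Om^2_7$), only Term 1 in Lemma \ref{lem:lin wFf} survives; rewriting the pairing and using the self-duality $*\ja(\beta)=\ja(\beta)$ together with the isomorphism $\ja|_{\Om^2_7}:\Om^2_7\to\Om^4_7$ reduces the vanishing condition for all $\beta$ to $\pi^4_7((6F_\n+*F^3_\n)\wedge\gamma)=0$, i.e.~\eqref{eq:L2 on3}.

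The $h$-direction, which is the main obstacle, is further split via $h=\tfrac18\mathrm{tr}(h^\sharp)\,g+h_0$ with $h_0\in\Ss^2_0$. For the trace part, using $\ja(g)=4\Phi$ and $\jb(g)(\eta)=p\,\eta$ on $p$-forms, together with $F_\n+*F^3_\n/6\in\Om^2_{21}$ (from $\pi^2_7(F_\n)=-\pi^2_7(*F^3_\n)/6$), Term 1 vanishes against $\gamma\in\Om^2_7$ while Terms 2 and 3 combine pointwise to a nonzero multiple of $\la\gamma,F_\n\ra$. Since $F_\n\wedge\gamma\wedge\Phi=3\la F_\n,\gamma\ra\,\vol$ forces $\pi^4_1(F_\n\wedge\gamma)$ to be a multiple of $\la F_\n,\gamma\ra\,\Phi$, this vanishing is exactly \eqref{eq:L2 on2}. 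For the traceless part $h_0$, Term 2 drops out; Term 1 simplifies to $-\la\ja(h_0),(F_\n+*F^3_\n/6)\wedge\gamma\ra$ via the anti-self-duality of $\Om^4_{35}$; and Term 3 is handled by applying Lemma \ref{lem:j2ad} to rewrite $\la\gamma,*\jb(h_0)(\alpha)\ra$ as $\tfrac18\la\ja(h_0),(\ja\circ\jc)(\alpha\otimes *\gamma)\ra$ for $\alpha=*F_\n$ and $\alpha=F^3_\n=*(*F^3_\n)$, and then Lemma \ref{lem:pi35} to evaluate the arising pairings. The $\Sp$-dDT identity $\pi^2_7(*F^3_\n)=-6\pi^2_7(F_\n)$, the orthogonality $\ja(h_0)\perp\Phi$, and the vanishing $\pi^4_{35}(\Om^2_7\wedge\Om^2_7)=0$ coming from \eqref{eq:F2decomp} together cancel all $\Phi$-contributions and produce the total $-\tfrac13\la\ja(h_0),(6F_\n-*F^3_\n)\wedge\gamma\ra$, the sign flip relative to \eqref{eq:L2 on3} being a direct consequence of $*\ja(h_0)=-\ja(h_0)$. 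Vanishing for all $h_0\in\Ss^2_0$ then yields \eqref{eq:L2 on4}. The delicate reconciliation of coefficients between Terms 1 and 3 in this final step is where the bulk of the calculation lies.
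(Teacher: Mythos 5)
Your proposal follows essentially the same strategy as the paper's proof: decompose tangent vectors as $(\ja(A),\i b)$ with $A=h+\beta$, split $h$ into trace and traceless parts, and compute the $L^2$-pairing against $\i\gamma$ in each direction using Lemma \ref{lem:lin wFf}, Lemma \ref{lem:j2ad}, Lemma \ref{lem:pi35}, and the algebraic wedge decompositions from \eqref{eq:F2decomp}. Your observation that Term 1 of Lemma \ref{lem:lin wFf} already vanishes against $\gamma\in\Om^2_7$ in the trace direction (because $F_\n+\tfrac16*F^3_\n\in\Om^2_{21}$ when $\n$ is $\Sp$-dDT) is a small simplification: the paper instead carries all three terms through to the pointwise identity $4\delta_{(\Phi,\n)}\wFf(\ja(fg),0)=16f\pi^2_7(F_\n)$ before pairing. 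For Part (1) your phrasing ``standard Hilbert-space orthogonal decomposition'' is a little loose, since $\i\Vv$ consists of smooth forms and is not itself a Hilbert space; the closedness and finite codimension of the image hinge on the ellipticity-based decomposition $\i\Om^2_7=U_1\oplus\Im D_1$ (and the paper constructs the finite-dimensional orthogonal complement explicitly from there), but you do invoke ellipticity and the argument is correct in substance. Both approaches are the same in essence; yours is a condensed account of the paper's.
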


\begin{proof}
We first prove (1). The proof is an analogue of \cite{KY}. 
Define 
$D_1: \i \Om^1 \to \i \Vv$ and 
$D_2: \Om^4_1 \oplus \Om^4_7 \oplus \Om^4_{35}  
\to \i \Vv$ by 
\begin{align} \label{eq:D1D2}
D_1 = D_{(\Phi, \n)} \wFf|_{\{ 0 \} \times \i \Om^1}, \quad 
D_2 = D_{(\Phi, \n)} \wFf|_{\Om^4_1 \oplus \Om^4_7 \oplus \Om^4_{35} \times \{ 0 \}}. 
\end{align}
Then, $\Im D_{(\Phi, \n)} \wFf = \Im D_1 + \Im D_2$. 
By \eqref{eq:commdiag} and 
the ellipticity of the canonical complex,  
there exists a finite dimensional subspace $U_1 \subset \i \Om^2_7$ 
such that 
\[
 \i \Om^2_7 = U_1 \oplus \Im D_1, 
\]
which is an orthogonal decomposition 
with respect to the $L^2$ inner product induced from $g$. 
By Lemma \ref{onimage}, we have $\Im D_1 \subset \i \Vv$. 
Set $U_2=U_1 \cap \i \Vv$.  
Then, 
\begin{align} \label{eq:L2 oncomp 1}
\i \Vv = U_2 \oplus \Im D_1. 
\end{align}
Since $U_2$ is finite dimensional, there is 
the orthogonal complement of $U_2 \cap (\Im D_1 + \Im D_2)$ in $U_2$ 
with respect to the $L^2$ inner product induced from $g$. 
Denote it by $U_3$. 
Then, 
since $\Im D_1 + \Im D_2 = (U_2 \cap (\Im D_1 + \Im D_2)) \oplus \Im D_1$, 
we obtain the following orthogonal decomposition
\[
\i \Vv = U_3 \oplus (\Im D_1 + \Im D_2) 
\]
with respect to the $L^2$ inner product induced from $g$. 
By construction, 
$U_3 = \left( \Im D_{(\Phi, \n)} \wFf  \right)^\perp$, 
and hence, we obtain (1).

Next, we prove (2). 
By the decomposition $S^2 T^*X = \R g \oplus S^2_0 T^*X$, 
$\i \gamma \in \left( \Im D_{(\Phi, \n)} \wFf  \right)^\perp$ 
if and only if 
\begin{align}
\la \delta_{(\Phi, \n)} \wFf (0, \i b), \i \gamma \ra_{L^2}&=0, 
\label{eq:L20 on1} \\
\la \delta_{(\Phi, \n)} \wFf (\ja (f g), 0), \i \gamma \ra_{L^2}&=0, \label{eq:L20 on2}\\
\la \delta_{(\Phi, \n)} \wFf (\ja (\beta), 0), \i \gamma \ra_{L^2}&=0, \label{eq:L20 on3}\\
\la \delta_{(\Phi, \n)} \wFf (\ja (h), 0), \i \gamma \ra_{L^2}&=0 \label{eq:L20 on4}
\end{align}
for any $b \in \Om^1, f \in \Om^0, \beta \in \Om^2_7$ and $h \in \Ss^2_0$, 
where $\la \,\cdot\,, \,\cdot\, \ra_{L^2}$ is the $L^2$ inner product induced from $g$.  
We rewrite these equations. 

First, we rewrite \eqref{eq:L20 on1}. 
Since 
$$
\delta_{(\Phi, \n)} \wFf (0, \i b) = \i \pi^2_7 \left( db + * \left( \frac{1}{2} F^2_\n \wedge db \right) \right), 
$$
we have 
\begin{align*}
\la \delta_{(\Phi, \n)} \wFf (0, \i b), \i \gamma \ra_{L^2}
=&
\la db, \gamma \ra_{L^2} 
+ \left \la db, * \left( \frac{1}{2} F^2_\n \wedge \gamma \right) \right \ra_{L^2} \\
=&
\left \la b, d^* \left( \gamma + * \left( \frac{1}{2} F^2_\n \wedge \gamma \right) \right) \right \ra_{L^2}, 
\end{align*}
which implies that \eqref{eq:L20 on1} is equivalent to \eqref{eq:L2 on1}. 

Next, we rewrite \eqref{eq:L20 on2}. 
Since 
$$
{\rm tr}(f g^\sharp) = 8f, \qquad \jb(fg) |_{\Om^p} = pf \id_{\Om^p}, \qquad \ja (f g) = 4 f \Phi, 
$$
Lemma \ref{lem:lin wFf} implies that 
\begin{align*}
&4 \delta_{(\Phi, \n)} \wFf (\ja (fg), 0) \\
=&
4f *\left( \left( F_\n + \frac{1}{6} * F_\n^3 \right) \wedge \Phi \right) 
+ 8f \left(  -F_\nabla + \frac{1}{6} * \left( (* F_\nabla^3) \wedge \Phi \right) \right) \\
&+2f * \left( 6 (* F_\n) - (* F^3_\n) \wedge \Phi \right) \\
=&
4f *(F_\n \wedge \Phi) +4f F_\n \\
=&
16 f \pi^2_7 (F_\n). 
\end{align*}
Thus, since $\gamma \in \Vv \subset \Om^2_7$, we have 
\begin{align*}
&4 \la \delta_{(\Phi, \n)} \wFf (\ja (f g), 0), \i \gamma \ra_{L^2} \\
=&
16 \la f (-\i F_\n), \gamma \ra_{L^2} \\
=&
\frac{16}{3} \int_X f (-\i F_\n) \wedge \gamma \wedge \Phi 
=
\frac{16}{3} \la -\i F_\n \wedge \gamma, f \Phi \ra_{L^2}, 
\end{align*}
which implies that \eqref{eq:L20 on2} is equivalent to \eqref{eq:L2 on2}.

Next, we rewrite \eqref{eq:L20 on3}. 
By Lemma \ref{lem:lin wFf}, we have 
\begin{align*}
\la 4 \delta_{(\Phi, \n)} \wFf (\ja (\beta), 0), \i \gamma \ra_{L^2} 
=&
\left \la * \left( \left( F_\nabla + \frac{1}{6} * F_\nabla^3  \right) \wedge \ja (\beta) \right), \i \gamma \right \ra_{L^2} \\
=&
\left \la -\i  \left( F_\nabla + \frac{1}{6} * F_\nabla^3  \right) \wedge \gamma, \ja (\beta) \right \ra_{L^2}, 
\end{align*}
which implies that \eqref{eq:L20 on3} is equivalent to \eqref{eq:L2 on3}.

Finally, we rewrite \eqref{eq:L20 on4}. 
By Lemma \ref{lem:lin wFf}, we have 
\begin{align*}
&\la 4 \delta_{(\Phi, \n)} \wFf (\ja (h), 0), \i \gamma \ra_{L^2} \\
=&
\left \la 
* \left( \left( F_\nabla + \frac{1}{6} * F_\nabla^3  \right) \wedge \ja (h) \right) \right. \\
&\left. +2 * \left( \jb (h) (* F_\n) - \frac{1}{6} * \left( \jb (h) (F^3_\n) \right) \wedge \Phi \right), 
\i \gamma \right \ra_{L^2}. 
\end{align*}
We compute 
\begin{align*}
&\left \la * \left( \left( F_\nabla + \frac{1}{6} * F_\nabla^3  \right) \wedge \ja (h) \right), \i \gamma \right \ra_{L^2} \\
=&
\left \la -\i * \left( \left( F_\nabla + \frac{1}{6} * F_\nabla^3  \right) \wedge \ja (h) \right), \gamma \right \ra_{L^2} \\
=&
\left \la \ja (h), \i \left( F_\nabla + \frac{1}{6} * F_\nabla^3  \right) \wedge \gamma \right \ra_{L^2}
\end{align*}
since $\ja (h) \in \Om^4_{35}$ is anti self dual. 
By Lemmas \ref{lem:j2ad} and \ref{lem:pi35} and \eqref{eq:F2decomp}, we have 
\begin{align*}
\left \la 2 * \left( \jb (h) (* F_\n) \right), \i \gamma \right \ra_{L^2} 
=&
\left \la -2 \i \jb (h) (* F_\n), * \gamma \right \ra_{L^2} \\
=&
\frac{1}{4} \left \la \ja (h) , (\ja \circ \jc) ( -\i *F_\n \otimes * \gamma) \right \ra_{L^2} \\
=&
\frac{1}{4} \left \la \ja (h) , (\ja \circ \jc) ( -\i * \pi^2_{21}(F_\n) \otimes * \gamma) \right \ra_{L^2} \\
=&
\left \la \ja (h) , \pi^4_{35} ( \pi^2_{21}(\i F_\n) \wedge \gamma) \right \ra_{L^2} \\
=&
\left \la \ja (h) , \pi^4_{35} ( \i F_\n \wedge \gamma) \right \ra_{L^2}. 
\end{align*}

Similarly, we compute 
\begin{align*}
\left \la * \left( - \frac{1}{3} * \left( \jb (h) (F^3_\n) \right) \wedge \Phi \right), 
\i \gamma \right \ra_{L^2} 
=&
\left \la \frac{\i}{3} * \left( \jb (h) (F^3_\n) \right) \wedge \Phi, 
* \gamma \right \ra_{L^2}\\
=&
\int_X 
\frac{\i}{3} * \left( \jb (h) (F^3_\n) \right) \wedge \Phi \wedge \gamma. 
\end{align*}
Since $\Phi \wedge \gamma = 3 * \gamma$, it follows that 
\begin{align*}
\int_X 
\i * \left( \jb (h) (F^3_\n) \right) \wedge * \gamma 
=&
\la \i \jb (h) (F^3_\n), * \gamma \ra_{L^2} \\
=&
\frac{1}{8} \left \la \ja (h), (\ja \circ \jc) (\i F^3_\n \otimes * \gamma) \right \ra_{L^2} \\
=&
\frac{1}{8} \left \la \ja (h), (\ja \circ \jc) (\i \pi^6_{21}(F^3_\n) \otimes * \gamma) \right \ra_{L^2} \\
=&
- \frac{1}{2} \left \la \ja (h), 
\pi^4_{35} \left( \i \pi^2_{21}(* F^3_\n) \wedge \gamma \right) \right \ra_{L^2} \\
=&
- \frac{1}{2} \left \la \ja (h), 
\pi^4_{35} \left( \i * F^3_\n \wedge \gamma \right) \right \ra_{L^2}. 
\end{align*}
Hence, we obtain 
\begin{align*}
&\la 4 \delta_{(\Phi, \n)} \wFf (\ja (h), 0), \i \gamma \ra_{L^2} \\
=&
\left \la 
\ja (h),  
\pi^4_{35} \left( \left( 2 \i F_\n -\frac{\i}{3} * F^3_\n \right) \wedge \gamma \right) 
\right \ra_{L^2},  
\end{align*}
which implies that \eqref{eq:L20 on4} is equivalent to \eqref{eq:L2 on4}.
\end{proof}

By Proposition \ref{prop:L2 oncomp}, we obtain the following.

\begin{theorem}\label{thm:moduli generic}
Let $X^8$ be a compact connected 8-manifold with a $\Sp$-structure $\Phi$ 
and $L \to X$ be a smooth complex line bundle with a Hermitian metric $h$. 
Let $\n$ be a $\Sp$-dDT connection for $\Phi$ with $* F_\n^4/24 \neq 1$. 
Suppose that there exist $\Sp$-dDT connections $\n_\Psi$
with $*_\Psi F_{\n_\Psi}^4/24 \neq 1$
for 
every $\Sp$-structure $\Psi$ sufficiently close to $\Phi$. 
If 
\begin{enumerate}
\item
$F_\n \neq 0$ on a dense set of $X$, or 
\item
$\n$ is flat and the $\Sp$-structure $\Phi$ is torsion-free, 
\end{enumerate}
then for every generic $\Sp$-structure $\Psi$ close to $\Phi$, 
the subset of elements of the moduli space $\mathcal{M}'_{\Sp, \Psi}$ 
of $\Sp$-dDT connections $\n'$ for $\Psi$ with $*_\Psi F^4_{\n'}/24 \neq 1$ 
close to $[\n]$ is a finite dimensional smooth manifold (or empty).  
\end{theorem}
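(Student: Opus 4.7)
The plan is to apply the Sard--Smale theorem to the universal moduli space. After passing to appropriate Sobolev completions, view $\wFf$ as a smooth section of a Banach vector bundle over $\Aa'_{0, \Uu}/\Gg_U$ and introduce the projection $\varpi: \widehat{\Mm}'_\Sp \to \Uu$, whose fiber over $\Psi$ is precisely $\Mm'_{\Sp, \Psi}$. Once I establish that (a) $\widehat{\Mm}'_\Sp$ is a smooth Banach manifold near $(\Phi, [\n])$, and (b) $\varpi$ is Fredholm there, Sard--Smale yields a dense set of regular values $\Psi \in \Uu$ whose preimages are smooth finite-dimensional manifolds (or empty). Overdetermined elliptic regularity, applied exactly as in the proof of Theorem \ref{thm:moduli MSpin7}, will then promote Sobolev solutions to smooth connections.

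The Fredholm property (b) follows from the commutative diagram \eqref{eq:commdiag} and the ellipticity of the canonical complex, together with the fact that the $\Sp$-structure direction contributes only a finite-dimensional piece on top of the already Fredholm operator $D_1$ of \eqref{eq:D1D2}. So the substantive content lies in (a), i.e.\ surjectivity of the linearization $D_{(\Phi, \n)} \wFf$. By Proposition \ref{prop:L2 oncomp}, this reduces to showing that any $\gamma \in \Vv$ satisfying equations \eqref{eq:L2 on1}--\eqref{eq:L2 on4} must vanish.

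Case (2) is easy: when $\n$ is flat, $F_\n = 0$ makes \eqref{eq:L2 on2}--\eqref{eq:L2 on4} automatic while \eqref{eq:L2 on1} collapses to $d^* \gamma = 0$. Torsion-freeness of $\Phi$ together with Lemma \ref{lem:vanish 27} then forces $\gamma \in \Hh^2_7$, but $\gamma \in \Vv$ is $L^2$-orthogonal to $\Hh^2_7$ by Lemma \ref{lem:decomp om27}, so $\gamma = 0$.

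Case (1) will be the main obstacle. I plan to argue pointwise on the open dense set $\{F_\n \neq 0\}$ and then extend to $X$ by continuity. At such a point, decompose $F_\n = \beta + \delta$ with $\beta \in \Lambda^2_7 T^*X$ and $\delta \in \Lambda^2_{21} T^*X$, and use the $\Sp$-dDT identity $\pi^2_7(6 F_\n + * F_\n^3) = 0$, which gives $\pi^2_7(* F_\n^3) = -6 \beta$, to rewrite the four conditions as purely algebraic constraints on $\gamma \in \Lambda^2_7 T^*X$: equation \eqref{eq:L2 on2} becomes $\langle \beta, \gamma \rangle = 0$, while the $\Om^4_7$- and $\Om^4_{35}$-constraints reduce via \eqref{eq:F2decomp} to wedge conditions coupling $\gamma$ with $\delta$ and with $\pi^2_{21}(* F_\n^3)$. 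The strategy is to bring $F_\n$ into the normal form of Lemma \ref{lem:2form cong} and then combine these constraints with the wedge nondegeneracy of Corollary \ref{cor:2form norm} and the $\Sp$-irreducibility of the summands in \eqref{eq:decomp Spin7} to eliminate each component of $\gamma$ in turn. The chief technical difficulty will be handling degenerate loci --- points where $\beta = 0$, or where the eigenvalues of $\delta$ coincide, causing some of the constraints to become linearly dependent --- so the endgame will likely require splitting into subcases based on the rank structure of $F_\n$ at $p$.
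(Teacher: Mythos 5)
Your outline is structurally the same as the paper's: reduce to surjectivity of $D_{(\Phi,\n)}\wFf$ via Proposition~\ref{prop:L2 oncomp}, handle case (2) by combining $d^*\gamma=0$ with Lemma~\ref{lem:vanish 27} and $\Hh^2_7\cap\Vv=\{0\}$ (this part is correct and complete), set up a universal moduli space, and run Sard--Smale on the projection to $\Uu$, with overdetermined-elliptic regularity (an open condition near $\Phi$) restoring smoothness. For the Fredholm/IFT bookkeeping you wave at the ellipticity of $D_1$; the paper makes this precise by explicitly constructing complementary spaces $\Ww,\Ww'$ with $D'|_\Ww$ an isomorphism, which you would need to spell out but which presents no real difficulty.

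The genuine gap is in case (1). You propose to normalize $F_\n$ via Lemma~\ref{lem:2form cong} and then close the argument with Corollary~\ref{cor:2form norm} and a soft irreducibility argument, but neither tool suffices. Lemma~\ref{lem:2form cong} still leaves $\alpha$ with four free components $\alpha_1,\alpha_3,\alpha_5,\alpha_7$, and tracking the coupling between $\beta\in\Lambda^2_7$, $\delta\in\Lambda^2_{21}$ and $\gamma$ through equations \eqref{eq:L2 on2}--\eqref{eq:L2 on4} is then a large, case-heavy problem (compare the ten cases of Lemma~\ref{lem:sol F1}). The paper avoids this by using the sharper normal form Corollary~\ref{cor:sol F1 F2}: because $F_\n$ satisfies \emph{both} $\pi^2_7(F-*F^3/6)=0$ and $\pi^4_7(F^2)=0$, one can push $\alpha$ to $\alpha_1 e^1$ by a further $\Sp$-rotation, after which $F=C_1e^{01}+C_2e^{23}+C_3e^{45}+C_4e^{67}$ and the four conditions become explicit linear constraints on the seven components $\eta_j$ of $\gamma$. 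Corollary~\ref{cor:2form norm} is the wrong tool here: the hypotheses give only the vanishing of $\pi^4_1$, $\pi^4_7$, $\pi^4_{35}$ of $F_\n\wedge\gamma$ (with $*F_\n^3$ mixed in), not the full wedge product, so it never directly applies. The actual content of case (1) is Lemma~\ref{lem:generic pt}: by the explicit computations in \eqref{eq:L2 on2 pt 1}, \eqref{eq:L2 on3 pt 1}, \eqref{eq:L2 on4 pt 1}, \eqref{eq:L2 on4 pt 2}, the coefficient systems for $\eta_1,\{\eta_2,\eta_3\},\{\eta_4,\eta_5\},\{\eta_6,\eta_7\}$ all degenerate simultaneously only when $C_1=C_2=C_3=C_4=0$ (i.e.\ $F=0$) or $C_1C_2=C_3C_4=1$ (i.e.\ $*F^4/24=1$), both excluded pointwise by hypothesis. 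That verification is the bulk of the proof and is not recoverable from the representation-theoretic shortcuts you propose; you would need to redo the explicit computation, and you should note that the global condition $*F_\n^4/24\neq 1$ is pointwise by definition, so Lemma~\ref{lem:generic pt} applies at every point of the dense set $\{F_\n\neq 0\}$.
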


\begin{proof}
We first show that 
$D_{(\Phi, \n)} \wFf: \Om^4_1 \oplus \Om^4_7 \oplus \Om^4_{35} \oplus \i \Om^1 \to \i \Vv$ is surjective if we assume (1) or (2). 
By Proposition \ref{prop:L2 oncomp}, 
we only have to show that 
$\gamma \in \Vv$ satisfying \eqref{eq:L2 on1}--\eqref{eq:L2 on4} is zero. 

Suppose that 
$\gamma \in \Vv$ satisfies \eqref{eq:L2 on1}--\eqref{eq:L2 on4}. 
If (1) holds, we see that $\gamma$ vanishes on a dense set by Lemma \ref{lem:generic pt}. 
Since $\gamma$ is continuous, we see that $\gamma=0$. 
If (2) holds, \eqref{eq:L2 on1} is equivalent to $d^{*} \gamma =0$. 
By Lemma \ref{lem:vanish 27}, we see that $\gamma \in \Hh^2_7$. 
Since $\Hh^2_7 \cap \Vv = \{ 0 \}$, we obtain $\gamma=0$.

Next, we show that $\widehat \Mm'_\Sp$ is smooth near $(\Phi, [\n])$ by the implicit function theorem. 
As in Subsection \ref{sec:local str MSpin7}, the map 
\[
p: \Uu \times \Om^1_{d^{*_\n}} \rightarrow \Aa'_{0, \Uu}/\Gg_U, \quad
(\Psi, a) \mapsto (\Psi, [\nabla+ \i a \cdot \id_L])
\]
gives a homeomorphism from a neighborhood of $(\Phi, 0) \in \Uu \times \Om^1_{d^{*_{\n}}}$ 
to that of $(\Phi, [\nabla]) \in \Aa'_{0, \Uu}/\Gg_U$. 
Hence, a neighborhood of $(\Phi, [\nabla])$ in $\widehat \Mm'_\Sp$ is homeomorphic to 
that of $(\Phi, 0)$ in 
\[
\widehat \Ss_{(\Phi, \n)} = \left\{\,(\Psi,a) \in \Uu \times \Om^1_{d^{*_{\n}}}
\mid 
\wFf (\Psi, \nabla + \i a \cdot \id_L) = 0\,
\right \}. 
\]
Thus, we only have to show that $\widehat \Ss_{(\Phi, \n)}$ is smooth near $(\Phi, 0)$. 
Since 
$\{ 0 \} \times \i Z^1 \subset \ker \delta_{(\Phi, \n)} \wFf$, 
where $Z^1$ is the space of closed 1-forms, 
\begin{align*}
D':=D_{(\Phi, \n)} \wFf|_{T_\Phi \Uu \oplus \i \Om^1_{d^{*_\n}}}: 
T_\Phi \Uu \oplus \i \Om^1_{d^{*_\n}} \to \i \Vv
\end{align*}
is surjective if we assume (1) or (2).  

To apply the implicit function theorem, 
we show that there exist vector spaces 
$\Ww, \Ww' \subset T_\Phi \Uu \oplus \i \Om^1_{d^{*_\n}}$ 
such that 
$T_\Phi \Uu \oplus \i \Om^1_{d^{*_\n}} = \Ww \oplus \Ww'$ 
and 
$D'|_\Ww: \Ww \to \i \Vv$ is an isomorphism. 
Set 
$$
D'_1=D'|_{\i \Om^1_{d^{*_\n}}}, \qquad D'_2=D'|_{T_\Phi \Uu}.  
$$
Then, we have 
${\rm Im}D_1'={\rm Im}D_1, D_2'=D_2$ and 
${\rm Im}D'={\rm Im}D_1'+{\rm Im}D_2'$, 
where $D_1$ and $D_2$ are defined by \eqref{eq:D1D2}. 
By \eqref{eq:L2 oncomp 1}, we have 
$\i \Vv = U_2 \oplus \Im D_1'$. 
Since $D'$ is surjective, there exists a finite dimensional subspace 
$U_2' \subset T_\Phi \Uu$ such that 
$D'_2|_{U_2'}: U_2' \to U_2$ is an isomorphism. 
Since 
$U_2'$ is finite dimensional, there exists a subspace 
$(U_2')^\perp \subset T_\Phi \Uu$ such that 
$$
T_\Phi \Uu = U_2' \oplus (U_2')^\perp. 
$$ 
By \eqref{eq:commdiag} and the ellipticity of the canonical complex, 
$\ker D_1'$ is finite dimensional. 
Then, there exists a subspace 
$(\ker D_1')^\perp \subset T_\Phi \Uu$ such that 
$$
\i \Om^1_{d^{*_\n}} = \ker D_1' \oplus (\ker D_1')^\perp
$$
and $D_1'|_{(\ker D_1')^\perp} : (\ker D_1')^\perp \to {\rm Im}D_1'$ 
is an isomorphism. 
Thus, setting 
$$
\Ww=U_2' \oplus (\ker D_1')^\perp \qquad  
\Ww'= (U_2')^\perp \oplus \ker D_1', 
$$
we see that $T_\Phi \Uu \oplus \i \Om^1_{d^{*_\n}} = \Ww \oplus \Ww'$ 
and $D'|_\Ww: \Ww \to \i \Vv$ is an isomorphism.

Then, we can apply the implicit function theorem 
(after the Banach completion) to 
\[
\Uu \times \Om^1_{d^{*_{\n}}} \rightarrow (\varpi \circ p)^* \Ee, \quad 
(\Psi, a) \mapsto \wFf (\Psi, \n + \i a \cdot \id_L)
\]
and we see that $\widehat \Ss_{(\Phi, \n)}$ is smooth near $(\Phi, 0)$. 
Then, by the Sard--Smale theorem applied to the projection 
$\widehat \Mm'_\Sp \rightarrow \Uu$, 
for every generic $\Psi \in \Uu$ close to $\Phi$, 
$\mathcal{M}'_{\Sp, \Psi}$ is a smooth manifold or an empty set near $[\n]$.

Finally, we explain how to recover the regularity of elements in $\wFf^{-1}(0)$ after the Banach completion. 
By the proof of Theorem \ref{thm:moduli MSpin7}, 
the space $\wFf (\Phi, \,\cdot\,)^{-1}(0)$ 
can be seen as a space of solutions of an overdetermined elliptic equation around $0$. 
Since to be overdetermined elliptic is an open condition, 
if the $\Sp$-structure $\Psi$ is sufficiently close to $\Phi$, 
the space $\wFf (\Psi, \,\cdot\,)^{-1}(0)$ can also be seen  as a space of solutions 
of an overdetermined elliptic equation. 
Hence, elements in $\wFf^{-1}(0)$ around $(\Phi, \n)$ are smooth. 
\end{proof}

\subsection{The orientation of $\Mm'_{\Sp}$} \label{sec:orient}
Let $X^8$ be a compact connected manifold with 
a ${\rm Spin}(7)$-structure $\Phi$ and $L \to X$ be a smooth complex line bundle with a Hermitian metric $h$.
In this section, we show that $\Mm'_{\Sp}$ has a canonical orientation 
if $H^2(\#_\nabla) = \{\, 0 \,\}$ for any $[\n] \in \Mm'_{\Sp}$.

As in Section \ref{sec:suggestSpin7dDT}, 
let $\Aa_0$ be the space of a Hermitian connections of $(L, h)$
and $\Gg_U$ be the group of unitary gauge transformations of $(L,h)$ acting on $\Aa_0$. 
Set $\Bb_0 = \Aa_0/\Gg_U$. 
For any $\n \in \Aa_0$, we define a $\Sp$-structure $\Phi_\n$ 
by \eqref{eq:newSpin7str}. 
Then, we can define an elliptic operator 
$D (\flat_{\Phi_\n}) 
= (\pi^2_{7, \Phi_\n} \circ d, d^*): \i \Om^1 \rightarrow \i \Om^2_{7, \Phi_\n} \oplus \i \Om^0
$ as \eqref{cancpx2}. 
Set 
$$
\det D (\flat_{\Phi_\n}) = \Lambda^{{\rm top}} \ker D (\flat_{\Phi_\n}) 
\otimes (\Lambda^{{\rm top}} {\rm Coker} D (\flat_{\Phi_\n}))^*. 
$$
Define the determinant line bundle $\hat \Ll \rightarrow \Aa_0$ by 
$$
\hat \Ll = \bigsqcup_{\n \in \Aa_0} \det D (\flat_{\Phi_\n}). 
$$
Since the curvature 2-form $F_\n$ is invariant under the action of $\Gg_U$, 
$\hat \Ll$ induces the line bundle $\Ll \rightarrow \Bb_0$. 
We first show that $\Ll$ is a trivial line bundle.

\begin{proposition} \label{prop:trivlb}
We have an isomorphism 
$$
\Ll \cong \Bb_0 \times \det D (\flat_\Phi), 
$$ 
and hence, $\Ll$ is trivial. 
In particular, if we choose an orientation of $\det D (\flat_\Phi)$, 
we obtain a canonical orientation of $\Ll$. 
\end{proposition}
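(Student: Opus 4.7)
The plan is to exploit the fact that the new $\Sp$-structure $\Phi_\n$ is connected to the initial $\Sp$-structure $\Phi$ by a canonical path inside the space of $\Sp$-structures, and then to apply the homotopy invariance of the determinant line bundle of a continuous family of Fredholm operators. The identification with $\det D(\flat_\Phi)$ will be forced by evaluating the homotopy at $s=0$, and descent from $\Aa_0$ to $\Bb_0$ will come from gauge invariance of $F_\n$.

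For each $\n \in \Aa_0$ and $s \in [0,1]$, set $\Phi_\n(s) = (\id_{TX} + s(-\i F_\n)^\sharp)^* \Phi$, so that $\Phi_\n(0) = \Phi$ and $\Phi_\n(1) = \Phi_\n$. Exactly as in the proof of Lemma \ref{lem:ind Phin}, this yields a family of elliptic operators
\[
D_{\n,s} := \bigl( ((\id_{TX} + s(-\i F_\n)^\sharp)^{-1})^* \circ \pi^2_{7,\Phi_\n(s)} \circ d,\; d^* \bigr) \colon \i \Om^1 \to \i \Om^2_7 \oplus \i \Om^0
\]
whose source and target are both independent of $(\n,s)$. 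This family will depend continuously on $(\n,s) \in \Aa_0 \times [0,1]$ in the operator norm on Fredholm operators between suitable Sobolev completions, since $F_\n$ depends affinely on $\n$, the endomorphism $\id_{TX} + s(-\i F_\n)^\sharp$ depends smoothly on $(\n,s)$, and both inversion and passage to the projection $\pi^2_{7,\Phi_\n(s)}$ are continuous in this range.

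Taking fiberwise determinant lines produces a line bundle $\det D_{\cdot,\cdot} \to \Aa_0 \times [0,1]$, and the homotopy invariance of the determinant line bundle construction identifies its restrictions to $s=0$ and $s=1$ canonically as line bundles over $\Aa_0$. At $s=0$ the operator $D_{\n,0}$ equals $D(\flat_\Phi)$ for every $\n$, so the restriction there is the trivial bundle $\Aa_0 \times \det D(\flat_\Phi)$. At $s=1$ the isomorphism $((\id_{TX} + (-\i F_\n)^\sharp)^{-1})^*$ identifies the kernel and cokernel of $D_{\n,1}$ with those of $D(\flat_{\Phi_\n})$ canonically, producing a canonical isomorphism between the $s=1$ restriction and the pullback of $\hat\Ll$ to $\Aa_0$. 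Composing the two identifications will give a canonical trivialization of $\hat\Ll$ over $\Aa_0$.

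To descend this trivialization to $\Bb_0$ I will use that $F_\n$ is invariant under the $\Gg_U$-action, since $L$ has rank one; consequently $\Phi_\n(s)$ and the family $D_{\n,s}$ are $\Gg_U$-invariant, the trivialization above is $\Gg_U$-equivariant with $\Gg_U$ acting trivially on the fiber $\det D(\flat_\Phi)$, and it therefore descends to the required isomorphism $\Ll \cong \Bb_0 \times \det D(\flat_\Phi)$. A choice of orientation of $\det D(\flat_\Phi)$ then yields a canonical orientation of $\Ll$. The main technical point will be to ensure that the determinant bundle behaves well under the varying projections $\pi^2_{7,\Phi_\n(s)}$; this is the very purpose of the conjugation by $((\id_{TX} + s(-\i F_\n)^\sharp)^{-1})^*$, which provides a smoothly varying explicit isomorphism of the target spaces and thereby places the whole family within the setting of the standard determinant line bundle construction for a continuous family of Fredholm operators between fixed Banach spaces.
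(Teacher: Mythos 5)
Your proposal is correct and takes essentially the same approach as the paper: conjugate by $((\id_{TX} + s(-\i F_\n)^\sharp)^{-1})^*$ so that the source and target are fixed, use the canonical path $\Phi_\n(s)$ from $\Phi$ to $\Phi_\n$ to obtain a homotopy of Fredholm families, and invoke homotopy invariance of the determinant line bundle. The only organizational difference is that you trivialize $\hat\Ll$ over $\Aa_0$ and then descend to $\Bb_0$ by $\Gg_U$-equivariance, whereas the paper works directly with the $\Bb_0$-family (using that $\Bb_0$ is paracompact and Hausdorff, Remark \ref{rem:metric B0}, to apply the determinant-line homotopy lemma of Joyce--Upmeier); both routes are valid and lead to the same conclusion.
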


\begin{proof}
As in the proof of Lemma \ref{lem:ind Phin}, 
$((\id_{TX} + (-\i F_\n)^\sharp)^{-1})^* :\Om^2_{7, \Phi_\n} \rightarrow \Om^2_7$
is an isomorphism for any $[\n] \in \Bb_0$ by construction, 
$\det D (\flat_{\Phi_\n})$ is isomorphic to
$\det D' (\flat_{\Phi_\n})$, where 
\begin{align*}
D' (\flat_{\Phi_\n}) =& (((\id_{TX} + (-\i F_\n)^\sharp)^{-1})^* \circ \pi^2_{7, \Phi_\n} \circ d, d^*) \\ 
&: \i \Om^1 \rightarrow \i \Om^2_7 \oplus \i \Om^0. 
\end{align*}
Then, 
$\{ D' (\flat_{\Phi_\n}): \i \Om^1 \rightarrow \i \Om^2_7 \oplus \i \Om^0 \}_{\n \in \Bb_0}$ 
defines a $\Bb_0$-family of 
Fredholm operators in the sense of \cite[Definition 2.4]{JU}. 
Recall that $\Bb_0$ is paracompact and Hausdorff by Remark \ref{rem:metric B0}. 
For $[\n] \in \Bb_0$ and $s \in [0,1]$, define a $\Sp$-structure $\Phi_\n (s)$ by 
$$
\Phi_\n (s) = (\id_{TX} + s (- \i F_\n)^\sharp)^* \Phi. 
$$
Then, $\Phi_\n (0)=\Phi, \Phi_\n (1) = \Phi_\n$ and 
$D' (\flat_{\Phi_\n (s)})$ is elliptic for any $[\n] \in \Bb_0$ and $s \in [0,1]$. 
Hence, 
$\{ D' (\flat_{\Phi_\n (s)}): \i \Om^1 \rightarrow \i \Om^2_7 \oplus \i \Om^0 \}_{\n \in \Bb_0, s \in [0,1]}$ 
gives a homotopy between 
$\{ D' (\flat_{\Phi_\n}): \i \Om^1 \rightarrow \i \Om^2_7 \oplus \i \Om^0 \}_{\n \in \Bb_0}$ 
and 
$\{ D' (\flat_\Phi): \i \Om^1 \rightarrow \i \Om^2_7 \oplus \i \Om^0 \}_{\n \in \Bb_0}$.  
Then, by \cite[Lemma 2.6]{JU}, we obtain 
\[
\bigsqcup_{[\n] \in \Bb_0} \det D' (\flat_{\Phi_\n})
\cong 
\bigsqcup_{[\n] \in \Bb_0} \det D' (\flat_\Phi)
\cong 
\Bb_0 \times \det D' (\flat_\Phi)
= 
\Bb_0 \times \det D (\flat_\Phi). 
\]
Then, the proof is completed. 
\end{proof}

Then, using this proposition, we show that $\Mm'_{\Sp}$ is orientable.

\begin{corollary} \label{cor:oriSpin7}
Suppose that $H^2(\#_\nabla) = \{\, 0 \,\}$ for any $[\n] \in \Mm'_{\Sp}$. 
Then, $\Mm'_{\Sp}$ is a manifold and it admits a canonical orientation 
if we choose an orientation of $\det D$, 
where $D$ is defined by \eqref{cancpxfordDT3}. 
\end{corollary}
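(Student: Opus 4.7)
The plan is to identify $\det T\Mm'_{\Sp}$ with the restriction of the determinant line bundle $\Ll\to\Bb_0$ (up to a fixed, constant twist) and then apply the triviality of $\Ll$ from Proposition~\ref{prop:trivlb}.

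First, Theorem~\ref{thm:moduli MSpin7} already gives, under the hypothesis $H^2(\#_\n)=\{0\}$ for every $[\n]\in\Mm'_{\Sp}$, that $\Mm'_{\Sp}$ is a smooth finite-dimensional manifold with $T_{[\n]}\Mm'_{\Sp}=H^1(\#_\n)=\ker D\eqref{cpxfordDT}$, where $D\eqref{cpxfordDT}$ is the two-term elliptic operator in \eqref{cancpxfordDT2}. Since moreover ${\rm Coker}\,D\eqref{cpxfordDT}=H^2(\#_\n)=0$, we obtain the canonical equality $\det D\eqref{cpxfordDT}=\Lambda^{\rm top}T_{[\n]}\Mm'_{\Sp}$.

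Next, I would feed this into Lemma~\ref{lem:2cohom}, whose canonical identifications $\ker D(\flat_{\Phi_\n})\cong\ker D\eqref{cpxfordDT}$ and ${\rm Coker}\,D(\flat_{\Phi_\n})\cong\i\Hh^2_7\oplus{\rm Coker}\,D\eqref{cpxfordDT}$ globalise (using the commutative diagram \eqref{eq:commdiag}, which varies continuously with $[\n]$) to a canonical bundle isomorphism
$$
\Ll|_{\Mm'_{\Sp}}\;\cong\;\det T\Mm'_{\Sp}\otimes\underline{(\Lambda^{\rm top}\i\Hh^2_7)^*},
$$
where $\underline{V}$ denotes the trivial bundle over $\Mm'_{\Sp}$ with fibre the $[\n]$-independent real vector space $V$. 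An entirely analogous argument at the background $\Sp$-structure $\Phi$, this time using the orthogonal splitting $\Om^2_7=\Hh^2_7\oplus\Vv$ from Lemma~\ref{lem:decomp om27} together with Lemma~\ref{onimage}, yields $\ker D(\flat_\Phi)\cong\ker D$ and ${\rm Coker}\,D(\flat_\Phi)\cong\i\Hh^2_7\oplus{\rm Coker}\,D$, hence the canonical identification $\det D(\flat_\Phi)\cong\det D\otimes(\Lambda^{\rm top}\i\Hh^2_7)^*$.

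Finally, I would invoke Proposition~\ref{prop:trivlb}, which provides the canonical trivialisation $\Ll\cong\Bb_0\times\det D(\flat_\Phi)$, substitute the expression for $\det D(\flat_\Phi)$ from the previous step, and cancel the common constant factor $(\Lambda^{\rm top}\i\Hh^2_7)^*$ on both sides. This yields a canonical isomorphism $\det T\Mm'_{\Sp}\cong\Mm'_{\Sp}\times\det D$, so an orientation of $\det D$ produces a canonical orientation of $\Mm'_{\Sp}$. The one delicate point to verify is that the fibrewise isomorphisms supplied by Lemma~\ref{lem:2cohom} assemble into genuine bundle isomorphisms over $\Mm'_{\Sp}$ (not merely pointwise abstract identifications), so that the final cancellation of $\underline{(\Lambda^{\rm top}\i\Hh^2_7)^*}$ is legitimate; since the relevant identifications come from the canonical diagram \eqref{eq:commdiag} and the $L^2$-orthogonal decomposition \eqref{eq:decomp om27}, both of which are natural in $[\n]$, this should cause no trouble.
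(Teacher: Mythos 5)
Your approach matches the paper's almost exactly: identify $\det D\eqref{cpxfordDT}$ with $\Lambda^{\rm top}T_{[\n]}\Mm'_{\Sp}$, relate $\det D\eqref{cpxfordDT}$ to $\det D(\flat_{\Phi_\n})$ via Lemma~\ref{lem:2cohom}, feed the result into Proposition~\ref{prop:trivlb}, and then run the same chain at $F_\n=0$ to convert $\det D(\flat_\Phi)$ into $\det D$.

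There is one factual slip. You assert that ${\rm Coker}\,D\eqref{cpxfordDT}=H^2\eqref{cpxfordDT}=0$; this is false. For the rolled-up operator $D\eqref{cpxfordDT}=(\delta_\n\Ff_{\Sp},d^*):\i\Om^1\to\i\Vv\oplus\i\Om^0$ of the three-term complex \eqref{cpxfordDT}, the cokernel receives contributions from both $H^0$ and $H^2$, so under the hypothesis $H^2\eqref{cpxfordDT}=\{0\}$ the cokernel is the one-dimensional space $H^0\eqref{cpxfordDT}\cong\i\R$ of constants, not $\{0\}$; the paper says this explicitly, noting that $H^2\eqref{cpxfordDT}=\{0\}$ is equivalent to ${\rm Coker}\,D\eqref{cpxfordDT}=\R$. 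Because this line is canonically trivialised, the identification $\det D\eqref{cpxfordDT}\cong\Lambda^{\rm top}T_{[\n]}\Mm'_{\Sp}$ that you want still holds and the rest of your argument goes through unchanged, but your stated justification is wrong and should be repaired. Your closing concern about upgrading the pointwise identifications of Lemma~\ref{lem:2cohom} to genuine bundle isomorphisms is well placed; the paper leaves this implicit, and as you observe the naturality of \eqref{eq:commdiag} together with the fixed $L^2$-orthogonal splitting \eqref{eq:decomp om27} is what makes the cancellation of the constant factor $(\Lambda^{\rm top}\Hh^2_7)^*$ legitimate.
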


\begin{proof}
By the results of previous sections, 
for $\n \in \Ff^{-1}_\Sp (0)$, 
the deformation is controlled by the complex \eqref{cpxfordDT}. 
This is considered to be a subcomplex of the canonical complex $(\flat_{\Phi_\n})$ 
by \eqref{eq:commdiag}. 
By Lemma \ref{lem:2cohom}, it follows that 
\begin{align*}
\det D (\flat_{\Phi_\n}) 
\cong &
\Lambda^{{\rm top}} \ker D \eqref{cpxfordDT} 
\otimes (\Lambda^{{\rm top}} {\rm Coker} D \eqref{cpxfordDT})^*
\otimes \Lambda^{{\rm top}} (\Hh^2_7)^* \\
=& \det D \eqref{cpxfordDT} \otimes \Lambda^{{\rm top}} (\Hh^2_7)^*, 
\end{align*}
and hence,
\begin{align} \label{eq:detn}
\det D \eqref{cpxfordDT} \cong \det D (\flat_{\Phi_\n}) \otimes \Lambda^{{\rm top}} \Hh^2_7. 
\end{align}
If $H^2(\#_\nabla) = \{\, 0 \,\}$, which is equivalent to ${\rm Coker} D (\#_\nabla) = \R$, 
for any $[\n] \in \Mm'_{\Sp}$, 
$\Mm'_{\Sp}$ is a smooth manifold by Theorem \ref{thm:moduli MSpin7}
and the tangent space $T_{[\n]} \Mm'_{\Sp}$ at $[\n] \in \Mm'_\Sp$ is 
identified with $\ker D \eqref{cpxfordDT}$. 
Thus, denoting by $\iota: \Mm'_{\Sp} \hookrightarrow \Bb_0$ the inclusion, 
we see that 
$$
\Lambda^{{\rm top}} T \Mm'_{\Sp} 
\cong \iota^* \Ll \otimes \Lambda^{{\rm top}} \Hh^2_7
\cong \Mm'_{\Sp} \times 
\left(\det D (\flat_\Phi) \otimes \Lambda^{{\rm top}} \Hh^2_7 \right)
$$
by \eqref{eq:detn} and Proposition \ref{prop:trivlb}. 
By \eqref{eq:detn} again, 
$\det D (\flat_\Phi) \otimes \Lambda^{{\rm top}} \Hh^2_7 \cong \det D$, 
and the proof is completed. 
\end{proof}

\appendix


\section{
The induced $\Sp$-structure from a $\Sp$-dDT connection}
\label{sec:new Spin7 str} 
In this appendix, we give proofs of statements in Section \ref{sec:suggestSpin7dDT}. 
We often need straightforward computations and some of them will be technically hard to follow. 
We recommend readers to use some softwares to check these computations. 
(We also use the software Maple for confirmation. )

Use the notation of Section \ref{sec:basic}. 
Set $W =\R^8$ and let $g$ be the standard scalar product on $W$. 
For a 2-form $F \in \Lambda^2 W^*$, define $F^\sharp \in {\rm End} (W)$ by 
\begin{align} \label{eq:Fsharp}
g(F^\sharp (u), v) = F(u, v)
\end{align}
for $u,v \in W$. 
Then, 
$F^\sharp$ is skew-symmetric, and hence, 
$\det(I + F^\sharp) >0$, where $I$ is the identity matrix. 
Define a $\Sp$-structure $\Phi_F$ by 
$$
\Phi_F := (I + F^\sharp)^* \Phi, 
$$
where $\Phi$ is the standard $\Sp$-structure given by (\ref{Phi4}).

We first prove the following. 
This can be thought as a generalization of the last statement of 
\cite[Chapter I\hspace{-.1em}V,Theorem 2.20]{HL}.

\begin{proposition} \label{prop:1+F 00 Spin7}
Suppose that a 2-form $F \in \Lambda^2 W^*$ satisfies $\pi^2_7 \left(F - * F^3/6 \right) = 0$. 
Then, 
$$
1 - \frac{* F^4}{24}=0 \quad \mbox{or} \quad \pi^4_7 (F^2) =0. 
$$ 
\end{proposition}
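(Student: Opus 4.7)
The plan is to prove the proposition by an $\Sp$-equivariant reduction to normal form followed by an explicit polynomial computation.

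All three quantities appearing in the statement, namely $\pi^2_7(F - *F^3/6)$, the scalar $1 - *F^4/24$, and $\pi^4_7(F^2)$, are $\Sp$-equivariant in $F$ (since $\pi^2_7$, $\pi^4_7$, and $*$ are $\Sp$-equivariant). Therefore, by Lemma \ref{lem:2form cong}, I may assume that $F$ is in the canonical form
\[
F = 2\lambda^2(\alpha) + \sum_{j=1}^{4} \mu_j\, \omega_j,
\qquad
\omega_1 = e^{01},\ \omega_2 = e^{23},\ \omega_3 = e^{45},\ \omega_4 = e^{67},
\]
with $\alpha = \alpha_1 e^1 + \alpha_3 e^3 + \alpha_5 e^5 + \alpha_7 e^7 \in V^*$ and $\sum_{j=1}^{4}\mu_j = 0$. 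Write $F = F_7 + F_{21}$ with $F_7 = 2\lambda^2(\alpha)$ and $F_{21} = \sum_j \mu_j \omega_j$.

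By \eqref{eq:F2decomp}, neither $F_7 \wedge F_7$ nor $F_{21} \wedge F_{21}$ contains a $\Lambda^4_7 W^*$ component, so
\[
\pi^4_7(F^2) = 2\, \pi^4_7(F_7 \wedge F_{21}).
\]
In particular, if $F_7 = 0$ (i.e.\ $\alpha = 0$), then $\pi^4_7(F^2) = 0$ and we are done. Henceforth I assume $\alpha \neq 0$.

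The main step is to expand
\[
F^3 = F_7^3 + 3\,F_7^2 \wedge F_{21} + 3\,F_7 \wedge F_{21}^2 + F_{21}^3
\]
in the basis above, apply $*$, and project onto $\Lambda^2_7 W^*$ using the isometry $\lambda^2 : V^* \to \Lambda^2_7 W^*$ from Lemma \ref{lem:lambdas}. The identity $\pi^2_7(F - *F^3/6) = 0$ then becomes a system of seven polynomial equations in the eight parameters $(\alpha_1, \alpha_3, \alpha_5, \alpha_7, \mu_1, \mu_2, \mu_3, \mu_4)$, subject to the linear constraint $\sum_j \mu_j = 0$. Simultaneously, I compute the scalar $*F^4/24$ and write $\pi^4_7(F^2) = \lambda^4(\beta)$ for some $\beta \in V^*$ whose seven components are explicit polynomials in the same parameters. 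Direct polynomial manipulation---reducing modulo the seven hypothesis equations---should then produce the factorization
\[
\bigl(1 - \tfrac{1}{24}\,{*}F^4\bigr) \cdot \beta \;=\; 0 \quad \text{in } V^*,
\]
from which the desired dichotomy follows. Useful intermediate identities are those of Lemma \ref{lem:G2 identities} and Proposition \ref{prop:2form norm}, which simplify the terms $F_7^3$, $F_7 \wedge F_{21}^2$, and allow expressing $*F^4$ in terms of $|F_7|^2$, $|F_{21}|^2$, $|F_7 \wedge F_{21}|^2$, and $*F_{21}^4$.

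The main obstacle is the size of the polynomial algebra: the resulting expressions in eight variables are large, and the desired factorization is not apparent without substantial manipulation. I plan to carry this out by hand for the terms coming from $F_7^3$ and $F_{21}^3$ (where Proposition \ref{prop:2form norm} gives closed forms) and to use computer algebra (as the authors indicate, via Maple) to verify the cross-terms and the final factorization. The underlying reason one expects such a factorization to exist is the analogy with Harvey--Lawson \cite{HL} Chapter~IV, Theorem~2.20, in which the ``higher'' equation is automatic away from a Pfaffian-type degeneracy locus; here the condition $*F^4/24 = 1$ plays the role of that degeneracy.
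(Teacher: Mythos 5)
Your reduction to the normal form of Lemma \ref{lem:2form cong} and the observation that $\pi^4_7(F^2) = 2\pi^4_7(F_7 \wedge F_{21})$ match the paper's starting point, and the dichotomy you state is indeed what must be shown. But the decisive step---establishing that the hypothesis system forces $\bigl(1 - \tfrac{1}{24}*F^4\bigr)\cdot\pi^4_7(F^2) = 0$---is only conjectured, not proved. You write that ``direct polynomial manipulation \dots should then produce the factorization,'' and defer the verification to computer algebra. As a proof, that is a gap: you have outlined a strategy, not carried it out.

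Moreover, the paper's own argument suggests that the factorization you hope for is not obtainable by elementary reduction modulo the hypothesis ideal. The paper (Lemma \ref{lem:sol F1}) explicitly solves the system $\pi^2_7(F - *F^3/6) = 0$ in normal form, breaking the solution set into ten families (with further subcases), and Table \ref{table1} then shows that on some branches $\pi^4_7(F^2) = 0$ while $1 - *F^4/24 \neq 0$ (case~(1)), and on others $1 - *F^4/24 = 0$ while $\pi^4_7(F^2) \neq 0$ (cases~(2), (3)-(b), (4)-(b), (5)--(9)). In other words the solution variety splits into components on which \emph{different} factors of your product vanish, so the product vanishes on the real variety but need not lie in the polynomial ideal generated by the hypothesis equations; a na\"{i}ve ``reduce modulo the equations'' computation could fail to find anything. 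One also has to be careful with the semi-algebraic normalization $\mu_1 \leq \mu_2 \leq \mu_3 \leq \mu_4$ in Lemma \ref{lem:2form cong}, which is part of the normal form but is not a polynomial constraint. The work you would still need to do is precisely the case analysis in Lemma \ref{lem:sol F1} (driven by the rewritten equations \eqref{eq:sol F11}--\eqref{eq:sol F14}), which is where the paper spends its effort; without it, there is no proof here.
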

This implies that if a 2-form $F$ satisfies $\pi^2_7 \left(F - * F^3/6 \right) = 0$ and 
$1-*F^4/24 \neq 0$, we have $\pi^4_7 (F^2) =0$.

To prove this, we first show the following.

\begin{lemma} \label{lem:computation}
Suppose that $F$ is of the form of the right hand side of \eqref{eq:2form cong}. 
That is, $F = F_7 + F_{21}$, where 
\begin{align*}
F_7= 2 \lambda^2(\alpha) = e^0 \wedge \alpha + i(\alpha^\sharp) \varphi, \quad
F_{21} = 
\mu_1 e^{01} + \mu_2 e^{23} + \mu_3 e^{45} + \mu_4 e^{67},  
\end{align*}
$\alpha = \alpha_1 e^1 + \alpha_3 e^3 + \alpha_5 e^5 + \alpha_7 e^7$, 
$\sum_{j=1}^4 \mu_j =0$ for 
$\alpha_i, \mu_j \in \R$ 
and $\mu_1 \leq \mu_2 \leq \mu_3 \leq \mu_4$.  
\begin{enumerate}
\item
We have $\pi^4_7 (F^2) = 0$ if and only if
\begin{align*}
\alpha_3 (\mu_3+\mu_4)= \alpha_5 (\mu_2+\mu_4)= \alpha_7 (\mu_2+\mu_3)=0. 
\end{align*}

\item We have 
$\pi^2_7 \left(F - * F^3/6 \right) = 0$ if and only if 
\begin{align}
\label{eq:sol F11}
\left( 
- \sum_{j=1}^4 \mu_j^2 +4|\alpha|^2-4
\right) \alpha_1 
+ \sum_{1 \leq i < j <k \leq 4} \mu_i \mu_j \mu_k &=0, 
\\
\left( \mu_1 \mu_2 + \mu_3 \mu_4 + 2|\alpha|^2-2 \right) \alpha_3 &=0, 
\label{eq:sol F12} \\
\left( \mu_1 \mu_3 + \mu_2 \mu_4 + 2|\alpha|^2-2 \right) \alpha_5 &=0, 
\label{eq:sol F13} \\
\left( \mu_1 \mu_4 + \mu_2 \mu_3 + 2|\alpha|^2-2 \right) \alpha_7 &=0. 
\label{eq:sol F14}
\end{align}

\item We have 
\begin{align*}
\begin{split}
1 - \frac{* F^4}{24}
=&
1-|\alpha|^4
- (\alpha_1^2+\alpha_3^2) (\mu_1 \mu_2 + \mu_3 \mu_4) \\
&- (\alpha_1^2+\alpha_5^2) (\mu_1 \mu_3 + \mu_2 \mu_4) 
 - (\alpha_1^2+\alpha_7^2) (\mu_1 \mu_4 + \mu_2 \mu_3) \\
&- \alpha_1 \left(\mu_1 \mu_2 \mu_3 +\mu_1 \mu_2 \mu_4
+ \mu_1 \mu_3 \mu_4 + \mu_2 \mu_3 \mu_4 \right) 
- \mu_1 \mu_2 \mu_3 \mu_4. 
\end{split}
\end{align*}

\end{enumerate}
\end{lemma}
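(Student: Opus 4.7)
All three parts are direct computations in the explicit basis, made tractable by two structural observations: the decomposition \eqref{eq:F2decomp} and a parity argument comparing the indices that appear in $F = F_7 + F_{21}$ with those in the basis forms $\lambda^k(e^\mu)$ of \eqref{def:lambda}.

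For (1), write $F^2 = F_7^2 + 2 F_7 \wedge F_{21} + F_{21}^2$. By \eqref{eq:F2decomp} neither $F_7^2$ nor $F_{21}^2$ contributes to $\Lambda^4_7$, so $\pi^4_7(F^2) = 2\pi^4_7(F_7 \wedge F_{21})$, and by \eqref{eq:Spin7 proj 1} its vanishing is equivalent to $\la F_7 \wedge F_{21}, \lambda^4(e^\mu)\ra = 0$ for $\mu = 1, \ldots, 7$. Since each monomial of $F_7 = e^0 \wedge \alpha + i(\alpha^\sharp)\varphi$ and of $F_{21}$ uses one even-indexed and one odd-indexed basis vector, $F_7 \wedge F_{21}$ is supported on $4$-forms with exactly two even and two odd indices; a direct check from \eqref{def:lambda} shows $\lambda^4(e^\mu)$ has this parity pattern precisely when $\mu$ is even, while $\lambda^4(e^\mu)$ for $\mu$ odd has patterns $(1,3)$ or $(3,1)$. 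Hence only $\mu \in \{2,4,6\}$ yield nontrivial equations, and an explicit expansion collapses them to the three asserted conditions $\alpha_3(\mu_3+\mu_4) = \alpha_5(\mu_2+\mu_4) = \alpha_7(\mu_2+\mu_3) = 0$.

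For (2), expand $F^3 = F_7^3 + 3 F_7^2 \wedge F_{21} + 3 F_7 \wedge F_{21}^2 + F_{21}^3$. By Proposition \ref{prop:2form norm}, $*F_7^3/6 = (|F_7|^2/4) F_7 = |\alpha|^2 F_7$, which already lies in $\Lambda^2_7$ and supplies the $|\alpha|^2$-contributions in \eqref{eq:sol F11}--\eqref{eq:sol F14}. The three remaining summands are expanded directly in the basis, $*$ applied, and $\pi^2_7$ computed via $\pi^2_7(\beta) = (\beta + *(\Phi \wedge \beta))/4$ (equivalently, by pairing with $2\lambda^2(e^\mu)$). The $2$-form $F - *F^3/6$ is supported on mixed-parity monomials, and inspection of \eqref{def:lambda} shows that $\lambda^2(e^\mu)$ is of this type precisely for $\mu$ odd, so only $\mu \in \{1,3,5,7\}$ produce conditions, yielding exactly the four equations \eqref{eq:sol F11}--\eqref{eq:sol F14}. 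For (3), note that $*F^4/24 \cdot \mathrm{vol} = F^4/24$, so the quantity is the coefficient of $e^{01234567}$ in $F^4/24$; expanding $(F_7 + F_{21})^4$ multinomially, only products that exhaust each of the pair-blocks $(0,1),(2,3),(4,5),(6,7)$ exactly once survive, and collecting them gives the claimed identity.

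The main obstacle is the size of the bookkeeping in (2), where on the order of thirty monomial wedge products must be tracked through the projection. Two simplifications mitigate this: the $S_3$-symmetry permuting the blocks $(2,3),(4,5),(6,7)$, realized by elements of $\Sp$, reduces \eqref{eq:sol F12}--\eqref{eq:sol F14} to a single verification (only \eqref{eq:sol F11} stands apart, being the $\mathrm{U}(4)$-invariant direction), and, as the paper already notes, a computer algebra check (Maple) provides independent confirmation.
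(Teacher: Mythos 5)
Your proof takes essentially the same route as the paper's: in all three parts the relevant projections are reduced to pairings with the explicit $\lambda^k(e^\mu)$ basis (respectively, for (3), to reading off the volume-form coefficient of $F^4$) and then evaluated by direct expansion in coordinates. The parity observation is a pleasant shortcut for the vanishing pairings that the paper simply verifies by computation, but it does not change the structure of the argument; for (3) the paper is slightly more economical, re-using the inner products computed in the proof of (2) together with Proposition~\ref{prop:2form norm} rather than re-expanding $F^4$ from scratch, and you should be careful with the phrase ``only products that exhaust each of the pair-blocks exactly once survive''---terms of $F_7$ such as $e^{03}$, $e^{12}$, $e^{47}$, $e^{56}$ are not pair-blocks yet do contribute (e.g.\ to the coefficient $\alpha_3^2\mu_1\mu_2$), so read literally this remark would drop genuine contributions; it is harmless only if understood as the trivial statement that the eight indices must each be used once.
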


\begin{proof}
First we show (1). 
By \eqref{eq:F2decomp}, we see that 
$\pi^4_7 (F^2) = 2 \pi^4_7 (F_7 \wedge F_{21})$. 
Then, 
$\pi^4_7 (F^2) = 0$ if and only if
$\left \la F_7 \wedge F_{21}, \lambda^4 (e^i) \right \ra = 0$
for any $i=1, \cdots, 7$. 
Since 
\begin{align*}
F_7 \wedge F_{21} 
=&
e^0 \wedge 
\left \{ \alpha \wedge \left( \mu_2 e^{23} + \mu_3 e^{45} + \mu_4 e^{67} \right) 
+ \mu_1 e^1 \wedge i(\alpha^\sharp) \varphi \right \} \\
&+
i(\alpha^\sharp) \varphi \wedge \left( \mu_2 e^{23} + \mu_3 e^{45} + \mu_4 e^{67} \right) 
\end{align*}
and $\sqrt{8} \lambda^4 (e^i) = e^0 \wedge i(e_i) *_7 \varphi - e^i \wedge \varphi$, 
we have 
$$
\left \la F_7 \wedge F_{21}, \sqrt{8} \lambda^4 (e^i) \right \ra
=I_1 + I_2, 
$$
where 
\begin{align*}
I_1=&\left \la \alpha \wedge \left( \mu_2 e^{23} + \mu_3 e^{45} + \mu_4 e^{67} \right)
+ \mu_1 e^1 \wedge i(\alpha^\sharp) \varphi, 
i(e_i) * \varphi \right \ra \\
I_2=&-
\left \la i(\alpha^\sharp) \varphi \wedge \left( \mu_2 e^{23} + \mu_3 e^{45} 
+ \mu_4 e^{67} \right), 
e^i \wedge \varphi \right \ra. 
\end{align*}
Then, since 
\begin{align*}
I_1=&
\left \la \alpha, \mu_2 i(e_i) (e^{45}+e^{67}) + \mu_3 i(e_i) (e^{23}+e^{67}) 
+ \mu_4 i(e_i) (e^{23}+e^{45}) \right \ra \\
&+
\mu_1 \left \la i(\alpha^\sharp) \varphi, 
i(e_i) (-e^{357}+e^{346}+e^{256}+e^{247}) \right \ra, \\ 
I_2=&
-
\left \la i(\alpha^\sharp) \varphi, \right. 
\left. \mu_2 i(e_3) i(e_2) (e^i \wedge \varphi) 
+\mu_3 i(e_5) i(e_4) (e^i \wedge \varphi) 
+\mu_4 i(e_7) i(e_6) (e^i \wedge \varphi) 
\right \ra, 
\end{align*}
it is straightforward to obtain 
$\left \la F_7 \wedge F_{21}, \sqrt{8} \lambda^4 (e^i) \right \ra =0$
for $i=1,3,5,7$ and 
$$
\left \la F_7 \wedge F_{21}, \sqrt{8} \lambda^4 (e^i) \right \ra 
=
\left\{ \begin{array}{ll}
3 \alpha_3 (\mu_3+\mu_4) & \mbox{for } i=2, \\
3 \alpha_5 (\mu_2+\mu_4) & \mbox{for } i=4, \\
3 \alpha_7 (\mu_2+\mu_3) & \mbox{for } i=6. \\
\end{array} \right.
$$
Then, the proof of (1) is completed. 

Next, we prove (2). 
Note that 
$\pi^2_7 \left(F - * F^3/6 \right) = 0$ if and only if
$\left \la F - * F^3/6, 2 \lambda^2 (e^i) \right \ra = 0$
for any $i=1, \cdots, 7$. 
Since $F_7 =2 \lambda^2 (\alpha)$, we have 
\begin{align} \label{eq:comp1}
\left \la F, 2 \lambda^2 (e^i) \right \ra = 
\left\{ \begin{array}{ll}
4 \alpha_i & \mbox{for } i=1,3,5,7, \\
0             & \mbox{for } i=2,4,6. 
\end{array} \right. 
\end{align}
Next, we compute 
$\left \la * F^3, 2 \lambda^2 (e^i) \right \ra$. 
Recall that 
$F^3 = F_7^3 + 3 F_7^2 \wedge F_{21} + 3 F_7 \wedge F_{21}^2 + F_{21}^3$. 
By Proposition \ref{prop:2form norm}, we have 
$
F_7^3=(3/2) |F_7|^2 *F_7 = 12 |\alpha|^2 * \lambda^2 (\alpha). 
$
Hence, 
\begin{align} \label{eq:comp2}
\left \la * F_7^3, 2 \lambda^2 (e^i) \right \ra 
= 
\left\{ \begin{array}{ll}
24 |\alpha|^2 \alpha_i & \mbox{for } i=1,3,5,7, \\
0             & \mbox{for } i=2,4,6. 
\end{array} \right. 
\end{align}
By \eqref{eq:F2decomp}, 
we see that $F_7^2$ is self dual and 
$\left \la \Lambda^2_7 W^* \wedge \Lambda^2_{21} W^*, 
\Lambda^2_7 W^* \wedge \Lambda^2_7 W^* \right \ra =0. $
Then, we obtain 
\begin{align} \label{eq:comp3}
\left \la *( F_7^2 \wedge F_{21}), \lambda^2 (e^i) \right \ra
=
* \left( F_{21} \wedge \lambda^2 (e^i) \wedge F_7^2 \right)
= \left \la F_{21} \wedge \lambda^2 (e^i), F_7^2 \right \ra =0.
\end{align}

We also compute 
\begin{align*}
\left \la *( F_7 \wedge F_{21}^2), 2 \lambda^2 (e^i) \right \ra
=
* \left( F_{21}^2 \wedge F_7 \wedge 2 \lambda^2 (e^i) \right)
= \left \la F_{21}^2, F_7 \wedge 2 \lambda^2 (e^i) \right \ra. 
\end{align*}
Since 
\begin{align*}
F_{21}^2
=&2 \left(\mu_1 \mu_2 e^{0123} + \mu_1 \mu_3 e^{0145} + \mu_1 \mu_4 e^{0167} \right. \\
&\left. + \mu_2 \mu_3 e^{2345} 
+ \mu_2 \mu_4 e^{2367} + \mu_3 \mu_4 e^{4567} \right), \\
F_7 \wedge 2 \lambda^2 (e^i) 
=& 
\left( e^0 \wedge \alpha + i(\alpha^\sharp) \varphi \right) \wedge 
\left( e^{0 i} + i(e_i) \varphi \right), 
\end{align*}
and 
\begin{align*}
&\left \la e^{0123}, F_7 \wedge 2 \lambda^2 (e^i) \right \ra \\
=&
F_7 (e_0, e_1) \cdot (2 \lambda^2 (e^i)) (e_2,e_3)
-F_7 (e_0, e_2) \cdot (2 \lambda^2 (e^i)) (e_1,e_3) \\
&+F_7 (e_0, e_3) \cdot (2 \lambda^2 (e^i)) (e_1,e_2)
+F_7 (e_1, e_2) \cdot (2 \lambda^2 (e^i)) (e_0,e_3) \\
&-F_7 (e_1, e_3) \cdot (2 \lambda^2 (e^i)) (e_0,e_2)
+F_7 (e_2, e_3) \cdot (2 \lambda^2 (e^i)) (e_0,e_1) \\
=&
2 \delta_{i1} \alpha_1 + 2 \delta_{i3} \alpha_3, 
\end{align*}
it is straightforward to obtain 
\begin{align} \label{eq:comp4}
\begin{split}
&\left \la *( F_7 \wedge F_{21}^2), 2 \lambda^2 (e^i) \right \ra \\
=&
4 \left \{ 
\mu_1 \mu_2 (\delta_{i1} \alpha_1 + \delta_{i3} \alpha_3) 
+\mu_1 \mu_3 (\delta_{i1} \alpha_1 + \delta_{i5} \alpha_5) 
+\mu_1 \mu_4 (\delta_{i1} \alpha_1 + \delta_{i7} \alpha_7)
\right. \\
& 
\left. +\mu_2 \mu_3 (\delta_{i1} \alpha_1 + \delta_{i7} \alpha_7) 
+\mu_2 \mu_4 (\delta_{i1} \alpha_1 + \delta_{i5} \alpha_5) 
+\mu_3 \mu_4 (\delta_{i1} \alpha_1 + \delta_{i3} \alpha_3) 
\right \}. 
\end{split}
\end{align}
Since 
\begin{align*}
* F_{21}^3
=&6 \left(\mu_1 \mu_2 \mu_3 e^{67} +\mu_1 \mu_2 \mu_4 e^{45}
+ \mu_1 \mu_3 \mu_4 e^{23} + \mu_2 \mu_3 \mu_4 e^{01} \right), 
\end{align*}
we also have
\begin{align} \label{eq:comp5}
\left \la * F_{21}^3, 2 \lambda^2 (e^i) \right \ra 
= 6 \delta_{i1} \left( \mu_1 \mu_2 \mu_3 + \mu_1 \mu_2 \mu_4 
+ \mu_1 \mu_3 \mu_4 + \mu_2 \mu_3 \mu_4 \right). 
\end{align}
Hence, by \eqref{eq:comp2}-\eqref{eq:comp5}, we obtain 
$\left \la - * (F^3/6), 2 \lambda^2 (e^i) \right \ra = 0$ for $i=2,4,6$ and 
\begin{align*}
&\left \la - * (F^3/6), 2 \lambda^2 (e^i) \right \ra \\
=&
-4 |\alpha|^2 \alpha_i 
- \delta_{i1} 
\left(\mu_1 \mu_2 \mu_3 +\mu_1 \mu_2 \mu_4
+ \mu_1 \mu_3 \mu_4 + \mu_2 \mu_3 \mu_4 \right)
\\
&-2 \left \{ 
\mu_1 \mu_2 (\delta_{i1} \alpha_1 + \delta_{i3} \alpha_3) 
+\mu_1 \mu_3 (\delta_{i1} \alpha_1 + \delta_{i5} \alpha_5) 
+\mu_1 \mu_4 (\delta_{i1} \alpha_1 + \delta_{i7} \alpha_7)
\right. \\
& 
\left. +\mu_2 \mu_3 (\delta_{i1} \alpha_1 + \delta_{i7} \alpha_7) 
+\mu_2 \mu_4 (\delta_{i1} \alpha_1 + \delta_{i5} \alpha_5) 
+\mu_3 \mu_4 (\delta_{i1} \alpha_1 + \delta_{i3} \alpha_3) 
\right \}
\end{align*}
for $i=1,3,5,7$. 
This together with \eqref{eq:comp1} implies (2). Note that 
$$
2 (\mu_1 \mu_2 + \mu_1 \mu_3 + \mu_1 \mu_4 
+ \mu_2 \mu_3 + \mu_2 \mu_4 + \mu_3 \mu_4) = - \sum_{j=1}^4 \mu_j^2
$$
since 
$0 = (\mu_1 + \mu_2 + \mu_3 + \mu_4)^2 = \sum_{j=1}^4 \mu_j^2 
+ 2 \sum_{1 \leq i < j \leq 4} \mu_i \mu_j$.

Finally, we prove (3). 
By Proposition \ref{prop:2form norm}, we have 
$$
* F_7^4 =\frac{3}{2} |F_7|^4 = 24|\alpha|^4. 
$$
By Proposition \ref{prop:2form norm}, we have 
$* F_7^3 \in \Lambda^2_7 W^*$. Hence, we have 
$$
*(F_7^3 \wedge F_{21}) = 0. 
$$
Since $F_7=2 \lambda^2 (\alpha) = \sum_{i=1,3,5,7} \alpha_i \cdot 2 \lambda^2 (e^i)$, 
we have by \eqref{eq:comp4}
\begin{align*}
* (F_7^2 \wedge F_{21}^2) 
=& \left \la *(F_7 \wedge F_{21}^2), F_7 \right \ra \\
=&
4 (\alpha_1^2+\alpha_3^2) (\mu_1 \mu_2 + \mu_3 \mu_4)
+4 (\alpha_1^2+\alpha_5^2) (\mu_1 \mu_3 + \mu_2 \mu_4) \\
&+4 (\alpha_1^2+\alpha_7^2) (\mu_1 \mu_4 + \mu_2 \mu_3). 
\end{align*}
By \eqref{eq:comp5}, we have  
$$
* (F_7 \wedge F_{21}^3)= \left \la *F_{21}^3, F_7 \right \ra
= 6 \alpha_1 \left(\mu_1 \mu_2 \mu_3 +\mu_1 \mu_2 \mu_4
+ \mu_1 \mu_3 \mu_4 + \mu_2 \mu_3 \mu_4 \right). 
$$
We also compute 
$$
* F_{21}^4= 24 \mu_1 \mu_2 \mu_3 \mu_4. 
$$
From these equations, we obtain (3). 
\end{proof}

\begin{lemma} \label{lem:sol F1}
Suppose that $F$ is of the form of the right hand side of \eqref{eq:2form cong}. 
Then, $\pi^2_7 \left(F - * F^3/6 \right) = 0$ if and only if 
one of the following conditions holds. 
\begin{enumerate}
\item
$\alpha_3=\alpha_5=\alpha_7=0$ and 
\begin{align} \label{eq:solsol}
\left( - \sum_{j=1}^4 \mu_j^2 +4 \alpha_1^2-4 \right) \alpha_1 
+ \sum_{1 \leq i < j <k \leq 4} \mu_i \mu_j \mu_k
=0. 
\end{align}

\item
$\alpha_3 \neq 0, \alpha_5=\alpha_7=0$, 
$\mu_1 \mu_2 + \mu_3 \mu_4 + 2 |\alpha|^2-2 =0$, 
$\mu_3+\mu_4 \neq 0$
and 
$\alpha_1= - (\mu_2+\mu_3) (\mu_2+\mu_4)/(2 (\mu_3+\mu_4)).$

\item
$\alpha_5 \neq 0, \alpha_3=\alpha_7=0$, 
$\mu_1 \mu_3 + \mu_2 \mu_4 + 2 |\alpha|^2-2  =0$
and 
\begin{enumerate}
\item
$\mu_2+\mu_4=0$, or 
\item
$\mu_2+\mu_4 \neq 0$
and 
$\alpha_1= - (\mu_2+\mu_3) (\mu_3+\mu_4)/(2 (\mu_2+\mu_4)).$
\end{enumerate}

\item
$\alpha_7 \neq 0, \alpha_3=\alpha_5=0$, 
$\mu_1 \mu_4 + \mu_2 \mu_3 + 2 |\alpha|^2-2=0$
and 
\begin{enumerate}
\item
$\mu_2+\mu_3=0$, or 
\item
$\mu_2+\mu_3 \neq 0$
and 
$\alpha_1= - (\mu_2+\mu_4) (\mu_3+\mu_4)/(2 (\mu_2+\mu_3)).$
\end{enumerate}

\item
$\alpha_3, \alpha_5 \neq 0, \alpha_7=0$, 
$\mu_2+\mu_4 \neq 0$, 
$\mu_2=\mu_3, \alpha_1=-\mu_2, 
\alpha_3^2+\alpha_5^2=1$. 

\item
$\alpha_3, \alpha_7 \neq 0, \alpha_5=0$, 
$\mu_2+\mu_3 \neq 0$, 
$\mu_2=\mu_4, \alpha_1=-\mu_2, 
\alpha_3^2+\alpha_7^2=1$.

\item
$\alpha_5, \alpha_7 \neq 0, \alpha_3=0$ and 
\begin{enumerate}
\item
$(\mu_1, \mu_2, \mu_3, \mu_4)=(-\mu, -\mu, \mu, \mu)$
and 
$|\alpha|^2=1+ \mu^2$ for $\mu >0$, or 
\item
$\mu_2+\mu_3, \mu_2+\mu_4 \neq 0$, 
$\mu_3=\mu_4, \alpha_1=-\mu_3, 
\alpha_5^2+\alpha_7^2=1$, or
\item
$\mu_2+\mu_3, \mu_2+\mu_4 \neq 0$, 
$\mu_1=\mu_2, \alpha_1=-\mu_1, 
\alpha_5^2+\alpha_7^2=1$.  
\end{enumerate}

\item
$(\mu_1, \mu_2, \mu_3, \mu_4)=(-3 \mu, \mu, \mu, \mu), 
\alpha_1= - \mu$ for $\mu > 0$
and 
$\alpha_3^2+\alpha_5^2+\alpha_7^2=1$.

\item
$(\mu_1, \mu_2, \mu_3, \mu_4)=(-\mu, -\mu, -\mu, 3 \mu), 
\alpha_1= \mu$ for $\mu > 0$ 
and 
$\alpha_3^2+\alpha_5^2+\alpha_7^2=1$.

\item
$\mu_1=\mu_2=\mu_3=\mu_4=0$
and $|\alpha|^2=1$. 
\end{enumerate}
\end{lemma}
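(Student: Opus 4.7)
The plan is to apply Lemma \ref{lem:computation}(2), which translates the hypothesis $\pi^2_7(F - *F^3/6) = 0$ into the algebraic system \eqref{eq:sol F11}--\eqref{eq:sol F14} in the $\mu_j$ and $\alpha_i$. Since each of the three equations \eqref{eq:sol F12}--\eqref{eq:sol F14} is a product, I would classify solutions by which subset of $\{\alpha_3, \alpha_5, \alpha_7\}$ vanishes, giving four regimes (none, one, two, or three nonzero). Throughout, I rely on the ordering $\mu_1 \leq \mu_2 \leq \mu_3 \leq \mu_4$, the trace condition $\sum_j \mu_j = 0$, and the identities
\[
\sum_{i<j<k}\mu_i\mu_j\mu_k = (\mu_3+\mu_4)(\mu_1\mu_2 - \mu_3\mu_4), \quad (\mu_2+\mu_3)(\mu_2+\mu_4) = \mu_3\mu_4 - \mu_1\mu_2,
\]
together with their cyclic permutations (all derived by direct expansion from $\sum_j \mu_j = 0$) to simplify \eqref{eq:sol F11}.

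When $\alpha_3 = \alpha_5 = \alpha_7 = 0$, equations \eqref{eq:sol F12}--\eqref{eq:sol F14} hold trivially and \eqref{eq:sol F11} is exactly \eqref{eq:solsol}, giving case (1). When exactly one of them is nonzero, say $\alpha_3$, equation \eqref{eq:sol F12} pins down $|\alpha|^2$ in terms of the $\mu_j$. Substituting into \eqref{eq:sol F11} and applying the identities above reduces it to
\[
(\mu_3+\mu_4)\bigl(2(\mu_3+\mu_4)\alpha_1 + (\mu_2+\mu_3)(\mu_2+\mu_4)\bigr) = 0.
\]
The branch $\mu_3+\mu_4 \neq 0$ yields the explicit formula for $\alpha_1$ in case (2), while the degenerate branch $\mu_3+\mu_4 = 0$ combined with the ordering forces all $\mu_j = 0$, which is absorbed into case (10). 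Cases (3) and (4) follow by symmetric arguments with $\alpha_5$ and $\alpha_7$ in the leading role.

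When exactly two of $\alpha_3, \alpha_5, \alpha_7$ are nonzero, say $\alpha_3$ and $\alpha_5$, subtracting \eqref{eq:sol F12} from \eqref{eq:sol F13} gives $(\mu_1-\mu_4)(\mu_2-\mu_3) = 0$. The outer coincidence $\mu_1 = \mu_4$ forces all $\mu_j$ equal and hence zero (case (10)), so the only surviving branch has $\mu_2 = \mu_3$. Substituting back into \eqref{eq:sol F11} and using $\mu_1+\mu_4 = -2\mu_2$ reduces it to $(\mu_2+\mu_4)^2(\alpha_1 + \mu_2) = 0$, yielding case (5) in the non-degenerate sub-branch $\mu_2+\mu_4 \neq 0$. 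Cases (6) and (7)(b), (c) follow by symmetric arguments. The additional case (7)(a) arises when both coincidences $\mu_1 = \mu_2$ and $\mu_3 = \mu_4$ hold simultaneously, forcing the doubly-symmetric configuration $(-\mu,-\mu,\mu,\mu)$.

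When all three of $\alpha_3, \alpha_5, \alpha_7$ are nonzero, equations \eqref{eq:sol F12}--\eqref{eq:sol F14} all hold, and the three pairwise subtractions force
\[
(\mu_1-\mu_4)(\mu_2-\mu_3) = (\mu_1-\mu_3)(\mu_2-\mu_4) = (\mu_1-\mu_2)(\mu_3-\mu_4) = 0.
\]
A short combinatorial argument using the ordering shows that either all four $\mu_j$ coincide (hence vanish, case (10)) or exactly three consecutive $\mu_j$ coincide, giving $(-3\mu,\mu,\mu,\mu)$ or $(-\mu,-\mu,-\mu,3\mu)$ with $\mu > 0$. Then \eqref{eq:sol F11} fixes $\alpha_1 = -\mu$ or $\alpha_1 = \mu$ respectively, and the single remaining quadratic constraint becomes $\alpha_3^2+\alpha_5^2+\alpha_7^2 = 1$, producing cases (8) and (9). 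The main obstacle throughout is the careful bookkeeping in the degenerate sub-branches where some $\mu_i + \mu_j$ vanishes: each such configuration must be traced through the symmetric-function identities under $\sum_j \mu_j = 0$ to confirm it reduces exactly to one of the ten enumerated normal forms rather than producing a new case.
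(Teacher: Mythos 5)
Your proposal follows the same case-division framework as the paper's proof: classify solutions by which of $\alpha_3, \alpha_5, \alpha_7$ vanish, translate the dDT condition into the system \eqref{eq:sol F11}--\eqref{eq:sol F14} via Lemma~\ref{lem:computation}(2), and chase each regime using the ordering $\mu_1 \leq \cdots \leq \mu_4$ and the trace constraint. The genuine difference is the algebraic route inside each regime. You subtract pairs of the equations \eqref{eq:sol F12}--\eqref{eq:sol F14} to obtain factorized constraints like $(\mu_1-\mu_4)(\mu_2-\mu_3)=0$ \emph{before} touching \eqref{eq:sol F11}; the ordering then collapses the branches, and \eqref{eq:sol F11} is invoked once at the end to pin down $\alpha_1$ via a single clean factorization. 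The paper instead substitutes into \eqref{eq:sol F11} first, producing the pair \eqref{eq:sol F16}--\eqref{eq:sol F17}, and eliminates $\alpha_1$ between them to obtain a longer factorization such as $(\mu_2+\mu_3)(\mu_2-\mu_3)(-\mu_1+\mu_4)=0$. Your shortcut isolates the role of the ordering constraint earlier and reduces the amount of simultaneous bookkeeping, which is a modest but real improvement. One point to tighten on execution: the three pairwise subtractions behave asymmetrically under the ordering, so the phrase ``Cases (6) and (7)(b), (c) follow by symmetric arguments'' understates the variety of outcomes --- in the $\alpha_3,\alpha_7\neq 0$ regime the condition $(\mu_1-\mu_3)(\mu_2-\mu_4)=0$ together with the ordering forces three of the $\mu_j$ to coincide and produces cases (8) and (9) directly, not merely an analogue of case (5); similarly case (7)(a) corresponds to both factors of $(\mu_1-\mu_2)(\mu_3-\mu_4)$ vanishing, which leaves $\alpha_1$ unconstrained. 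The paper's proof tracks these overlaps explicitly in its remarks on which cases include which others. Your closing comment about degenerate sub-branches acknowledges the need for this, and the plan as stated would produce the correct enumeration on careful execution, so I see no genuine gap.
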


\begin{proof}
We rewrite equations 
in Lemma \ref{lem:computation} (2)
to prove the statement. 
The proof is done by division into possible cases systematically. 

\vspace{2mm}
\noindent
(I) \textbf{The case where $\alpha_3=\alpha_5=\alpha_7=0$. }
In this case, \eqref{eq:sol F12},\eqref{eq:sol F13} and \eqref{eq:sol F14} 
are satisfied and \eqref{eq:sol F11} is equivalent to \eqref{eq:solsol}. Thus, we obtain (1). 
Note that (1) includes (10) with $\alpha_3=\alpha_5=\alpha_7=0$. 

\vspace{2mm}
\noindent
(II) \textbf{The case where one of $\alpha_3$, $\alpha_5$ or $\alpha_7$ is $0$. }
\begin{itemize}
\item[(i)] \textbf{The case where $\alpha_3 \neq 0$ and $\alpha_5=\alpha_7=0$. }
In this case, \eqref{eq:sol F13} and \eqref{eq:sol F14} 
are satisfied 
and \eqref{eq:sol F12} are equivalent to 
\begin{align} \label{eq:sol F14.5}
2|\alpha|^2-2 = -(\mu_1 \mu_2 + \mu_3 \mu_4).  
\end{align}
Substituting this into \eqref{eq:sol F11}, we have 
\begin{align*}
\left \{ - (\mu_1 + \mu_2)^2 - (\mu_3 + \mu_4)^2 \right \} \alpha_1  
+ \sum_{1 \leq i < j <k \leq 4} \mu_i \mu_j \mu_k =0. 
\end{align*}
Since 
$(\mu_1 + \mu_2)^2 = (\mu_3+\mu_4)^2$ 
and 
\begin{align} \label{eq:muijk}
\begin{split}
\sum_{1 \leq i < j <k \leq 4} \mu_i \mu_j \mu_k 
&=
\mu_1 \mu_2 (\mu_3 + \mu_4) + (\mu_1 + \mu_2) \mu_3 \mu_4 \\
&=
(\mu_3 + \mu_4) (\mu_1 \mu_2 - \mu_3 \mu_4) \\
&=
(\mu_3 + \mu_4) (- (\mu_2+\mu_3+\mu_4) \mu_2 - \mu_3 \mu_4) \\
&=
- (\mu_3 + \mu_4) (\mu_2 + \mu_3) (\mu_2 + \mu_4), 
\end{split}
\end{align}
if follows that 
$$
(\mu_3 + \mu_4) \left \{ 2 (\mu_3 + \mu_4) \alpha_1 
+ (\mu_2 + \mu_3) (\mu_2 + \mu_4) \right \}=0.  
$$
Now, we further divide the situation. 
\begin{itemize}
\item[(A)] If $\mu_3 + \mu_4=0$, we have $\mu_1 + \mu_2=0$ by $\sum_{j=1}^4 \mu_j=0$. 
Then, $\mu_1 \leq \mu_2 \leq \mu_3 \leq \mu_4$ implies that 
$\mu_1 = - \mu_2 \geq - \mu_3 = \mu_4$. 
Thus, we obtain $\mu_1 = \mu_4$, 
and hence, $\mu_1 = \mu_2 = \mu_3 = \mu_4=0$.
This together with \eqref{eq:sol F14.5} implies that 
$|\alpha|^2=1$, which is (10) with $\alpha_3 \neq 0, \alpha_5=\alpha_7=0$. 
\item[(B)] If $\mu_3 + \mu_4 \neq 0$, we obtain (2). 
Note that (2) includes (8) and (9) with $\alpha_3 \neq 0, \alpha_5=\alpha_7=0$. 
\end{itemize}

\item[(ii)]  \textbf{The case where $\alpha_5 \neq 0$ and $\alpha_3=\alpha_7=0$. }
In this case, we obtain (3) similarly to the case (i). 

\item[(iii)] \textbf{The case where $\alpha_7 \neq 0$ and $\alpha_3=\alpha_5=0$. }
In this case, we obtain (4) similarly to the case (i). 
\end{itemize}

Note that 
we cannot deduce $\mu_1 = \mu_2 = \mu_3 = \mu_4=0$ 
from $\mu_2+\mu_4=0$ or $\mu_2+\mu_3=0$. 
A counter example is $(\mu_1, \mu_2, \mu_3, \mu_4)=(-1, -1, 1, 1)$. 
Further, note that 
(3)-(a), (3)-(b), (4)-(a), (4)-(b) include 
(10) with $\alpha_5 \neq 0, \alpha_3=\alpha_7=0$, 
(8) and (9) with $\alpha_5 \neq 0, \alpha_3=\alpha_7=0$, 
(10) with $\alpha_7 \neq 0, \alpha_3=\alpha_5=0$, 
(8) and (9) with $\alpha_7 \neq 0, \alpha_3=\alpha_5=0$, respectively.

\vspace{2mm}
\noindent
(III) \textbf{The case where two of $\alpha_3$, $\alpha_5$ or $\alpha_7$ are $0$. }
\begin{itemize}
\item[(i)] \textbf{ The case where $\alpha_3, \alpha_5 \neq 0$ and $\alpha_7=0$. }
In this case, \eqref{eq:sol F12} and \eqref{eq:sol F13} are equivalent to 
\begin{equation}\label{eq:sol F15}
\begin{aligned}
&2|\alpha|^2-2 = -(\mu_1 \mu_2 + \mu_3 \mu_4) \quad \mbox{and}\\
&2|\alpha|^2-2 = -(\mu_1 \mu_3 + \mu_2 \mu_4), 
\end{aligned}
\end{equation}
respectively. 
Substituting these into \eqref{eq:sol F11} as in the case (II)-(i), we obtain 
\begin{align} \label{eq:sol F16}
(\mu_3 + \mu_4) \left \{ 2 (\mu_3 + \mu_4) \alpha_1 
+ (\mu_2 + \mu_3) (\mu_2 + \mu_4) \right \} &=0, \\
\label{eq:sol F17}
(\mu_2 + \mu_4) \left \{ 2 (\mu_2 + \mu_4) \alpha_1 
+ (\mu_2 + \mu_3) (\mu_3 + \mu_4) \right \} &=0. 
\end{align}
Now, we further divide the situation. 
\begin{itemize}
\item[(A)] If $\mu_3 + \mu_4=0$, by the same argument as in the case (II)-(i)-(A), 
we have $\mu_1=\mu_2=\mu_3=\mu_4=0$ and $|\alpha|^2=1$, 
which is (10) with $\alpha_3, \alpha_5 \neq 0, \alpha_7=0$. 
\item[(B)] If $\mu_2 + \mu_4=0$, 
we have $\mu_1 + \mu_3=0$ by $\sum_{j=1}^4 \mu_j=0$. 
Then, since 
\[
\begin{aligned}
&\mu_1 \mu_2 + \mu_3 \mu_4 = 2 \mu_1 \mu_2, \\
&\mu_1 \mu_3 + \mu_2 \mu_4 = -\mu_1^2 - \mu_2^2, 
\end{aligned}
\]
\eqref{eq:sol F15} implies that $\mu_1= -\mu_2$. 
Then, we obtain $\mu_3+\mu_4=0$. 
Thus, this case is included in the previous case. 
\item[(C)] If $\mu_3 + \mu_4, \mu_2 + \mu_4 \neq 0$, \eqref{eq:sol F16} and \eqref{eq:sol F17}
imply that 
\begin{align} \label{eq:sol F18}
\alpha_1 = - \frac{(\mu_2 + \mu_3) (\mu_2 + \mu_4)}{2 (\mu_3 + \mu_4)}
= - \frac{(\mu_2 + \mu_3) (\mu_3 + \mu_4)}{2 (\mu_2 + \mu_4)}. 
\end{align}
Then, we have  
\begin{align*}
&(\mu_2 + \mu_3) \left \{ (\mu_2 + \mu_4)^2 - (\mu_3 + \mu_4)^2 \right \} \\
=&
(\mu_2 + \mu_3) (\mu_2 - \mu_3) (\mu_2 + \mu_3 + 2 \mu_4) \\
=& (\mu_2 + \mu_3) (\mu_2 - \mu_3) (- \mu_1 + \mu_4)
=0.
\end{align*}
We patiently divide this situation. 
\begin{itemize}
\item[(C1)] If $\mu_2 + \mu_3=0$, 
we have $\mu_1 + \mu_4=0$ by $\sum_{j=1}^4 \mu_j=0$. 
The equation \eqref{eq:sol F18} implies that $\alpha_1=0$. 
The equation \eqref{eq:sol F15} is equivalent to 
$$
2|\alpha|^2-2 = -2 \mu_1 \mu_2 = 2 \mu_1 \mu_2,  
$$
which implies that $|\alpha|^2=1$ and $\mu_1 \mu_2 =0$. 
If $\mu_1=0$, 
$\sum_{j=1}^4 \mu_j=0$ and $\mu_1 \leq \mu_2 \leq \mu_3 \leq \mu_4$ imply that 
$\mu_1 = \mu_2 = \mu_3 = \mu_4 =0$, 
which contradicts $\mu_3 + \mu_4 \neq 0$. 
If $\mu_2=0$, we have $\mu_3=-\mu_2=0$. 
Thus, we obtain (5) with $\mu_2=0$. 
\item[(C2)] If $\mu_2 - \mu_3=0$, 
we have $\mu_1 + \mu_4= -2 \mu_2$ by $\sum_{j=1}^4 \mu_j=0$. 
The equation \eqref{eq:sol F18} implies that $\alpha_1=-\mu_2$. 
Then, \eqref{eq:sol F15} is equivalent to 
\[
\begin{aligned}
|\alpha|^2
=&\alpha_1^2 + \alpha_3^2 + \alpha_5^2\\
=&1- \frac{\mu_2 (\mu_1+\mu_4)}{2}
\end{aligned}
\]
which implies (5). 
Note that (5) includes (8) and (9) with $\alpha_3, \alpha_5 \neq 0, \alpha_7=0$. 
\item[(C3)] If $- \mu_1 + \mu_4=0$, 
$\sum_{j=1}^4 \mu_j=0$ and $\mu_1 \leq \mu_2 \leq \mu_3 \leq \mu_4$ imply that 
$\mu_1 = \mu_2 = \mu_3 = \mu_4 =0$, 
which contradicts $\mu_3 + \mu_4 \neq 0$. 
\end{itemize}
\end{itemize}

\item[(ii)] \textbf{The case where $\alpha_3, \alpha_7 \neq 0$ and $\alpha_5=0$. }
In this case, we obtain (10) with $\alpha_3, \alpha_7 \neq 0, \alpha_5=0$, 
(6) and (9) with $\alpha_3, \alpha_7 \neq 0, \alpha_5=0$. 
Note that (6) includes (8) with $\alpha_3, \alpha_7 \neq 0, \alpha_5=0$. 
\item[(iii)] \textbf{The case where $\alpha_5, \alpha_7 \neq 0$ and $\alpha_3=0$. }
In this case, 
we obtain (10) with $\alpha_5, \alpha_7 \neq 0, \alpha_3=0$ and (7). 
Note that (7)-(b) and (7)-(c) include (8) and (9) with $\alpha_5, \alpha_7 \neq 0, \alpha_3=0$, respectively. 
\end{itemize}

\vspace{2mm}
\noindent
(IV) \textbf{The case where $\alpha_3, \alpha_5, \alpha_7 \neq 0$. }
In this case, \eqref{eq:sol F12}, \eqref{eq:sol F13}, \eqref{eq:sol F14} are equivalent to 
\begin{align} \label{eq:sol F19}
\begin{split}
2|\alpha|^2-2 &= -(\mu_1 \mu_2 + \mu_3 \mu_4), \\
2|\alpha|^2-2 &= -(\mu_1 \mu_3 + \mu_2 \mu_4), \\
2|\alpha|^2-2 &= -(\mu_1 \mu_4 + \mu_2 \mu_3), 
\end{split}
\end{align}
respectively. 
Substituting these into \eqref{eq:sol F11} as above, we obtain 
\begin{align} \label{eq:sol F110}
(\mu_3 + \mu_4) \left \{ 2 (\mu_3 + \mu_4) \alpha_1 
+ (\mu_2 + \mu_3) (\mu_2 + \mu_4) \right \} &=0, \\
\label{eq:sol F111}
(\mu_2 + \mu_4) \left \{ 2 (\mu_2 + \mu_4) \alpha_1 
+ (\mu_2 + \mu_3) (\mu_3 + \mu_4) \right \} &=0, \\
\label{eq:sol F112}
(\mu_2 + \mu_3) \left \{ 2 (\mu_2 + \mu_3) \alpha_1 
+ (\mu_2 + \mu_4) (\mu_3 + \mu_4) \right \} &=0. 
\end{align}
Now, we further divide the situation. 
\begin{itemize}
\item[(i)] Suppose that $\mu_3 + \mu_4=0$. 
Then, by the same argument as above, 
we have $\mu_1=\mu_2=\mu_3=\mu_4=0$ and $|\alpha|^2=1$, 
which is (10) with $\alpha_3, \alpha_5, \alpha_7 \neq 0$. 
\item[(ii)] If $\mu_2 + \mu_4=0$, 
we have $\mu_1 + \mu_3=0$. Then, 
\eqref{eq:sol F19} implies that 
$$
2|\alpha|^2-2=-2 \mu_1 \mu_2 = 2 \mu_1 \mu_2. 
$$
Hence, we obtain $|\alpha|^2=1$ and $\mu_1 \mu_2=0$. 
If $\mu_1=0$ or $\mu_2=-\mu_4=0$, 
$\sum_{j=1}^4 \mu_j=0$ and $\mu_1 \leq \mu_2 \leq \mu_3 \leq \mu_4$ imply that 
$\mu_1 = \mu_2 = \mu_3 = \mu_4 =0$. 
Hence, we obtain (10) with $\alpha_3, \alpha_5, \alpha_7 \neq 0$. 
\item[(iii)] Similarly, if $\mu_2 + \mu_3=0$, we obtain (10) with $\alpha_3, \alpha_5, \alpha_7 \neq 0$. 
\item[(iv)] Now, suppose that 
$\mu_2 + \mu_3, \mu_2 + \mu_4, \mu_3 + \mu_4 \neq 0$. 
Then, \eqref{eq:sol F110}, \eqref{eq:sol F111} and \eqref{eq:sol F112} 
imply that
\begin{equation}\label{eq:sol F113}
\begin{aligned}
\alpha_1
=& - \frac{(\mu_2 + \mu_3) (\mu_2 + \mu_4)}{2 (\mu_3 + \mu_4)}\\
=& - \frac{(\mu_2 + \mu_3) (\mu_3 + \mu_4)}{2 (\mu_2 + \mu_4)}
= - \frac{(\mu_2 + \mu_4) (\mu_3 + \mu_4)}{2 (\mu_2 + \mu_3)}. 
\end{aligned}
\end{equation}
Then, as in the case (III)-(i)-(C), 
\eqref{eq:sol F113} implies that 
\begin{align}
\label{eq:sol F114}
(\mu_2 + \mu_3) (\mu_2 - \mu_3) (- \mu_1 + \mu_4) &=0, \\
\label{eq:sol F115}
(\mu_2 + \mu_4) (\mu_2 - \mu_4) (- \mu_1 + \mu_3) &=0.
\end{align}
We patiently divide this situation. 
\begin{itemize}
\item[(A)] Suppose that $\mu_2 - \mu_3=0$ and $\mu_2 - \mu_4=0$. 
Then, we have 
$(\mu_1, \mu_2, \mu_3, \mu_4)=(-3 \mu, \mu, \mu, \mu)$
for $\mu > 0$. 
The equations \eqref{eq:sol F19} and \eqref{eq:sol F113} imply that 
$$
2 |\alpha|^2-2=2 \mu^2, \quad \mbox{and} \quad \alpha_1= -\mu. 
$$
Hence, we obtain (8) with $\alpha_3, \alpha_5, \alpha_7 \neq 0$. 
\item[(B)] Similarly, if we suppose $\mu_2 - \mu_3 = - \mu_1 + \mu_3=0$, we obtain (9) with $\alpha_3, \alpha_5, \alpha_7 \neq 0$. 
The equations 
$- \mu_1 + \mu_4=\mu_2 - \mu_4=0$ and  
$- \mu_1 + \mu_4=- \mu_1 + \mu_3=0$
implies that 
$\mu_1=\mu_2=\mu_3=\mu_4=0$, which contradicts 
$\mu_2 + \mu_3 \neq 0$. 
\end{itemize}
\end{itemize}
Then, the proof is completed. 
\end{proof}

Note that we can solve \eqref{eq:solsol} with respect to $\alpha_1$ explicitly. 

\begin{lemma} \label{lem:eplicit a1}
The equation \eqref{eq:solsol} holds if and only if 
$$
\alpha_1= \sqrt{\frac{\sum_{j=1}^4 \mu_j^2 + 4}{3}}
\cos \left( \frac{1}{3} \arccos 
\left( \frac{-3 \sqrt{3} \sum_{1 \leq i < j <k \leq 4} \mu_i \mu_j \mu_k}
{\left( \sum_{j=1}^4 \mu_j^2+4 \right)^{3/2}} \right) + \frac{2 \l \pi}{3} \right)
$$
for $\l=0,1,2$. 
\end{lemma}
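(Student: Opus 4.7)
The plan is to rearrange \eqref{eq:solsol} as a depressed cubic in $\alpha_1$ and apply Vi\`ete's trigonometric substitution. Writing $P := \sum_{j=1}^4 \mu_j^2 + 4$ and $Q := \sum_{1 \leq i < j < k \leq 4} \mu_i \mu_j \mu_k$, equation \eqref{eq:solsol} rearranges to
\[
4\alpha_1^3 - P \alpha_1 + Q = 0.
\]
I would substitute the ansatz $\alpha_1 = \sqrt{P/3}\,\cos\theta$ (which is legitimate a posteriori, since once the discriminant condition below is established every real root of the cubic lies in $[-\sqrt{P/3}, \sqrt{P/3}]$). Using the triple angle identity $4\cos^3\theta - 3\cos\theta = \cos(3\theta)$, the cubic reduces cleanly to
\[
\cos(3\theta) = -\frac{3\sqrt{3}\,Q}{P^{3/2}}.
\]
Inverting arccosine yields $\theta = \tfrac{1}{3}\arccos\bigl(-3\sqrt{3}\,Q/P^{3/2}\bigr) + \tfrac{2\pi \ell}{3}$ for $\ell = 0,1,2$, and these three angles produce all three real roots of the cubic (counted with multiplicity in the degenerate cases), matching the formula in the statement.

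The main obstacle is to verify that the argument of $\arccos$ lies in $[-1,1]$, equivalently, $27 Q^2 \leq P^3$. Here the hypothesis $\sum_j \mu_j = 0$ is essential. Consider the real-rooted polynomial $p(x) = \prod_{j=1}^4 (x - \mu_j) = x^4 + e_2 x^2 - e_3 x + e_4$, where $e_1 = \sum_j \mu_j = 0$ forces $e_2 = -\tfrac{1}{2} \sum_j \mu_j^2$ and $e_3 = Q$. Since $p$ has all four roots real (with multiplicity), Rolle's theorem implies that $p'(x) = 4x^3 + 2 e_2 x - e_3$ likewise has three real roots. The discriminant of this depressed cubic is therefore non-negative, giving $8 e_2^3 + 27 e_3^2 \leq 0$, and hence
\[
27 Q^2 \;\leq\; (-2 e_2)^3 \;=\; \Bigl(\textstyle\sum_j \mu_j^2\Bigr)^3 \;\leq\; P^3,
\]
as required. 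This closes the argument and yields the stated equivalence.
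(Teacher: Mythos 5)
Your proof is correct, and it takes the same top-level route as the paper (rewrite \eqref{eq:solsol} as the depressed cubic $4\alpha_1^3 - P\alpha_1 + Q = 0$ and invoke Vi\`ete's trigonometric formula), but the verification of the crucial bound $27Q^2 \leq \bigl(\sum_j\mu_j^2\bigr)^3$ is done by a genuinely different argument. The paper proves this bound in Lemma \ref{lem:dDT estimate sub} by the explicit change of variables $\nu_1 = \mu_2+\mu_3$, $\nu_2 = \mu_2+\mu_4$, $\nu_3 = \mu_3+\mu_4$, which turns $Q$ into $-\nu_1\nu_2\nu_3$ and $\sum_j\mu_j^2$ into $\nu_1^2+\nu_2^2+\nu_3^2$; it then combines AM--GM with $(|\nu_1|+|\nu_2|+|\nu_3|)^2 \le 3(\nu_1^2+\nu_2^2+\nu_3^2)$. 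You instead observe that the real-rooted quartic $p(x) = \prod_j(x-\mu_j)$ has $e_1 = 0$, so $p'$ is the depressed cubic $4x^3 + 2e_2x - e_3$ with $e_2 = -\tfrac12\sum_j\mu_j^2$ and $e_3 = Q$; Rolle's theorem forces $p'$ to be real-rooted, hence $8e_2^3 + 27e_3^2 \le 0$, which is precisely the bound. Both routes use the constraint $\sum_j\mu_j = 0$ in an essential way, but your Rolle/discriminant argument is more conceptual and reveals the structural reason behind the inequality, whereas the paper's substitution is more hands-on and elementary. Either establishes the strict inequality $27Q^2 < P^3$ (since $P = \sum_j\mu_j^2+4 > \sum_j\mu_j^2 \ge 0$), so the $\arccos$ is well-defined and your parenthetical remark about all real roots lying in $[-\sqrt{P/3},\sqrt{P/3}]$ is indeed justified by the standard analysis of the critical values of $t\mapsto 4t^3-Pt$.
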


\begin{proof}
We can obtain this by the Vi\`ete's formula for a cubic equation. 
To apply this, we have to show that 
$
3 \sqrt{3} \left | \sum_{1 \leq i < j <k \leq 4} \mu_i \mu_j \mu_k \right |
< \left | \sum_{j=1}^4 \mu_j^2+4 \right |^{3/2}. 
$
This follows from the following more general statement. 
\end{proof}

\begin{lemma} \label{lem:dDT estimate sub}
We have 
$$
3 \sqrt{3} \left | \sum_{1 \leq i < j <k \leq 4} \mu_i \mu_j \mu_k \right | 
\leq \left | \sum_{j=1}^4 \mu_j^2 \right |^{3/2}. 
$$
\end{lemma}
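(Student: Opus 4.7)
The plan is to translate the inequality into a statement about the polynomial whose roots are $\mu_1,\dots,\mu_4$ and then recognize it as the classical discriminant condition for a depressed cubic to have only real roots. The standing hypothesis (inherited from Lemma~\ref{lem:2form cong} and used throughout Lemmas~\ref{lem:computation}--\ref{lem:eplicit a1}) is $\sum_{j=1}^4 \mu_j = 0$; I would state this explicitly at the start. Setting $s_2 = \sum_{i<j} \mu_i \mu_j$ and $s_3 = \sum_{i<j<k} \mu_i \mu_j \mu_k$, Newton's identity gives $\sum_{j=1}^4 \mu_j^2 = -2 s_2$, so in particular $s_2 \le 0$. Squaring both sides, the claim is equivalent to
\begin{equation*}
27\, s_3^2 \;\le\; -8\, s_2^{3}, \qquad \text{i.e.,} \qquad 8\, s_2^{3} + 27\, s_3^{2} \;\le\; 0.
\end{equation*}

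Next I would consider the monic quartic with roots $\mu_1,\dots,\mu_4$:
\begin{equation*}
p(x) \;=\; \prod_{j=1}^{4}(x-\mu_j) \;=\; x^{4} + s_2\, x^{2} - s_3\, x + s_4,
\end{equation*}
where the coefficient of $x^3$ vanishes thanks to $s_1=0$. Differentiating gives the depressed cubic
\begin{equation*}
\tfrac{1}{4} p'(x) \;=\; x^{3} + \tfrac{s_2}{2}\, x - \tfrac{s_3}{4}.
\end{equation*}
Because $p$ has four real roots, Rolle's theorem guarantees that $p'$ has three real roots (counted with multiplicity), and hence so does $\tfrac{1}{4} p'$.

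Finally I would invoke the standard fact that a depressed cubic $x^3 + P x + Q$ has all real roots if and only if its discriminant $-4 P^{3} - 27\, Q^{2}$ is non-negative. Applying this with $P = s_2/2$ and $Q = -s_3/4$ yields
\begin{equation*}
-4\left(\tfrac{s_2}{2}\right)^{\!3} - 27\left(\tfrac{s_3}{4}\right)^{\!2} \;=\; -\tfrac{1}{2}\, s_2^{3} - \tfrac{27}{16}\, s_3^{2} \;\ge\; 0,
\end{equation*}
which, after multiplying by $-16$, is exactly $8\, s_2^{3} + 27\, s_3^{2} \le 0$. Taking square roots returns the inequality in its original form. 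The argument is essentially routine symmetric-function algebra together with one discriminant identity; the only point requiring care is the use of $\sum \mu_j = 0$ (omitting it breaks the inequality, e.g.\ $\mu_1=\cdots=\mu_4=1$), so I would emphasize this assumption at the outset.
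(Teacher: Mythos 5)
Your proof is correct and takes a genuinely different route from the paper. The paper introduces $\nu_1 = \mu_2 + \mu_3$, $\nu_2 = \mu_2 + \mu_4$, $\nu_3 = \mu_3 + \mu_4$, shows via the constraint $\sum_j \mu_j = 0$ that $\sum_{i<j<k} \mu_i \mu_j \mu_k = -\nu_1 \nu_2 \nu_3$ and $\sum_j \mu_j^2 = \nu_1^2 + \nu_2^2 + \nu_3^2$, and then combines the AM--GM inequality $27|\nu_1\nu_2\nu_3| \leq (|\nu_1|+|\nu_2|+|\nu_3|)^3$ with $(|\nu_1|+|\nu_2|+|\nu_3|)^2 \leq 3(\nu_1^2+\nu_2^2+\nu_3^2)$. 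Your argument instead views $\mu_1,\dots,\mu_4$ as the roots of a monic quartic $p$ with $s_1=0$, applies Rolle to conclude that the depressed cubic $\tfrac14 p'$ has only real roots, and reads off the inequality $8 s_2^3 + 27 s_3^2 \le 0$ as the non-negativity of that cubic's discriminant. Both approaches ultimately rest on the same fact---a triple of real numbers summing to the right thing must satisfy a discriminant-type inequality---but yours identifies the structural reason (the cubic's real-rootedness is inherited from the quartic via Rolle), while the paper's is more hands-on via elementary means inequalities. Yours is arguably the cleaner conceptual explanation; the paper's avoids quoting the cubic discriminant criterion. You are also right to flag explicitly that the hypothesis $\sum_j \mu_j = 0$ is essential and is used silently in the paper (it enters through \eqref{eq:muijk} and the expansion of $\sum_j \mu_j^2$); stating it in the lemma would improve readability.
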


\begin{proof}
Set $\nu_1 = \mu_2 + \mu_3 , \nu_2 = \mu_2 + \mu_4$ and $\nu_3= \mu_3 + \mu_4$. 
Then, by \eqref{eq:muijk}, we have 
$
\sum_{1 \leq i < j <k \leq 4} \mu_i \mu_j \mu_k = - \nu_1 \nu_2 \nu_3. 
$
We also compute 
\begin{align*}
\sum_{j=1}^4 \mu_j^2
=& (\mu_2 + \mu_3 + \mu_4)^2 + \mu_2^2 + \mu_3^2 + \mu_4^2 \\
=& (\mu_2 + \mu_3)^2 + (\mu_2 + \mu_4)^2 + (\mu_3 + \mu_4)^2 
= \nu_1^2+\nu_2^2+\nu_3^2.
\end{align*}
By the inequality of arithmetic and geometric means, we have 
$$
27 |\nu_1 \nu_2 \nu_3| \leq (|\nu_1|+ |\nu_2| + |\nu_3|)^3. 
$$
We also have 
$3 (|\nu_1|^2 + |\nu_2|^2 + |\nu_3|^2) - (|\nu_1|+ |\nu_2| + |\nu_3|)^2 
= (|\nu_1|-|\nu_2|)^2 + (|\nu_2|-|\nu_3|)^2+ (|\nu_3|-|\nu_1|)^2 \geq 0$. 
Hence, we obtain 
$$
27 |\nu_1 \nu_2 \nu_3| \leq \left( 3 (|\nu_1|^2 + |\nu_2|^2 + |\nu_3|^2) \right)^{3/2}
= 3 \sqrt{3} \left( |\nu_1|^2 + |\nu_2|^2 + |\nu_3|^2 \right)^{3/2}. 
$$
\end{proof}

Now, we show Proposition \ref{prop:1+F 00 Spin7}.

\begin{proof}[Proof of Proposition \ref{prop:1+F 00 Spin7}]
Since 
$\pi^2_7 \left(F - * F^3/6 \right)$, 
$* F^4/24$ and $\pi^4_7 (F^2)$
are $\Sp$-equivariant, 
we may assume that 
$F$ is of the form of the right hand side of \eqref{eq:2form cong}. 
Thus, we may show that  
$1 - * F^4/24=0$ or $\pi^4_7 (F^2) =0$
for all cases given in Lemma \ref{lem:sol F1}.

We now show that $1 - * F^4/24=0$ for case (2). 
By Lemma \ref{lem:computation} (3), we have 
$$
1 - \frac{* F^4}{24}
= 1-|\alpha|^4 - \mu_1 \mu_2 \mu_3 \mu_4 +I_1 + I_2, 
$$
where 
\begin{align*}
I_1 &=- |\alpha|^2 (\mu_1 \mu_2 + \mu_3 \mu_4), \\
I_2 &=
- \alpha_1^2 (\mu_1 \mu_3 + \mu_2 \mu_4 + \mu_1 \mu_4 + \mu_2 \mu_3) 
- \alpha_1 \sum_{1 \leq i < j <k \leq 4} \mu_i \mu_j \mu_k \\
&=
- \alpha_1^2 (\mu_1 + \mu_2) (\mu_3 + \mu_4) 
+ \alpha_1 (\mu_2 + \mu_3) (\mu_2 + \mu_4) (\mu_3 + \mu_4), 
\end{align*}
The last equality holds by \eqref{eq:muijk}.
Then, we see that 
$I_1= |\alpha|^2 (2  |\alpha|^2-2)$ and 
\begin{align*}
I_2=&
\alpha_1^2 (\mu_3 + \mu_4)^2 
+ \alpha_1 (\mu_2 + \mu_3) (\mu_2 + \mu_4) (\mu_3 + \mu_4)\\
=&
- \frac{1}{4} (\mu_2 + \mu_3)^2 (\mu_2 + \mu_4)^2 \\
=&
- \frac{1}{4} (\mu_2 (\mu_2 + \mu_3 + \mu_4) + \mu_3 \mu_4)^2 \\
=&
- \frac{1}{4} (- \mu_1 \mu_2 + \mu_3 \mu_4)^2. 
\end{align*}
Hence, we have 
\begin{align*}
1 - \frac{* F^4}{24}
&=
1-|\alpha|^4 - \mu_1 \mu_2 \mu_3 \mu_4 +|\alpha|^2 (2  |\alpha|^2-2) 
- \frac{1}{4} (- \mu_1 \mu_2 + \mu_3 \mu_4)^2 \\
&=
(|\alpha|^2-1)^2 - \frac{1}{4} (\mu_1 \mu_2 + \mu_3 \mu_4)^2=0. 
\end{align*}
We can compute in the other cases similarly, 
and obtain the following Table \ref{table1}. 
By this table, the proof is completed. 
\begin{table}[htbp]
\begin{center}
\begin{tabular}{|l|c|r|}
\hline
Case & $1 - * F^4/24$ & $\pi^4_7 (F^2)$ \\ \hline \hline
(1)    & $1- (\alpha_1 + \mu_1) (\alpha_1 + \mu_2) (\alpha_1 + \mu_3) (\alpha_1 + \mu_4)$ 
& 0\\
(2)    & 0 &  $\neq 0$ \\
(3)-(a) & $(\mu_1+\mu_2)^2 (\mu_1-\mu_2)^2/4$ & 0 \\
(3)-(b) & 0 & $\neq 0$ \\
(4)-(a) & $(\mu_1+\mu_2)^2 (\mu_1-\mu_2)^2/4$ & 0 \\
(4)-(b) & 0 & $\neq 0$ \\
(5)     & 0 & $\neq 0$ \\
(6)     & 0 & $\neq 0$ \\
(7)-(a)  & 0 & 0 \\
(7)-(b)  & 0 & $\neq 0$ \\
(7)-(c)  & 0 & $\neq 0$ \\
(8)      & 0 & $\neq 0$ \\ 
(9)      & 0 & $\neq 0$ \\
(10)    & 0 & 0 \\
\hline
\end{tabular}
\end{center}
\caption{} \label{table1}
\end{table}
\end{proof}

By Lemma \ref{lem:sol F1}, we obtain the following simpler form 
of a 2-form $F \in \Lambda^2 W^*$ satisfying  
$\pi^2_7 \left(F - * F^3/6 \right) = 0$ and $\pi^4_7 (F^2)=0$. 

\begin{corollary} \label{cor:sol F1 F2}
If a 2-form $F \in \Lambda^2 W^*$ satisfies 
$\pi^2_7 \left(F - * F^3/6 \right) = 0$ and $\pi^4_7 (F^2)=0$, 
there exists $h \in \Sp$ such that 
\begin{align} \label{eq:sol F1 F2}
h^*F = 2 \lambda^2(\alpha_1 e^1) + \mu_1 e^{01} + \mu_2 e^{23} 
+ \mu_3 e^{45} + \mu_4 e^{67}, 
\end{align}
where $\alpha_1, \mu_j \in \R, \sum_{j=1}^4 \mu_j =0, 
\mu_1 \leq \mu_2 \leq \mu_3 \leq \mu_4$ and 
\eqref{eq:solsol} holds. 
\end{corollary}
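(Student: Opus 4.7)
My plan is to combine the normal form of Lemma~\ref{lem:2form cong}, the case enumeration of Lemma~\ref{lem:sol F1}, and the table in the proof of Proposition~\ref{prop:1+F 00 Spin7}, then eliminate $\alpha_3, \alpha_5, \alpha_7$ by an additional $\Sp$-rotation that exploits the degeneracies of the $\mu_j$.

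First I would apply Lemma~\ref{lem:2form cong} to get $h_1 \in \Sp$ with
$$h_1^* F = 2\lambda^2(\alpha) + \mu_1 e^{01} + \mu_2 e^{23} + \mu_3 e^{45} + \mu_4 e^{67},$$
where $\alpha = \alpha_1 e^1 + \alpha_3 e^3 + \alpha_5 e^5 + \alpha_7 e^7$, $\sum_j \mu_j = 0$, and $\mu_1 \le \mu_2 \le \mu_3 \le \mu_4$. Because the projections $\pi^k_\l$ and the Hodge star are $\Sp$-equivariant, both hypotheses pass to $h_1^* F$. Lemma~\ref{lem:sol F1} then places $h_1^* F$ into one of ten cases, and Table~\ref{table1} in the proof of Proposition~\ref{prop:1+F 00 Spin7} shows that only cases (1), (3)-(a), (4)-(a), (7)-(a), and (10) additionally satisfy $\pi^4_7(F^2) = 0$.

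Case (1) already has the claimed shape, with the case-(1) hypothesis being exactly \eqref{eq:solsol}. For each of the remaining four cases the plan is to find $h_2 \in \Sp$ which (i) fixes the diagonal part $F_{21}$ and (ii) rotates $\alpha$ into $\R e^1$, then take $h = h_1 h_2$. The existence of such $h_2$ relies on the enlarged stabilizer of $F_{21}$ produced by the case-specific degeneracy of $(\mu_j)$: in case (10), $F_{21} = 0$ is fixed by all of $\Sp$, which acts on $V^* \cong W_7$ as $\mathrm{SO}(7)$ (transitive on spheres); in case (7)-(a) the pair equalities $\mu_1 = \mu_2$, $\mu_3 = \mu_4$ give a large unitary-type subgroup of the stabilizer moving $\alpha$ into $\R e^1$; and in cases (3)-(a) and (4)-(a) the zero-sum relations $\mu_2 + \mu_4 = 0$ or $\mu_2 + \mu_3 = 0$ provide involutive symmetries of pairs of coordinate $2$-planes sufficient to absorb the single nonzero off-axis $\alpha_j$.

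After applying $h_2$ one must verify that the new $\alpha_1$ satisfies \eqref{eq:solsol}. Using Lemma~\ref{lem:computation}~(3) and the case-specific relations from Lemma~\ref{lem:sol F1}, this reduces to a short arithmetic check; for example in case (7)-(a) one has $\sum_j \mu_j^2 = 4\mu^2 = 4(|\alpha|^2 - 1)$ and $\sum_{i<j<k}\mu_i\mu_j\mu_k = 0$, so \eqref{eq:solsol} becomes $0 = 0$, and the remaining three cases are analogous. The main obstacle I expect is Step~(i)--(ii), the explicit construction of $h_2$ outside the $\mathrm{SU}(4)$-torus already used in Lemma~\ref{lem:2form cong}: one must produce $\Sp$-elements that simultaneously respect the $\Sp$-structure and the block structure of $F_{21}$, and the cleanest route seems to be to combine the generator of $\Sp \setminus G_2$ with suitable elements of $\mathrm{U}(2) \times \mathrm{U}(2) \subset \mathrm{U}(4)$ acting on the paired $2$-planes distinguished by the degeneracy.
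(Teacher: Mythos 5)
Your proposal follows the paper's proof structure exactly: normalize via Lemma~\ref{lem:2form cong}, narrow to cases (1), (3)-(a), (4)-(a), (7)-(a), (10) via Lemma~\ref{lem:sol F1} and Table~\ref{table1}, and absorb the remaining $\alpha_j$ using $\Sp$-elements that stabilize $F_{21}$. Case (10) is handled correctly by the transitive $\mathrm{SO}(7)$-action, and your remark that \eqref{eq:solsol} then holds automatically is also correct (it is forced by $\Sp$-invariance together with Lemma~\ref{lem:computation}~(2), so no arithmetic check is needed).

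The gap is exactly where you flag it: the explicit construction of $h_2$ in cases (3)-(a), (4)-(a), (7)-(a), and both of your guesses miss the key mechanism. The required stabilizer elements are not involutions: to rotate, say, $\alpha_1 e^1 + \alpha_5 e^5$ continuously into $\R e^1$ you need a one-parameter subgroup. Nor does $\mathrm{U}(2)\times\mathrm{U}(2)$ inside the $\mathrm{SU}(4)$ of Lemma~\ref{lem:2form cong} suffice: that torus fixes $\lambda^2(e^1)$ and only rotates $\lambda^2(e^{2j})$ with $\lambda^2(e^{2j+1})$ for $j=1,2,3$, so it cannot mix $e^1$ with $e^5$. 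The paper's device in case (3)-(a) is to pass to a \emph{different} complex structure $\zeta_0 = e^0 - \i e^4$, $\zeta_1 = e^1 + \i e^5$, $\zeta_2 = e^2 + \i e^6$, $\zeta_3 = e^3 - \i e^7$ (pairing coordinates differently from the $f^j$), under which $F_{21} = \mu_1(e^{01}-e^{45}) + \mu_2(e^{23}-e^{67})$ is built from $\zeta$-bidegree pieces; then the $\zeta$-diagonal unitary $h_2 = \mathrm{diag}(e^{\i\theta}, e^{\i\theta}, e^{-\i\theta}, e^{-\i\theta}) \in \Sp$ preserves $F_{21}$ while rotating $\mathrm{span}\{\lambda^2(e^1), \lambda^2(e^5)\}$, which kills $\alpha_5$. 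Case (4)-(a) reduces to (3)-(a) by an $\mathrm{SU}(4)$-permutation, and case (7)-(a) first applies the standard $\mathrm{SU}(2)$-action on $\mathrm{span}\{e^4,\ldots,e^7\}$, which fixes $\mu(-e^{01}-e^{23}+e^{45}+e^{67})$, to kill $\alpha_7$, and then invokes the same $\zeta$-rotation.
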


\begin{proof}
By Lemma \ref{lem:sol F1} and table \ref{table1}, 
cases satisfying the assumption of this corollary 
are (1),(3)-(a), (4)-(a), (7)-(a), (10). 
We only have to show that cases 
(3)-(a), (4)-(a), (7)-(a) and (10) are reduced to (special cases of) (1) 
by the $\Sp$-action. 

Suppose that (3)-(a) holds. 
Since $\alpha_3=\alpha_7=0$ and $\mu_2+\mu_4=\mu_1+\mu_3=0$, we see that 
$$
F_7= 2 \lambda^2(\alpha_1 e^1 + \alpha_5 e^5), \qquad 
F_{21} = \mu_1 (e^{01} - e^{45}) + \mu_2 (e^{23} - e^{67}). 
$$
Set 
$\zeta_0=e^0 - \i e^4, \zeta_1=e^1 + \i e^5, \zeta_2=e^2 + \i e^6, \zeta_3=e^3 - \i e^7$. 
Then, it is straightforward to see that 
\begin{align*}
e^{01} - e^{45} &= {\rm Re} (\overline{\zeta_0} \wedge \zeta_1), \\
e^{23} - e^{67} &=  {\rm Im} (\zeta_2 \wedge \overline{\zeta_3}), \\
2 \lambda^2 (e^1) + 2 \i \lambda^2 (e^5) 
&= \zeta_0 \wedge \zeta_1 + \overline{\zeta_2 \wedge \zeta_3}
\end{align*}
and 
$$
\Phi 
= \frac{1}{2} \left( \frac{\i}{2} \sum_{j=0}^3 \zeta_j \wedge \overline {\zeta}_j \right)^2
+ {\rm Re} (\zeta_0 \wedge \zeta_1 \wedge \zeta_2 \wedge \zeta_3). 
$$
Hence, for $\theta \in \R$, setting $h:W \to W$ by 
$$
\left( h (\zeta_0), h (\zeta_1), h (\zeta_2), h (\zeta_3) \right)
= 
\left( e^{\i \theta} \zeta_0, \ e^{\i \theta} \zeta_1, \ 
e^{-\i \theta} \zeta_2, \ e^{-\i \theta} \zeta_3 \right), $$
we see that 
$h \in \Sp$, $h$ preserves $e^{01} - e^{45}, e^{23} - e^{67}$ and 
$$
h^* \left ( \lambda^2 (e^1) + \i \lambda^2 (e^5) \right)
= e^{2 \i \theta} \left ( \lambda^2 (e^1) + \i \lambda^2 (e^5) \right). 
$$
This implies that $h$ rotates the plane 
${\rm span} \{ \lambda^2 (e^1), \lambda^2 (e^5) \}$ fixing $F_{21}$, 
and hence, the case (3)-(a) is reduced to a special case of (1) by the $\Sp$-action. 

Since we can permute $\{ \mu_j \}$ by the ${\rm SU}(4)$-action, 
the case (4)-(a) is reduced to the case (3)-(a). 
Hence, by the same argument as above, 
the case (4)-(a) is reduced to a special case of (1) by the $\Sp$-action.

Suppose that (7)-(a) holds. Then, 
$$
F_7= 2 \lambda^2(\alpha_1 e^1 + \alpha_5 e^5 + \alpha_7 e^7), \qquad 
F_{21} = \mu (- e^{01} - e^{23} + e^{45} + e^{67}). 
$$
By the standard ${\rm SU}(2)$-action acting on ${\rm span} \{ e^4, e^5, e^6, e^7\}$, 
which preserves $F_{21}$, we can reduce the case (7)-(a) 
to the case of $\alpha_7=0$. 
Then, by the same argument as above, 
the case (7)-(a) is reduced to a special case of (1) by the $\Sp$-action. 

Suppose that (10) holds. Since $F_{21}=0$, 
there exists $h \in \Sp$ such that $h^* \alpha = e^1$. 
Hence, the case (10) is reduced to a special case of (1) by the $\Sp$-action. 
\end{proof}

\begin{remark}
By Lemma \ref{lem:eplicit a1} and Corollary \ref{cor:sol F1 F2}, 
we see that for any solution 
$F=F_7 + F_{21} \in \Lambda^2_7 W^* \oplus \Lambda^2_{21} W^*$ 
of $\pi^2_7 \left(F - * F^3/6 \right) = 0$ and $\pi^4_7 (F^2)=0$, 
$F_7$ is controlled by $F_{21}$. 
This fact is also reflected in Corollary \ref{cor:dDT estimate}. 
\end{remark}

\begin{theorem}\label{thm:1+F Spin7 neq}
For a 2-form $F \in \Lambda^2 W^*$ 
define $T_F: \Lambda^2 W^* \rightarrow \Lambda^2_7 W^*$ 
and $S_F: \Lambda^2 W^* \rightarrow \Lambda^4_7 W^*$ 
by 
$$
T_F (\beta) =\pi^2_7 \left( \beta - * \left( \frac{F^2}{2} \wedge \beta \right) \right), 
\qquad
S_F (\beta) =2 \pi^4_7 \left( F \wedge \beta \right). 
$$
\begin{enumerate}
\item
If $\pi^2_7 \left(F - * F^3/6 \right) = 0$ and $1-*F^4/24 \neq 0$ 
(then $\pi^4_7(F^2)=0$ is satisfied by Proposition \ref{prop:1+F 00 Spin7}), 
we have 
$$
\ker T_F = (I + F^\sharp)^* (\Lambda^2_{21} W^*). 
$$
In other words, 
there exists an isomorphism $P_F:\Lambda^2_7 W^* \rightarrow \Lambda^2_7 W^*$ 
such that 
$$
T_F 
= P_F \circ \pi^2_7 \circ ((I + F^\sharp)^{-1})^* 
= P_F \circ ((I + F^\sharp)^{-1})^* \circ \pi^2_{7, \Phi_F}, 
$$
where $\pi^2_{7, \Phi_F}$ is defined by \eqref{eq:proj2} for $\sigma=I + F^\sharp$. 
\item
If $\pi^2_7 \left(F - * F^3/6 \right) = 0$ and $\pi^4_7(F^2)=0$, we have 
$$
\ker T_F \cap \ker S_F = (I + F^\sharp)^* (\Lambda^2_{21} W^*). 
$$
In other words, 
there exists an isomorphism 
$Q_F:\Lambda^2_7 W^* \rightarrow \Im (T_F, S_F)$ 
such that 
$$
(T_F, S_F) 
= Q_F \circ \pi^2_7 \circ ((I + F^\sharp)^{-1})^* 
= Q_F \circ ((I + F^\sharp)^{-1})^* \circ \pi^2_{7, \Phi_F}. 
$$
\end{enumerate}
\end{theorem}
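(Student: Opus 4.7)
The plan is to prove both parts of Theorem \ref{thm:1+F Spin7 neq} by reducing $F$ to a canonical form via $\Sp$-equivariance and then verifying the kernel description in this form. First, I would check that $T_F$, $S_F$, and the subspace $(I+F^\sharp)^*(\Lambda^2_{21}W^*)$ are all $\Sp$-equivariant in $F$: for $h \in \Sp$, the $\Sp$-invariance of $\Phi$, $g$, and $*$ yields $T_{h^*F} = h^* \circ T_F \circ (h^{-1})^*$, and analogous formulas for $S$ and for $(I+(h^*F)^\sharp)^* = h^* \circ (I+F^\sharp)^* \circ (h^{-1})^*$. Since Proposition \ref{prop:1+F 00 Spin7} shows that the algebraic hypothesis of part (1) already forces $\pi^4_7(F^2) = 0$, Corollary \ref{cor:sol F1 F2} applies to both parts, and it suffices to assume
$$F = \nu_1 e^{01} + \nu_2 e^{23} + \nu_3 e^{45} + \nu_4 e^{67}, \qquad \nu_j = \alpha_1 + \mu_j,$$
with $\sum_j \mu_j = 0$ and \eqref{eq:solsol} satisfied. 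In this form $F^\sharp$ is block-diagonal with $2\times 2$ skew blocks of size $\nu_j$, so $(I+F^\sharp)^*$ acts on coordinate $2$-forms $e^{ij}$ by small, explicit matrices.

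The content of the theorem then splits into two steps. The first step is the inclusion $(I+F^\sharp)^*(\Lambda^2_{21}W^*) \subseteq \ker T_F$ for part (1), and $\subseteq \ker T_F \cap \ker S_F$ for part (2). To prove this, I would take an explicit spanning set of $\Lambda^2_{21}W^*$ via Lemmas \ref{lem:lambdas} and \ref{lem:lambda221}, apply $(I+F^\sharp)^*$ to each generator, and directly check that $T_F$ and $S_F$ both vanish on each resulting $2$-form. Each such vanishing reduces to a polynomial identity in $\alpha_1, \mu_1, \ldots, \mu_4$ that should follow from $\sum_j \mu_j = 0$, from the relation \eqref{eq:solsol} coming from $\pi^2_7(F - *F^3/6) = 0$, and, for part (2), from the vanishing conditions for $\pi^4_7(F^2)$ recorded in Lemma \ref{lem:computation}(1).

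The second step is to upgrade the inclusion to an equality by a dimension count. Since $(I+F^\sharp)^*$ is bijective, the left-hand side has dimension $21$, so equality follows once one shows that $P_F := T_F \circ (I+F^\sharp)^*|_{\Lambda^2_7 W^*}$ is injective (part (1)) or that $(P_F, Q_F)$ is injective with $Q_F := S_F \circ (I+F^\sharp)^*|_{\Lambda^2_7 W^*}$ (part (2)). For part (1), the strategy is to compute the matrix of $P_F$ in the basis $\{\lambda^2(e^\mu)\}_{\mu=1}^7$ of $\Lambda^2_7 W^*$ and to show that $\det P_F$ is a non-zero constant multiple of a positive power of $1 - *F^4/24$, using the explicit formula of Lemma \ref{lem:computation}(3); this is the $\Sp$ analogue of the Harvey--Lawson identity alluded to at the opening of the appendix, and yields bijectivity under the hypothesis $1 - *F^4/24 \neq 0$. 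Part (2) is similar, with the additional map $S_F$ providing injectivity on the component where $P_F$ may degenerate. The factorizations $T_F = P_F \circ \pi^2_7 \circ ((I+F^\sharp)^{-1})^*$ and its $(T_F, S_F)$ analogue are then immediate from the kernel description, and their rewriting in terms of $\pi^2_{7, \Phi_F}$ is an application of \eqref{eq:proj2}.

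The main obstacle is the bookkeeping in the first step: $(I+F^\sharp)^*$ couples the four coordinate $2$-planes through every off-diagonal $e^{ij}$, producing many cross-terms whose cancellation against the $*(F^2/2 \wedge \cdot)$ and $F \wedge \cdot$ contributions inside $T_F$ and $S_F$ is not a priori transparent. As the authors themselves note for related calculations in this appendix, a computer algebra system appears essentially unavoidable for confirming the polynomial identities that arise.
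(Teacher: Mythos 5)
Your high-level strategy coincides with the paper's: reduce $F$ to the canonical diagonal form of Corollary \ref{cor:sol F1 F2} by $\Sp$-equivariance, verify the inclusion $(I+F^\sharp)^*(\Lambda^2_{21}W^*)\subset\ker T_F\cap\ker S_F$ by testing against the generators of $\Lambda^2_{21}W^*$ from Lemma \ref{lem:lambda221} and reducing to polynomial identities in $C_j=\alpha_1+\mu_j$ using $\sum\mu_j=0$ and \eqref{eq:xi=0}, and then conclude equality by showing $P_F:=T_F\circ(I+F^\sharp)^*|_{\Lambda^2_7W^*}$ (resp.\ $(T_F,S_F)\circ(I+F^\sharp)^*|_{\Lambda^2_7W^*}$) is injective via the matrix in the basis $\{\lambda^2(e^\mu)\}$. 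This is exactly the structure of the paper's argument.

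One step in your plan does not survive contact with the computation. You expect $\det P_F$ to be ``a non-zero constant multiple of a positive power of $1-*F^4/24$,'' but this factorization is false. The matrix of $P_F$ block-diagonalizes into a $1\times1$ block (for $\eta_1$) and three rotation-type $2\times 2$ blocks. After imposing \eqref{eq:xi=0} the $\eta_1$ factor is $4\big(C_1C_2C_3C_4-\sum_{i<j}C_iC_j+1\big)$, which equals $4\big(1-\tfrac12\langle F^2,\Phi\rangle+*F^4/24\big)$ and is \emph{always} nonzero (Lemma \ref{lem:nonzero}), independently of $*F^4/24$; the $\{2,3\}$ block has determinant $16(C_3C_4-1)^2(C_1^2+1)(C_2^2+1)$ (after simplifying with \eqref{eq:xi=0}), and similarly for the other two pairs. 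None of these is a power of $1-*F^4/24 = 1-C_1C_2C_3C_4$. What is true is that, under $\xi=0$, each $2\times2$ block determinant vanishes only if $C_iC_j=1$ for the corresponding index pair, and the constraint then forces the complementary product to equal $1$ as well, hence $*F^4/24=1$. So the conclusion you want (nonvanishing of $\det P_F$ under $*F^4/24\neq1$, plus $S_F$ compensating for degeneracy in part (2)) is correct, but the route is a case analysis, not a divisibility statement. Similarly, the ``Harvey--Lawson identity'' you invoke is not an identity the paper uses for this step; the Harvey--Lawson reference in the appendix concerns Proposition \ref{prop:1+F 00 Spin7}, the implication from the first dDT equation to the second, not a determinant formula for $P_F$.

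These are corrections to expectations rather than to strategy: once you carry out the block computations you would be forced into the paper's case analysis, and the rest of your outline (equivariance reduction, testing generators, dimension count, the passage to the $\pi^2_{7,\Phi_F}$ factorization via \eqref{eq:proj2}) matches the paper faithfully.
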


\begin{proof}
Since $F$ satisfies $\pi^2_7 \left(F - * F^3/6 \right) = 0$ and $\pi^4_7(F^2)=0$ 
in all cases, 
we may assume that $F$ is of the form of the right hand side of \eqref{eq:sol F1 F2} 
and satisfies \eqref{eq:solsol} by Corollary \ref{cor:sol F1 F2}. 

First, we show that 
$\ker T_F \cap \ker S_F \supset (I + F^\sharp)^* (\Lambda^2_{21} W^*)$
without any assumptions on $1-*F^4/24$. 
Set 
$$
C_j = \alpha_1 + \mu_j \qquad j =1, \cdots, 4. 
$$
Then, we see that 
$$
F=C_1 e^{01} + C_2 e^{23} + C_3 e^{45} + C_4 e^{67}. 
$$
Set 
$$
\xi= \sum_{1 \leq i<j<k \leq 4} C_i C_j C_k - \sum_{j=1}^4 C_j. 
$$
Note that \eqref{eq:solsol} is equivalent to 
\begin{align}\label{eq:xi=0}
\xi=0. 
\end{align}
Indeed, since 
\begin{align*}
C_i C_j C_k 
&=
\left( \alpha _{1}+\mu _{i}\right) \left( \alpha _{1}+\mu _{j}\right) \left( \alpha _{1}+\mu _{k}\right) \\
&= 
\alpha _{1}^{3}+\left( \mu _{i}+\mu _{j}+\mu _{k}\right) \alpha _{1}^{2}
+\left( \mu _{i}\mu _{k}+\mu _{j}\mu _{k}+\mu _{i}\mu _{j}\right) \alpha _{1} 
+\mu _{i}\mu _{j}\mu _{k}
\end{align*}
and 
\begin{align*}
\sum_{1 \leq i<j<k \leq 4} \left( \mu _{i}+\mu _{j}+\mu _{k}\right)
&= 3 \left( \mu_1+\mu _{2}+\mu _{3} + \mu_4 \right)
=0, \\
\sum_{1 \leq i<j<k \leq 4} \left( \mu _{i}\mu _{k}+\mu _{j}\mu _{k}+\mu _{i}\mu _{j}\right)
&= 2 \sum_{1 \leq i<j \leq 4} \mu_i \mu_j
= - \sum_{i=1}^4 \mu_i^2,  
\end{align*}
we have 
$$
\xi = 4 \alpha_1^3 - \left( \sum_{i=1}^4 \mu_i^2 \right) \alpha_1 
+ \sum_{1 \leq i<j<k \leq 4} \mu_i \mu_j \mu_k
-4 \alpha_1, 
$$
which is the left hand side of \eqref{eq:solsol}. 

Fix $u,v \in V=\R^7$ and set 
$$
\tau =e^{0}\wedge \varphi \left( u,v,\cdot \right) -u^{b}\wedge v^{b}. 
$$
By Lemma \ref{lem:lambda221}, $\Lambda^2_{21} W^*$ 
is spanned by elements of the form $\tau$. 
Set
\begin{equation}\label{202012271829}
T_\mu= 
\left \langle \left( I+F^{\sharp}\right) ^{\ast }\tau -* \left( \dfrac{F^{2}}{2}\wedge \left( I+F^{\sharp}\right) ^{\ast }\tau \right), 2\lambda ^{2}\left( e^{\mu }\right) \right \rangle 
\end{equation}
for $\mu=1,\cdots,7$. 
Then, $\left( I+F^{\sharp}\right) ^{\ast }\tau \in \ker T_F$ 
if and only if $T_\mu=0$ for any $\mu=1,\cdots,7$. 
Using 
$\left( I+F^{\sharp}\right) ^{\ast }\alpha =\alpha -i\left( \alpha ^{\sharp} \right) F$
for $\alpha \in W^*$, direct but lengthy computations show that 
\begin{align*}
T_{1} &=\xi ( -C_{1}\varphi \left( e_{1},u,v\right) +C_{2}e^{23}\left( u,v\right) 
 +C_{3}e^{45}\left( u,v\right) +C_{4}e^{67}\left( u,v\right) ), \\
T_{\mu} &=  -\xi * \varphi \left( e_1, e_\mu, u,v \right)
\end{align*}
for $\mu=2, \cdots,7$. 
Then, by \eqref{eq:xi=0}, all of these vanish and hence, 
$\ker T_F \supset (I + F^\sharp)^* (\Lambda^2_{21} W^*)$. 
Similarly, set 
\begin{equation}\label{202012271830}
S_\mu= 
\left \langle F \wedge \left( I+F^{\sharp}\right) ^{\ast }\tau, 
2\lambda ^{4} \left( e^{\mu }\right) \right \rangle 
\end{equation}
for $\mu=1,\cdots,7$. 
Then, $\left( I+F^{\sharp}\right) ^{\ast }\tau \in \ker S_F$ 
if and only if $S_\mu=0$ for any $\mu=1,\cdots,7$. 
Direct but lengthy computations show that 
\begin{align*}
S_1=0, \qquad S_\mu=T_\mu=0 \quad \mbox{for} \quad \mu=2, \cdots,7. 
\end{align*}
Hence, we see that $\ker S_F \supset (I + F^\sharp)^* (\Lambda^2_{21} W^*)$. 
\\

Next, we compute $\ker T_F \cap \left( I+F^{\sharp}\right) ^{\ast } \Lambda^2_7 W^*$
and $\ker T_F \cap \ker S_F \cap \left( I+F^{\sharp}\right) ^{\ast } \Lambda^2_7 W^*$. 
We still do not assume that $*F^4/24 \neq 1$. 
Since 
$\Lambda^2 W^* = \left( I+F^{\sharp}\right) ^{\ast } \Lambda^2_7 W^* 
\oplus \left( I+F^{\sharp}\right) ^{\ast } \Lambda^2_{21} W^*$ 
and 
$\ker T_F \cap \ker S_F \supset (I + F^\sharp)^* \Lambda^2_{21} W^*$, 
we see that 
\begin{align*}
\ker T_F 
=&\left( \ker T_F \cap \left( I+F^{\sharp}\right) ^{\ast } \Lambda^2_7 W^* \right)
\oplus \left( I+F^{\sharp} \right)^{\ast } \Lambda^2_{21} W^*,  \\
\ker T_F \cap \ker S_F 
=& \left( \ker T_F \cap \ker S_F \cap(I + F^\sharp)^* \Lambda^2_{7} W^* \right) 
\oplus \left( I+F^{\sharp} \right)^{\ast } \Lambda^2_{21} W^*. 
\end{align*}
Fix $\eta =\sum ^{7}_{\mu=1}\eta _{\mu}e^{\mu}$ for $\eta_\mu \in \R$. 
Similarly to \eqref{202012271829} and \eqref{202012271830}, set 
\begin{align*}
T'_\mu &= 
\left \langle \left( I+F^{\sharp}\right) ^{\ast } \left( 2 \lambda^2(\eta) \right) 
-* \left( \dfrac{F^{2}}{2}\wedge \left( I+F^{\sharp}\right) ^{\ast } \left( 2 \lambda^2(\eta) \right) \right), 2\lambda ^{2}\left( e^{\mu }\right) \right \rangle, \\
S'_\mu &= 
\left \langle F \wedge \left( I+F^{\sharp}\right) ^{\ast } \left( 2 \lambda^2(\eta) \right), 
2\lambda ^{4} \left( e^{\mu }\right) \right \rangle 
\end{align*}
for $\mu=1,\cdots,7$. 
Note that $\ker T_F \cap (I + F^\sharp)^* \Lambda^2_{7} W^* = \{\,0\,\}$ 
if and only if $T'_{\mu}=0$ for any $\mu=1,\cdots,7$ implies $\eta=0$. 
Similarly, 
$\ker T_F \cap \ker S_F \cap (I + F^\sharp)^* \Lambda^2_{7} W^* = \{\,0\,\}$ 
if and only if $T'_{\mu}=S'_{\mu}=0$ for any $\mu=1, \cdots ,7$ implies $\eta=0$. 
By direct but lengthy computations, we obtain 
\footnotesize
\begin{align}
T_1'&= 4 \eta _{1} \left( C_1 C_2 C_3 C_4 - \sum_{1 \leq i<j \leq 4} C_i C_j +1 \right), 
\label{eq:T1'}\\
T_2'&= 
4 (C_1 C_2-1) (C_3 C_4 -1) \eta_2 
+ 2 \left \{ (C_1 C_2-1) (C_3+C_4) + (C_1 + C_2) (-C_3 C_4+1) \right \} \eta_3, 
\label{eq:T2'}\\
T_3'&= 
- 2 \left \{ (C_1 C_2-1) (C_3+C_4) + (C_1 + C_2) (-C_3 C_4+1) \right \} \eta_2 
+ 4 (C_1 C_2-1) (C_3 C_4 -1) \eta_3, \label{eq:T3'}\\
T_4'&= 
4 (C_1 C_3-1) (C_2 C_4 -1) \eta_4 
+ 2 \left \{ (C_1 C_2+1) (C_3-C_4) + (C_1 - C_2) (C_3 C_4+1) \right \} \eta_5, \label{eq:T4'}\\
T_5'&= 
- 2 \left \{ (C_1 C_2+1) (C_3-C_4) + (C_1 - C_2) (C_3 C_4+1) \right \} \eta_4 
+ 4 (C_1 C_3-1) (C_2 C_4 -1) \eta_5, \label{eq:T5'}\\
T_6'&= 
4 (C_1 C_4-1) (C_2 C_3 -1) \eta_6 
+ 2 \left \{ (C_1 C_2+1) (-C_3+C_4) + (C_1 - C_2) (C_3 C_4+1) \right \} \eta_7, \label{eq:T6'}\\
T_7'&= 
- 2 \left \{ (C_1 C_2+1) (-C_3+C_4) + (C_1 - C_2) (C_3 C_4+1) \right \} \eta_6 
+ 4 (C_1 C_4-1) (C_2 C_3 -1) \eta_7. \label{eq:T7'}
\end{align}
\normalsize
Note that we use \eqref{eq:xi=0} for the computation of $T_1'$. 
Similarly, direct but lengthy computations show that
\footnotesize
\begin{align}
S_1'&= 0, \nonumber \\
S_2'&= 
- 4 (C_1 + C_2) (C_3 + C_4) \eta_2 
- 2 \left \{ (C_1 C_2-1) (C_3+C_4) + (C_1 + C_2) (-C_3 C_4+1) \right \} \eta_3, 
\label{eq:S2'}\\
S_3'&= 
2 \left \{ (C_1 C_2-1) (C_3+C_4) + (C_1 + C_2) (-C_3 C_4+1) \right \} \eta_2 
- 4 (C_1 + C_2) (C_3 + C_4) \eta_3, \label{eq:S3'}\\
S_4'&= 
-4 (C_1 + C_3) (C_2 + C_4) \eta_4 
- 2 \left \{ (C_1 C_2+1) (C_3-C_4) + (C_1 - C_2) (C_3 C_4+1) \right \} \eta_5, \label{eq:S4'}\\
S_5'&= 
2 \left \{ (C_1 C_2+1) (C_3-C_4) + (C_1 - C_2) (C_3 C_4+1) \right \} \eta_4 
-4 (C_1 + C_3) (C_2 + C_4) \eta_5, \label{eq:S5'}\\
S_6'&= 
-4 (C_1 + C_4) (C_2 + C_3) \eta_6 
- 2 \left \{ (C_1 C_2+1) (-C_3+C_4) + (C_1 - C_2) (C_3 C_4+1) \right \} \eta_7, \label{eq:S6'}\\
S_7'&= 
2 \left \{ (C_1 C_2+1) (-C_3+C_4) + (C_1 - C_2) (C_3 C_4+1) \right \} \eta_6 
- 4 (C_1 + C_4) (C_2 + C_3) \eta_7. \label{eq:S7'}
\end{align}
\normalsize
First, we show the following. 

\begin{lemma} \label{lem:nonzero}
We have 
$C_1 C_2 C_3 C_4 - \sum_{1 \leq i<j \leq 4} C_i C_j +1 \neq 0$. 
\end{lemma}

\begin{proof}
If $C_2 C_3 + C_2 C_4 + C_3 C_4 -1 \neq 0$, we have 
$$
C_1 = \frac{-C_2 C_3 C_4 + C_2 + C_3 + C_4}{C_2 C_3 + C_2 C_4 + C_3 C_4 -1} 
$$
by \eqref{eq:xi=0}. 
Then, 
\begin{align*}
&C_1 C_2 C_3 C_4 - \sum_{1 \leq i<j \leq 4} C_i C_j +1 \\
=&
\frac{-(C_2 C_3 C_4 - C_2 - C_3 - C_4)^2 - (C_2 C_3 + C_2 C_4 + C_3 C_4 -1)^2}
{C_2 C_3 + C_2 C_4 + C_3 C_4 -1}, 
\end{align*}
which is nonzero since $C_2 C_3 + C_2 C_4 + C_3 C_4 -1 \neq 0$. 

Suppose that $C_2 C_3 + C_2 C_4 + C_3 C_4 -1 = 0$. 
Then, by \eqref{eq:xi=0}, we have 
$
\xi= C_2 C_3 C_4 - (C_2 + C_3 + C_4) =0. 
$
Thus, we obtain 
$$
\begin{pmatrix}
C_{3}+C_{4} & C_{3} C_{4}-1 \\
-\left( C_{3} C_{4}-1\right)  & C_{3}+C_{4}
\end{pmatrix}\begin{pmatrix}
C_{2} \\
1
\end{pmatrix}=\begin{pmatrix}
0 \\
0
\end{pmatrix}. 
$$
Since $(C_{3}+C_{4})^2 + \left( C_{3} C_{4}-1\right)^2 >0$, 
we have $(C_2, 1) = (0,0)$, which is a contradiction. 
\end{proof}
Hence, by \eqref{eq:T1'}, 
$T_1'=0$ if and only if $\eta_1=0$. 
By \eqref{eq:T2'} and \eqref{eq:T3'}, 
$T_2'=T_3'=0$ implies $\eta_2=\eta_3=0$ 
if and only if 
\begin{align}
(C_1 C_2-1) (C_3 C_4 -1) &\neq 0 \quad \mbox{or}, \label{202012271902} \\
(C_1 C_2-1) (C_3+C_4) + (C_1 + C_2) (-C_3 C_4+1)  &\neq 0. \label{202012271903}
\end{align}
Now, we show that $T_2'=T_3'=0$ implies $\eta_2=\eta_3=0$ if $*F^4/24 \neq 1$. 
To see this, suppose that the left hand sides of \eqref{202012271902} and \eqref{202012271903} vanish.
If $C_1 C_2=1$, the assumption that the left hand side of \eqref{202012271903} vanishes implies that 
$$
(C_1 + C_2) (-C_3 C_4+1) = \frac{(C_1^2+1) (-C_3 C_4+1)}{C_1}=0. 
$$
Thus, we obtain $C_3 C_4=1$. In particular, we have 
$*F^4/24 = C_1 C_2 C_3 C_4=1$. 
Similarly, 
$C_3 C_4=1$ implies that $C_1 C_2=1$ and $*F^4/24 = C_1 C_2 C_3 C_4=1$. 
Hence, we see that 
$T_2'=T_3'=0$ implies $\eta_2=\eta_3=0$ if $*F^4/24 \neq 1$. 

Similarly, we show that $T_2'=T_3'=S_2'=S_3'=0$ implies $\eta_2=\eta_3=0$ without any assumptions on $*F^4/24$. 
By \eqref{eq:T2'}, \eqref{eq:T3'}, \eqref{eq:S2'} and \eqref{eq:S3'}, 
this is the case 
if and only if 
one of the following is nonzero. 
\begin{align*}
&(C_1 C_2-1) (C_3 C_4 -1), \\ 
&(C_1 + C_2) (C_3 + C_4), \\
&(C_1 C_2-1) (C_3+C_4) + (C_1 + C_2) (-C_3 C_4+1).
\end{align*}
To see that one of these is nonzero, suppose that all of these are 0. 
Then, by the argument above, the first and the third equation imply that 
$C_1 C_2=1$ and $C_3 C_4=1$. 
Then, 
$$
(C_1 + C_2) (C_3 + C_4) = \frac{(C_1^2+1) (C_4^2+1)}{C_1 C_3} \neq 0, 
$$
which is a contradiction.

We can discuss for $\eta_4, \eta_5, \eta_6, \eta_7$ similarly 
and we see that 
$\ker T_F = (I + F^\sharp)^* (\Lambda^2_{21} W^*)$ if 
$1-*F^4/24 \neq 0$ 
and 
$\ker T_F \cap \ker S_F = (I + F^\sharp)^* (\Lambda^2_{21} W^*)$. 
\\

Finally, we prove the last statement of (1). 
As proved above, we see that 
$$
P_F:=T_F \circ (I + F^\sharp)^*|_{\Lambda^2_7 W^*} 
: \Lambda^2_7 W^* \rightarrow \Lambda^2_7 W^*
$$
is an isomorphism if $* F^4/24 \neq 1$. 
Then, for any $\beta_7 \in \Lambda^2_7 W^*$ and 
$\beta_{21} \in \Lambda^2_{21} W^*$, 
we see that 
\begin{align*}
\left( P_F \circ \pi^2_7 \circ ((I + F^\sharp)^{-1})^* \right) 
\left( (I + F^\sharp)^* (\beta_7) \right)
&=P_F (\beta_7)
= T_F \left( (I + F^\sharp)^* (\beta_7) \right), \\
\left( P_F \circ \pi^2_7 \circ ((I + F^\sharp)^{-1})^* \right) 
\left( (I + F^\sharp)^* (\beta_{21}) \right)
&=0
= T_F \left( (I + F^\sharp)^* (\beta_{21}) \right). 
\end{align*}
Hence, we obtain $T_F = P_F \circ \pi^2_7 \circ ((I + F^\sharp)^{-1})^*$.  

Similarly, for the last statement of (2), an isomorphism 
$$
Q_F:= (T_F, S_F) \circ (I + F^\sharp)^*|_{\Lambda^2_7 W^*} 
: \Lambda^2_7 W^* \rightarrow \Im (T_F, S_F)
$$ 
satisfies the desired property. 
\end{proof}

\begin{remark}
Since 
$C_1 C_2 C_3 C_4 - \sum_{1 \leq i<j \leq 4} C_i C_j +1 =  
1 - \la F^2, \Phi \ra/2 + * F^4/24$, 
we see that 
$$
1 - \frac{1}{2} \la F^2, \Phi \ra + \frac{* F^4}{24} \neq 0
$$
for $F \in \Lambda^2 W^*$ satisfying 
$\pi^2_7 \left(F - * F^3/6 \right) = 0$ and $\pi^4_7(F^2)=0$ 
by Lemma \ref{lem:nonzero}. 
We can also prove this by the ``mirror" of the Cayley equality in \cite{KYvol}. 
\end{remark}

Finally, we give other applications of Corollary \ref{cor:sol F1 F2}. 
The following proves Proposition \ref{prop:dDT norm}. 

\begin{corollary} \label{cor:dDT estimate}
Suppose that $F \in \Lambda^2 W^*$ satisfies 
$\pi^2_7 \left(F - * F^3/6 \right) = 0$ and $\pi^4_7(F^2)=0$. 
Set $F_7= \pi^2_7 (F)$ and $F_{21} = \pi^2_{21} (F)$.  
\begin{enumerate}
\item If $F_{21}=0$, we have $F_7=0$ or $|F_7| = 2$. 
\item
We have 
\[
|F_7| \leq 2 \sqrt{\frac{|F_{21}|^2 + 4}{3}} 
\cos \left( \frac{1}{3} \arccos \left( \frac{|F_{21}|^3}{(|F_{21}|^2+4)^{3/2}} \right) \right). 
\]
\end{enumerate}
\end{corollary}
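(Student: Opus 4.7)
\medskip

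\noindent\textbf{Proof plan for Corollary \ref{cor:dDT estimate}.}
The strategy is to apply Corollary \ref{cor:sol F1 F2} to reduce to the explicit normal form, then translate \eqref{eq:solsol} into a bound via the trigonometric cubic formula of Lemma \ref{lem:eplicit a1}. Since every quantity in the statement is $\Sp$-invariant ($F_7$, $F_{21}$, and their norms are determined by $\Sp$-equivariant projections and an $\Sp$-invariant inner product), I may replace $F$ by $h^*F$ for some $h\in \Sp$ and thereby assume
\[
F = 2\lambda^2(\alpha_1 e^1) + \mu_1 e^{01}+\mu_2 e^{23}+\mu_3 e^{45}+\mu_4 e^{67}
\]
with $\sum_j \mu_j=0$, $\mu_1\le\mu_2\le\mu_3\le\mu_4$, and the cubic constraint \eqref{eq:solsol}. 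Since $\lambda^2$ is an isometry (Lemma \ref{lem:lambdas}) and the $e^{2i,2i+1}$ are orthonormal in $\Lambda^2 W^*$, I obtain $|F_7|=2|\alpha_1|$ and $|F_{21}|^2=\sum_{j=1}^4 \mu_j^2 =: M$. Writing $N:=\sum_{i<j<k}\mu_i\mu_j\mu_k$, equation \eqref{eq:solsol} takes the form
\[
4\alpha_1^3-(M+4)\alpha_1+N=0.
\]

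For part (1), $F_{21}=0$ forces $\mu_1=\mu_2=\mu_3=\mu_4=0$, so $M=N=0$ and the cubic reduces to $4\alpha_1(\alpha_1^2-1)=0$. Thus $\alpha_1\in\{0,\pm 1\}$, giving $|F_7|=2|\alpha_1|\in\{0,2\}$.

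For part (2), I invoke Lemma \ref{lem:eplicit a1}, which applies by virtue of Lemma \ref{lem:dDT estimate sub} (that lemma guarantees the argument of $\arccos$ lies in $[-1,1]$). It yields
\[
\alpha_1 = \sqrt{\tfrac{M+4}{3}}\,\cos\!\bigl(\tfrac{1}{3}\arccos(t)+\tfrac{2\ell\pi}{3}\bigr),\qquad t:=\frac{-3\sqrt{3}\,N}{(M+4)^{3/2}},\quad \ell\in\{0,1,2\}.
\]
By Lemma \ref{lem:dDT estimate sub}, $|t|\le M^{3/2}/(M+4)^{3/2}$, so setting $\phi_*:=\arccos\!\bigl(M^{3/2}/(M+4)^{3/2}\bigr)\in[0,\pi/2]$, the angle $\phi:=\arccos(t)$ lies in $[\phi_*,\pi-\phi_*]$, and $\theta:=\phi/3\in[\phi_*/3,(\pi-\phi_*)/3]\subset[0,\pi/3]$. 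The desired inequality is equivalent to
\[
\max_{\ell\in\{0,1,2\}}\ \sup_{\theta\in[\phi_*/3,(\pi-\phi_*)/3]}\ \bigl|\cos\!\bigl(\theta+\tfrac{2\ell\pi}{3}\bigr)\bigr| \ \le\ \cos(\phi_*/3).
\]

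The remaining step is a short trigonometric analysis of the three branches. For $\ell=0$, $\cos\theta$ is maximized at $\theta=\phi_*/3$, giving $\cos(\phi_*/3)$. For $\ell=1$, $\theta+2\pi/3\in[2\pi/3+\phi_*/3,\pi-\phi_*/3]$, on which $\cos$ is negative with $|\cos|$ maximized at the endpoint closer to $\pi$, again giving $\cos(\phi_*/3)$. For $\ell=2$, $\theta+4\pi/3\in[-2\pi/3,-\pi/3]\pmod{2\pi}$, and since $\phi_*\le \pi/2$ forces both endpoints to lie in an interval on which $|\cos|\le 1/2\le\cos(\phi_*/3)$, this branch never dominates. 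Combining these gives $|\alpha_1|\le\sqrt{(M+4)/3}\,\cos(\phi_*/3)$, which is the claimed bound for $|F_7|=2|\alpha_1|$. The main technical point — and the only nonobvious step — is verifying that the third branch is dominated by the other two; everything else reduces to the normal-form reduction and the Vi\`ete trigonometric formula.
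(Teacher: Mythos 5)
Your proof is correct, and it takes a genuinely different route from the paper's for part (2). You invoke Lemma \ref{lem:eplicit a1} to write the \emph{exact} roots of the cubic $4\alpha_1^3-(M+4)\alpha_1+N=0$ in trigonometric form and then bound all three branches $\ell=0,1,2$ over the admissible range of $\arccos$, using a short case analysis to show each $|\cos(\theta+2\ell\pi/3)|\le\cos(\phi_*/3)$. The paper instead bypasses Lemma \ref{lem:eplicit a1} entirely: starting from $(4\alpha_1^2-M-4)\alpha_1=-N$ it passes to the inequality $4|\alpha_1|^3-(M+4)|\alpha_1|-\frac{\sqrt{3}}{9}|F_{21}|^3\le 0$ via Lemma \ref{lem:dDT estimate sub}, defines the ``worst-case'' cubic $f(x)=4x^3-(\lambda^2+4)x-\frac{\sqrt{3}}{9}\lambda^3$, identifies its largest root $x_0$ by the Vi\`ete formula, and concludes $|\alpha_1|\le x_0$ from $f(|\alpha_1|)\le 0$ (since $f$ has positive leading coefficient). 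Each approach has a small hidden check: the paper's needs that $f(x)\le 0\Rightarrow x\le x_0$ (true because $x_0$ is the largest real root of a cubic with positive leading coefficient), while yours needs the three-branch trigonometric estimate, including the $\ell=2$ case where $|\cos|\le 1/2\le\cos(\phi_*/3)$. Your variant is more self-contained once Lemma \ref{lem:eplicit a1} is in hand; the paper's is slightly shorter because it defers the branch comparison to the standard ``largest root'' observation. Part (1) and the normal-form reduction are identical to the paper's. One very minor point: since $\alpha_1$ can be negative, make explicit (as you implicitly do by bounding $|\cos|$) that you are bounding $|\alpha_1|$, not $\alpha_1$.
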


\begin{proof}
By Corollary \ref{cor:sol F1 F2}, 
we may assume that $F$ is of the form of the right hand side of \eqref{eq:sol F1 F2} 
satisfying \eqref{eq:solsol}. 

If $F_{21}=0$, \eqref{eq:solsol} implies that $(\alpha_1^2- 1) \alpha_1 =0$. 
Since $|F_7|=2 |\alpha_1|$, we obtain (1). 
Next, we show (2). Recall that $|F_{21}|^2 = \sum_{j=1}^4 \mu_j^2$. 
By \eqref{eq:solsol}, we have 
\begin{align*}
\left(4 \alpha_1^2 - \sum_{j=1}^4 \mu_j^2 - 4 \right) |\alpha_1| 
\leq 
\left | 4 \alpha_1^2 - \sum_{j=1}^4 \mu_j^2 -4 \right | |\alpha_1| 
= 
\left | \sum_{1 \leq i < j <k \leq 4} \mu_i \mu_j \mu_k \right |. 
\end{align*}
Then, by Lemma \ref{lem:dDT estimate sub}, we obtain 
$$
4 |\alpha_1|^3 - (|F_{21}|^2+4) |\alpha_1| - \frac{\sqrt{3}}{9} |F_{21}|^3 \leq 0. 
$$
Thus, if we define a cubic polynomial $f(x)$ (with a parameter $\lambda \geq 0$) by 
\[
f(x) = 4 x^3 - \left( \lambda^2 + 4 \right) x - \frac{\sqrt{3}}{9} \lambda^3, 
\]
$x =|\alpha_1|$ satisfies $f(x) \leq 0$ for $\lambda =|F_{21}|$. 
By the Vi\`ete's formula for a cubic equation, 
the largest solution of $f(x)=0$ is given by 
\[
x = x_0 := \sqrt{ \frac{\lambda^2 + 4}{3}} 
\cos \left( \frac{1}{3} \arccos \left( \frac{\lambda^3}{(\lambda^2+4)^{3/2}} \right) \right). 
\]
Hence, we see that 
$f(x) \leq 0$ implies that $x \leq x_0$. 
This together with the equation $|F_7| = 2 |\alpha_1|$ implies (2). 
\end{proof}

The following is useful in Section \ref{sec:moduli generic}. 

\begin{lemma} \label{lem:generic pt}
Suppose that $F \in \Lambda^2 W^*$ satisfies 
$\pi^2_7 \left(F - * F^3/6 \right) = 0$, $*F^4/24 \neq 1$ and $F \neq 0$. 
Then, if $\gamma \in \Lambda^2_7 W^*$ satisfies 
\begin{align}
\pi^4_1 (F \wedge \gamma) &=0, \label{eq:L2 on2 pt}\\
\pi^4_7 \left( \left( 6F - * F^3 \right) \wedge \gamma \right) &=0, \label{eq:L2 on3 pt}\\
\pi^4_{35} \left( \left(6 F + * F^3 \right) \wedge \gamma \right) &=0,  \label{eq:L2 on4 pt}
\end{align}
we have $\gamma=0$. 
\end{lemma}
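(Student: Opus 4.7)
The plan is to reduce $F$ to canonical form via Corollary \ref{cor:sol F1 F2} and then carry out a block-by-block analysis closely mirroring the proof of Theorem \ref{thm:1+F Spin7 neq}. First, since $\pi^2_7(F - *F^3/6) = 0$ and $*F^4/24 \neq 1$, Proposition \ref{prop:1+F 00 Spin7} gives $\pi^4_7(F^2) = 0$, so Corollary \ref{cor:sol F1 F2} applies. By the $\Sp$-equivariance of the three conditions \eqref{eq:L2 on2 pt}--\eqref{eq:L2 on4 pt} we may assume
$$
F = C_1 e^{01} + C_2 e^{23} + C_3 e^{45} + C_4 e^{67}, \qquad C_j = \alpha_1 + \mu_j,
$$
with $\sum_j \mu_j = 0$, $\mu_1 \leq \cdots \leq \mu_4$, and $\xi = 0$ as in \eqref{eq:xi=0}. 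In these coordinates, $F \neq 0$ becomes $(C_1, \ldots, C_4) \neq 0$ and $*F^4/24 \neq 1$ becomes $C_1 C_2 C_3 C_4 \neq 1$; a direct computation gives $*F^3 = 6(C_2 C_3 C_4\, e^{01} + C_1 C_3 C_4\, e^{23} + C_1 C_2 C_4\, e^{45} + C_1 C_2 C_3\, e^{67})$.

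The assumption $\pi^2_7(*F^3) = 6 F_7$ implies $6F - *F^3 \in \Lambda^2_{21} W^*$ and that the $\Lambda^2_7$-component of $6F + *F^3$ equals $12 F_7$. Combining this with $\Lambda^2_7 \wedge \Lambda^2_7 \subset \Lambda^4_1 \oplus \Lambda^4_{27}$ from \eqref{eq:F2decomp}, the three conditions on $\gamma \in \Lambda^2_7 W^*$ reduce to $\la F_7, \gamma \ra = 0$, $\pi^4_7\bigl((6F - *F^3) \wedge \gamma\bigr) = 0$, and $\pi^4_{35}\bigl((6F_{21} + (*F^3)_{21}) \wedge \gamma\bigr) = 0$, the latter two pairing a $\Lambda^2_{21}$-form with $\gamma \in \Lambda^2_7$ and projecting. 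Writing $\gamma = 2\lambda^2(\eta)$ for $\eta = \sum_{\mu=1}^7 \eta_\mu e^\mu \in V^*$, the first condition reads $\alpha_1 \eta_1 = 0$, while the other two are evaluated by pairing with $2\lambda^4(e^\mu)$ for $\mu = 1, \ldots, 7$ using Lemma \ref{lem:lambdas}. Exactly as in the calculations \eqref{eq:T1'}--\eqref{eq:T7'} and \eqref{eq:S2'}--\eqref{eq:S7'} of Theorem \ref{thm:1+F Spin7 neq}, the $\Sp$-invariant splitting $V^* = \rmspan(e^1) \oplus \rmspan(e^2, e^3) \oplus \rmspan(e^4, e^5) \oplus \rmspan(e^6, e^7)$ is respected, so the resulting linear system in $\eta$ decouples into a single $\eta_1$-equation and three $2 \times 2$ blocks whose coefficients are explicit polynomials in $C_1, \ldots, C_4$ (verifiable with computer algebra, as done elsewhere in the paper).

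The final step is to show each block has trivial kernel under the standing hypotheses. For $\alpha_1 \neq 0$ the $\eta_1$-equation yields $\eta_1 = 0$ directly; for $\alpha_1 = 0$ the relation $\xi = 0$ specializes to $\sum_{i<j<k}\mu_i \mu_j \mu_k = 0$, and the $\eta_1$-equation coming from \eqref{eq:L2 on3 pt} or \eqref{eq:L2 on4 pt} must be inspected separately. For the three $2 \times 2$ blocks, the determinants are polynomial combinations of factors like $(C_i C_j - 1)(C_k C_l - 1)$ and $(C_i + C_j)(C_k + C_l)$ analogous to \eqref{eq:T2'}--\eqref{eq:T3'} and \eqref{eq:S2'}--\eqref{eq:S3'}, and by the case analysis following Lemma \ref{lem:nonzero}, their simultaneous vanishing would force either $C_1 C_2 C_3 C_4 = 1$ or all $C_i = 0$, both excluded by hypothesis, so $\gamma = 0$. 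The main technical obstacle is this last case analysis: because the coefficient polynomials arising from \eqref{eq:L2 on3 pt} and \eqref{eq:L2 on4 pt} differ in sign and normalization from those of $T_F$ and $S_F$, the relevant algebraic identities must be rederived from scratch, but no new structural ideas beyond those used in the proof of Theorem \ref{thm:1+F Spin7 neq} should be required.
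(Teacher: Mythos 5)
Your plan matches the paper's approach: reduce $F$ to the canonical diagonal form via Corollary \ref{cor:sol F1 F2}, write $\gamma = 2\lambda^2(\eta)$, translate the three conditions into explicit linear constraints on $\eta_1,\dots,\eta_7$, and show the coefficient polynomials cannot all vanish given $F\neq 0$ and $*F^4/24\neq 1$. However, you leave the crucial explicit computations undone, and your structural prediction by analogy with Theorem \ref{thm:1+F Spin7 neq} is off in a meaningful way: since $\gamma$ is \emph{not} twisted by $(I+F^\sharp)^*$ in this lemma, the constraints do not couple $(\eta_2,\eta_3)$, $(\eta_4,\eta_5)$, $(\eta_6,\eta_7)$ into genuine $2\times 2$ blocks as $T'_\mu, S'_\mu$ did, but decouple completely into scalar equations of the form $(C_i\pm C_j)(C_kC_l - 1)\,\eta_a = 0$ (see \eqref{eq:L2 on3 pt 1} and \eqref{eq:L2 on4 pt 2} in the paper). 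Similarly, the $\eta_1$-constraint coming from $\pi^4_7$ in \eqref{eq:L2 on3 pt} vanishes identically because the relation $\xi=0$ forces the diagonal coefficients of $6F-*F^3$ to sum to zero; the $\eta_1$-constraints that matter come from $\pi^4_1$ and from $\pi^4_{35}$, i.e.\ \eqref{eq:L2 on4 pt 1}. You do hedge on this point by noting the coefficients ``must be rederived from scratch,'' and the ensuing case analysis (simultaneous vanishing of all coefficients of a fixed $\eta_a$ forces either $C_1C_2C_3C_4=1$ or all $C_j=0$, both excluded) is indeed elementary once the equations are written out. So the plan would succeed, but as written it is an outline whose technical core --- computing $\Theta_1,\dots,\Theta_7$ for both self-dual and anti-self-dual projections and running the vanishing argument --- is exactly what remains to be done.
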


\begin{proof}
By Corollary \ref{cor:sol F1 F2}, 
we may assume that $F$ is of the form of the right hand side of \eqref{eq:sol F1 F2} 
satisfying \eqref{eq:solsol}. 
Set $C_j=\alpha_1 + \mu_j$. That is, 
$$
F=C_1 e^{01} + C_2 e^{23} + C_3 e^{45}  +C_4 e^{67}.  
$$
Note that we have the equation \eqref{eq:xi=0}. 
Set 
$$
\gamma = 2 \lambda^2(\eta) = e^0 \wedge \eta + i(\eta^\sharp) \varphi, 
$$
where $\eta = \sum_{j=1}^7 \eta_j e^j \in V^*$. 
Since  
$* \left( F \wedge \gamma \wedge \Phi \right) = 3 \la F, \gamma \ra$
and 
\begin{align*}
\la F, \gamma \ra 
&= C_1 \eta_1 + C_2 \varphi (\eta, e_2, e_3) 
+ C_3 \varphi (\eta, e_4, e_5) + C_4 \varphi (\eta, e_6, e_7) \\
&=
(C_1+C_2+C_3+C_4) \eta_1, 
\end{align*}
\eqref{eq:L2 on2 pt} is equivalent to 
\begin{align} \label{eq:L2 on2 pt 1}
(C_1+C_2+C_3+C_4) \eta_1 =0. 
\end{align}

Next, we rewrite \eqref{eq:L2 on3 pt} and \eqref{eq:L2 on4 pt}. 
Since $\pi^2_7 \left(F - * F^3/6 \right) = 0$ and \eqref{eq:F2decomp}, we see that 
$$
\pi^4_7 \left( \left( F - \frac{* F^3}{6} \right) \wedge \gamma \right) 
= \frac{1}{2} \left( \left( F - \frac{* F^3}{6} \right) + * \left( F - \frac{* F^3}{6} \right) \right). 
$$
Since $\Lambda^4_{35} W^*$ is the space of anti self dual 4-forms, we have 
$$
\pi^4_{35} \left( \left( F + \frac{* F^3}{6} \right) \wedge \gamma \right) 
= \frac{1}{2} \left( \left( F + \frac{* F^3}{6} \right) - * \left( F + \frac{* F^3}{6} \right) \right). 
$$
By a direct computation, we see that 
$$
F \wedge \gamma \pm *(F \wedge \gamma) = \sum_{j=1}^7 \eta_j \Theta_j, 
$$
where 
\begin{align*}
\Theta_1 
=&
\left( (C_1 + C_2) \pm (C_3+C_4) \right) \left(e^{0123} \pm e^{4567} \right) \\
&+
\left( (C_1 + C_3) \pm (C_2+C_4) \right) \left(e^{0145} \pm e^{2367} \right) \\
&+
\left( (C_1 + C_4) \pm (C_2+C_3) \right) \left(e^{0167} \pm e^{2345} \right), \\
\Theta_2 
=&
(C_1 \pm C_2) \left(e^{0146} -e^{0157} \pm e^{2346} \mp e^{2357} \right) \\
&+
(C_3 \pm C_4) \left(e^{0245} \pm e^{0267} - e^{1345} \mp e^{1367} \right), \\
\Theta_3 
=&
(C_1 \pm C_2) \left(-e^{0147} -e^{0156} \mp e^{2347} \mp e^{2356} \right) \\
&+
(C_3 \pm C_4) \left(e^{0345} \pm e^{0367} + e^{1245} \pm e^{1267} \right), \\
\Theta_4 
=&
(C_1 \pm C_3) \left(-e^{0126} + e^{0137} \mp e^{2456} \pm e^{3457} \right) \\
&+
(C_2 \pm C_4) \left(e^{0234} \pm e^{0467} - e^{1235} \mp e^{1567} \right), \\
\Theta_5 
=&
(C_1 \pm C_3) \left(e^{0127} + e^{0136} \pm e^{2457} \pm e^{3456} \right) \\
&+
(C_2 \pm C_4) \left(e^{0235} \pm e^{0567} + e^{1234} \pm e^{1467} \right), \\
\Theta_6 
=&
(C_1 \pm C_4) \left(e^{0124} - e^{0135} \pm e^{2467} \mp e^{3567} \right) \\
&+
(C_2 \pm C_3) \left(e^{0236} \pm e^{0456} - e^{1237} \mp e^{1457} \right), \\
\Theta_7
=&
(C_1 \pm C_4) \left(-e^{0125} - e^{0134} \mp e^{2567} \mp e^{3467} \right) \\
&+
(C_2 \pm C_3) \left(e^{0237} \pm e^{0457} + e^{1236} \pm e^{1456} \right). 
\end{align*}
Since 
$$
\frac{* F^3}{6} = C_2 C_3 C_4 e^{01} + C_1 C_3 C_4 e^{23} + C_1 C_2 C_4 e^{45} + C_1 C_2 C_3 e^{67}, 
$$
\eqref{eq:L2 on3 pt} is equivalent to 
\begin{align} \label{eq:L2 on3 pt 1}
\begin{split}
(C_1+C_2) (C_3 C_4 -1) \eta_a &=(C_3+C_4) (C_1 C_2 -1) \eta_a=0, \\
(C_1+C_3) (C_2 C_4 -1) \eta_b &=(C_2+C_4) (C_1 C_3 -1) \eta_b=0, \\
(C_1+C_4) (C_2 C_3 -1) \eta_c &=(C_2+C_3) (C_1 C_4 -1) \eta_c=0
\end{split}
\end{align}
for $a=2,3, b=4,5$ and $c=6,7$. 
Similarly, \eqref{eq:L2 on4 pt} is equivalent to 
\begin{align} \label{eq:L2 on4 pt 1}
\begin{split}
\left \{ (C_1+C_2) (C_3 C_4+1) - (C_3+C_4) (C_1 C_2 + 1) \right \} \eta_1 &=0, \\
\left \{ (C_1+C_3) (C_2 C_4+1) - (C_2+C_4) (C_1 C_3 + 1) \right \} \eta_1 &=0, \\
\left \{ (C_1+C_4) (C_2 C_3+1) - (C_2+C_3) (C_1 C_4 + 1) \right \} \eta_1 &=0, 
\end{split}
\end{align}
\begin{align} \label{eq:L2 on4 pt 2}
\begin{split}
(C_1-C_2) (C_3 C_4 -1) \eta_a &=(C_3-C_4) (C_1 C_2 -1) \eta_a=0, \\
(C_1-C_3) (C_2 C_4 -1) \eta_b &=(C_2-C_4) (C_1 C_3 -1) \eta_b=0, \\
(C_1-C_4) (C_2 C_3 -1) \eta_c &=(C_2-C_3) (C_1 C_4 -1) \eta_c=0
\end{split}
\end{align}
for $a=2,3, b=4,5$ and $c=6,7$. 
It is straightforward to see that 
all the coefficients of $\eta_1$ in \eqref{eq:L2 on2 pt 1} and \eqref{eq:L2 on4 pt 1}  
vanish if and only if $C_1=C_2=C_3=C_4=0$, 
which violates the assumption $F \neq 0$. 
Hence, we obtain $\eta_1=0$. 

It is also straightforward to see that 
all the coefficients of $\eta_2$ and $\eta_3$ in \eqref{eq:L2 on3 pt 1} and \eqref{eq:L2 on4 pt 2} 
vanish if and only if $C_1=C_2=C_3=C_4=0$ or 
$C_1 C_2= C_3 C_4 =1$. 
Since $*F^4/24 = C_1 C_2 C_3 C_4$, both cases are contrary to the assumptions. 
Hence, we obtain $\eta_2=\eta_3=0$. 

Similarly, we obtain $\eta_4=\eta_5=\eta_6=\eta_7=0$ 
and the proof is completed. 
\end{proof}

\section{Notation} \label{app:notation}
We summarize the notation used in this paper. 
We use the following for a manifold $X$ with a $G_2$- or $\Sp$-structure. 
Denote by $g$ the associated Riemannian metric.

\vspace{0.5cm}
\begin{center}
\begin{tabular}{|lc|l|}
\hline
Notation                        & & Meaning \\ \hline \hline
 $i(\,\cdot\,)$                &  & The interior product \\
$\Gamma(X, E)$            & &  The space of all smooth sections of  a vector bundle $E \rightarrow X$\\
$\Om^k$                       & & $\Om^k = \Om^k (X) = \Gamma
(X, \Lambda^k T^*X)$ \\
$S^2 T^*X$                   & & The space of symmetric (0,2)-tensors on $X$ \\
$S^2_0 T^*X$                & & The space of traceless symmetric (0,2)-tensors on $X$ \\
$\Ss^2$                        & & $\Ss^2= \Gamma (X, S^2 T^*X)$ \\
$\Ss^2_0$                     & & $\Ss^2_0= \Gamma (X, S^2_0 T^*X)$ \\
$b^k$                            & & the $k$-th Betti number  \\
$H^k_{dR}$                     & & the $k$-th de Rham cohomology \\
$H^k (\#)$                      & & the $k$-th cohomology of a complex $(\#)$ \\
$Z^1$                            & &  the space of closed 1-forms \\
$v^{\flat} \in T^* X$       & & $v^{\flat} = g(v, \,\cdot\,)$ for $v \in TX$ \\ 
$\alpha^{\sharp} \in TX$ & & $\alpha = g(\alpha^{\sharp}, \,\cdot\,)$ for $\alpha \in T^* X$ \\
$\vol$                           & & The volume form induced from $g$ \\
$\Lambda^k_\l T^*X$      & & The subspace of $\Lambda^k T^*X$ corresponding to  \\
                                  & & an $\l$-dimensional irreducible subrepresentation  \\
$\Om^k_\l$                    & &  $\Om^k_\l = \Gamma (X, \Lambda^k_\l T^*X)$ \\
$\pi^k_\l$                       & & The projection $\Lambda^k T^*X \rightarrow \Lambda^k_\l T^*X$ or 
                                           $\Om^k \rightarrow \Om^k_\l$ \\
\hline
\end{tabular}
\end{center}

\vspace{0.5cm}
\noindent{{\bf Acknowledgements}}: 
The authors would like to thank an anonymous referee for the careful reading 
of an earlier version of this paper and useful comments on it.

\end{document}